\newcounter{theorem}
\newtheorem{theo}[theorem]{Theorem}%
\newtheorem{defi}[theorem]{Definition}%
\newtheorem{lemm}[theorem]{Lemma}%
\newtheorem{coro}[theorem]{Corollary}%
\newtheorem{rema}[theorem]{Remark}%
\def\eef{{\bf E}}
\def\rrf{{\bf R}}
\def\ec{{\cal E}}
\def\mc{{\cal M}}
\def\rc{{\cal R}}
\def\one{{\bf 1}}
\def\d{{\rm d}}
\def\fdc{]\hskip-1mm]}
\def\odc{[\hskip-1mm[}
\def \ep{\varepsilon}
\def \diag{{\rm Diag}}
\def \diam{{\rm diam}}
\def \supp{{\rm Supp}}
\newcommand{\argmin}{\mathop{\arg\,\min}\limits}
\begin{document}
\title{Iterated proportional fitting procedure
and infinite products of stochastic matrices}
\author{Jean Brossard and Christophe Leuridan}
\date{\today}
\maketitle

\begin{abstract}
The iterative proportional fitting procedure (IPFP), introduced 
in 1937 by Krui\-thof, aims to adjust the elements of an array 
to satisfy specified row and column sums. Thus, given a 
rectangular non-negative matrix $X_0$ and two positive marginals 
$a$ and $b$, the algorithm generates a sequence of matrices 
$(X_n)_{n \ge 0}$ starting at $X_0$, supposed to converge to a 
biproportional fitting, that is, to a matrix $Y$ whose marginals 
are $a$ and $b$ and of the form $Y=D_1X_0D_2$, for some diagonal 
matrices $D_1$ and $D_2$ with positive diagonal entries. 
 
When a biproportional fitting does exist, it is unique and the  
sequence $(X_n)_{n \ge 0}$ converges to it at an at least geometric rate. 
More generally, when there exists some matrix with marginal $a$ and $b$ 
and with support included in the support of $X_0$, the  
sequence $(X_n)_{n \ge 0}$ converges to the unique matrix whose marginals 
are $a$ and $b$ and which can be written as a limit of matrices 
of the form $D_1X_0D_2$. 

In the opposite case (when there exists no matrix with marginals 
$a$ and $b$ whose support is included in the support of $X_0$), 
the sequence $(X_n)_{n \ge 0}$ diverges but both subsequences 
$(X_{2n})_{n \ge 0}$ and $(X_{2n+1})_{n \ge 0}$ converge. 

In the present paper, we use a new method to prove again these 
results and determine the two limit-points in the case of 
divergence. Our proof relies on a new convergence theorem for backward infinite 
products $\cdots M_2M_1$ of stochatic matrices $M_n$, with diagonal entries 
$M_n(i,i)$ bounded away from $0$ and with bounded ratios 
$M_n(j,i)/M_n(i,j)$. This theorem generalizes Lorenz' stabilization theorem. 

We also provide an alternative proof of Touric and Nedi\'c's theorem 
on backward infinite products of doubly-stochatic matrices, 
with diagonal entries bounded away from~$0$. 
In both situations, we improve slightly the conclusion, 
since we establish not only the convergence of the sequence 
$(M_n \cdots M_1)_{n \ge 0}$, but also its finite variation. 
\end{abstract}

\noindent
Keywords: infinite products of stochastic matrices - 
contingency matrices - distri\-butions with given marginals - 
iterative proportional fitting - relative entropy - 
I-divergence. \\
MSC Classification: 15B51 
- 62H17 
- 62B10 
- 68W40.

\section{Introduction}

\subsection{The iterative proportional fitting procedure}

The aim of the iterative proportional fitting procedure is to find
a non-negative matrix with given row and columns sums and having zero
entries at some given places.  
Fix two integers $p \ge 2$, $q \ge 2$ (namely the sizes of the 
matrices to be considered) and two vectors 
$a=(a_1,\ldots,a_p)$, $b=(b_1,\ldots,b_q)$ with positive components
such that $a_1+\cdots+a_p = b_1+\cdots+b_q = 1$ (namely the target 
marginals). We assume that the common value of the sums 
$a_1+\cdots+a_p$ and $b_1+\cdots+b_q$ is $1$ for convenience only, 
to enable probabilistic interpretations, but this is not a true 
restriction.

We introduce the following notations for any $p \times q$ real matric $X$: 
$$X(i,+) = \sum_{j=1}^q X(i,j), \quad X(+,j) = \sum_{i=1}^p X(i,j),
\quad X(+,+) = \sum_{i=1}^p\sum_{j=1}^q X(i,j),$$
and we set $R_i(X) = X(i,+)/a_i$, $C_j(X) = X(+,j)/b_j$. 

The IPFP has been introduced in 1937 by Kruithof~\cite{Kruithof} 
(third appendix) to 
estimate telephone traffic between central stations. 
This procedure starts from a $p \times q$ non-negative matrix $X_0$ 
such that the sum of the entries on each row or column is positive 
(so $X_0$ has at least one positive entry on each row or column) 
and works as follows. 
\begin{itemize}
\item For each $i \in \odc 1,p \fdc$, divide the row $i$ of $X_0$ by  
the positive number $R_i(X_0)$. 
This yields a matrix $X_1$ satisfying the same assumptions as $X_0$ 
and having the desired row-marginals. 
\item For each $j \in \odc 1,q \fdc$, divide the column $j$ of $X_1$ by  
the positive number $C_j(X_1)$. 
This yields a matrix $X_2$ satisfying the same assumptions as $X_0$ 
and having the desired column-marginals. 
\item Repeat the operations above starting from $X_2$ to get $X_3$, $X_4$, 
and so on. 
\end{itemize}
Denote by $\mc_{p,q}(\rrf_+)$ the set of all $p \times q$ matrices with 
non-negative entries, and consider the following subsets: 
\begin{eqnarray*}
\Gamma_0 &:=& \{X \in \mc_{p,q}(\rrf_+) : 
\forall i \in \odc 1,p \fdc,~ X(i,+) > 0,~
\forall j \in \odc 1,q \fdc,~X(+,j) > 0 \}, \\
\Gamma_1 &:=& \{X \in \Gamma_0 : X(+,+) = 1 \} \\
\Gamma_R  &:=& \Gamma(a,*) 
= \{X \in \Gamma_0 : \forall i \in \odc 1,p \fdc,~X(i,+) = a_i \}, \\ 
\Gamma_C &:=& \Gamma(*,b) 
= \{X \in \Gamma_0 : \forall j \in \odc 1,q \fdc,~X(+,j) = b_j \},\\
\Gamma &:=& \Gamma(a,b) = \Gamma_R \cap \Gamma_C. 
\end{eqnarray*}
For every integer $m \ge 1$, denote by $\Delta_m$ the set of all $m \times m$ 
diagonal matrices with positive diagonal entries. 

The IPFP consists in applying 
alternatively the transformations $T_R : \Gamma_0 \to \Gamma_R$ and 
$T_C : \Gamma_0 \to \Gamma_C$ defined by 
$$T_R(X)(i,j) = R_i(X)^{-1}X(i,j) \text { and } 
T_C(X)(i,j) = C_j(X)^{-1}X(i,j).$$ 
The homogeneity of the map $T_R$ shows that replacing $X_0$ 
with $X_0(+,+)^{-1}X_0$ does not change the matrices $X_n$ 
for $n \ge 1$, so there is no restriction to assume that $X_0 \in \Gamma_1$.  

Note that $\Gamma_R$ and $\Gamma_C$ are subsets of $\Gamma_1$ 
and are closed subsets of $\mc_{p,q}(\rrf_+)$. Therefore, if 
$(X_n)_{n \ge 0}$ converges, its limit belongs to the set $\Gamma$. 
Furthermore, by construction, the matrices 
$X_n$ belong to the set 
\begin{eqnarray*}\Delta_pX_0\Delta_q
&=& \{D_1X_0D_2 : D_1 \in \Delta_p, D_2  \in \Delta_q\} \\
&=& \{(\alpha_iX_0(i,j)\beta_j) : 
(\alpha_1,\ldots,\alpha_p) \in (\rrf_+^*)^p,
~(\beta_1,\ldots,\beta_q) \in (\rrf_+^*)^q \}.
\end{eqnarray*}
According to the terminology used by Pretzel, we will say that 
$X_0$ and $X_n$ are diagonally equivalent.
In particular, the matrices $X_n$ have by construction the same support, 
where the support of a matrix $X \in \mc_{p,q}(\rrf_+)$ is defined by
$$\supp(X) = \{(i,j) \in \odc 1,p \fdc \times \odc 1,q \fdc : X(i,j) > 0\}.$$ 
Therefore, for every limit point $L$ of $(X_n)_{n \ge 0}$, we have 
$\supp(L) \subset \supp(X_0)$ and this inclusion may be strict. 
In particular, if $(X_n)_{n \ge 0}$ converges, its limit belongs to the set 
$$\Gamma(X_0) := \Gamma(a,b,X_0) 
= \{S \in \Gamma : \supp(S) \subset \supp(X_0)\}.$$ 

When the set $\Gamma(X_0)$ is empty, the sequence $(X_n)_{n \ge 0}$ cannot 
converge, and no precise behavior was established until 2013, when Gietl and 
Reffel showed that both subsequences $(X_{2n})_{n \ge 0}$ and $(X_{2n+1})_{n \ge 0}$
converge~\cite{Gietl - Reffel}.

In the opposite case, namely when $\Gamma(a,b)$ contains 
some matrix with support included in $X_0$, various proofs 
of the convergence of $(X_n)_{n \ge 0}$ are known
(Bacharach~\cite{Bacharach} in 1965, 
Bregman~\cite{Bregman} in 1967, Sinkhorn~\cite{Sinkhorn} in 1967, 
Csisz\'ar~\cite{Csiszar} in 1975, Pretzel~\cite{Pretzel} in 1980 and others 
(see~\cite{Brown - Chase - Pittenger} and \cite{Pukelsheim} to 
get an exhaustive review). Moreover, the limit can be 
described using some probabilistic tools that we introduce now.
 
\subsection{Probabilistic interpretations and tools}

At many places, we shall identify $a$, $b$ and 
matrices $X$ in $\Gamma_1$ with the probability measures on 
$\odc 1,p \fdc$, $\odc 1,q \fdc$ and $\odc 1,p \fdc \times \odc 1,q \fdc$ 
given by $a(\{i\})=a_i$, $b(\{j\})=b_j$ and $X(\{(i,j)\} = X(i,j)$. 
Through this identification, the set $\Gamma_1$ can be seen 
as the set of all probability measures on 
$\odc 1,p \fdc \times \odc 1,q \fdc$ whose marginals charge every 
point; the set $\Gamma$ can be seen as the set of all probability measures
on $\odc 1,p \fdc \times \odc 1,q \fdc$ having marginals $a$ and $b$.
This set is non-empty since it contains the probability $a \otimes b$.

The $I$-divergence, or Kullback-Leibler divergence, also called 
relative entropy, plays a key role in the study of the iterative 
proportional fitting algorithm. For every $X$ and $Y$ in $\Gamma_1$, 
the relative entropy of $Y$ with respect to $X$ is 
$$D(Y||X) = \sum_{(i,j) \in \supp(X)} Y(i,j) \ln\frac{Y(i,j)}{X(i,j)} 
\text{ if } \supp(Y) \subset \supp(X),$$
and $D(Y||X) = +\infty$ otherwise, with the convention $0 \ln 0 = 0$. 
Although $D$ is not a distance, the quantity $D(Y||X)$ measures in 
some sense how much $Y$ is far from $X$ since $D(Y||X) \ge 0$, 
with equality if and only if $Y=X$. 

In 1968, Ireland and Kullback~\cite{Ireland - Kullback} gave an 
incomplete proof of the convergence of $(X_n)_{n \ge 0}$ when $X_0$ 
is positive, relying on the properties of the $I$-divergence.   
Yet, the $I$-divergence can be used to prove the convergence 
when the set $\Gamma(X_0)$ is non-empty, and to determine the limit. 
The maps $T_R$ and $T_C$ can be viewed has $I$-projections on 
$\Gamma_R$ and $\Gamma_C$ in the sense that for every $X \in \Gamma_0$, 
$T_R(X)$ (respectively $T_C(X)$) is the only matrix achieving the least upper 
bound of $D(Y||X)$ over all $Y$ in $\Gamma_R$ (respectively $\Gamma_C$). 

In 1975, Csisz\'ar established (theorem 3.2 in \cite{Csiszar}) that, 
given a finite collection of linear sets $\ec_1,\ldots,\ec_k$ of 
probability distributions on a finite set, and a distribution $R$ such that 
$\ec_1 \cap \ldots \cap \ec_k$ contains some probability distribution 
which is absolutely continuous with regard to $R$, 
the sequence obtained from $R$ 
by applying cyclically the $I$-projections on $\ec_1,\ldots,\ec_k$ 
converges to the $I$-projection of $R$ on $\ec_1 \cap \ldots \cap \ec_k$. 
This result applies to our context (the finite set is 
$\odc 1,p \fdc \times \odc 1,q \fdc$, the linear sets 
$\ec_1$ and $\ec_2$ are $\Gamma_R$ and $\Gamma_C$) and 
shows that if the set $\Gamma = \Gamma_R \cap \Gamma_C$ 
contains some matrix with support included in $X_0$, 
then $(X_n)_{n \ge 0}$ converges to the $I$-projection of 
$X_0$ on $\Gamma$. 

\section{Old and new results}

Since the behavior of the sequence $(X_n)_{n \ge 0}$ depends only on 
the existence or the non-existence of elements of $\Gamma$ with 
support equal to or included in $\supp(X_0)$, we state a criterion which
determines in which case we are. 

Consider two subsets $A$ of $\odc 1,p \fdc$ and $B$ of $\odc 1,q \fdc$ 
such that $X_0$ is null on $A \times B$. 
Note $A^c = \odc 1,p \fdc \setminus A$ and $B^c = \odc 1,q \fdc \setminus B$. 
Then for every $S \in \Gamma(X_0)$, 
$$a(A) = \sum_{i \in A} a_i = \sum_{(i,j) \in A \times B^c} S(i,j) \le 
\sum_{(i,j) \in [1,p] \times B^c} S(i,j) = \sum_{j \in B^c} b_j = b(B^c).$$
If $a(A)=b(B^c)$, $S$ must be null on $A^c \times B^c$. 
If $a(A)>b(B^c)$, we get a contradiction, so $\Gamma(X_0)$ is empty.

Actually, these causes of the non-existence of elements of $\Gamma$ 
with support equal to or included in $\supp(X_0)$ provide necessary and 
sufficient conditions. We give two criteria, the first one was 
already stated by Bacharach~\cite{Bacharach}. Pukelsheim gave 
a different formulation of these conditions in theorems 2 and 3 
of~\cite{Pukelsheim}. We use a different method, 
relying on the theory of linear system of inequalities, 
and give a more precise statement, namely item 3 of 
critical case below. 

\begin{theo}\label{criteria} (Criteria to distinguish the cases)

\begin{enumerate}
\item (Case of incompatibility) 
The set $\Gamma(X_0)$ is empty if and only if there exist two subsets 
$A \subset \odc 1,p \fdc$ and $B \subset \odc 1,q \fdc$ 
such that $X_0$ is null on $A \times B$ and $a(A)>b(B^c)$. 
\item (Critical case) Assume now that $\Gamma(X_0)$ is not empty. Then 
\begin{enumerate}
\item There exists a matrix $S_0 \in \Gamma(X_0)$ whose support contains 
the support of every matrix in $\Gamma(X_0)$. 
\item The support of $S_0$ is strictly contained in $\supp(X_0)$ 
if and only if there exist two non-empty subsets $A$ of $\odc 1,p \fdc$ 
and $B$ of $\odc 1,q \fdc$ such that $X_0$ is null on $A \times B$ and 
$a(A)=b(B^c)$. 
\item More precisely, the support of $S_0$ is the complement in $\supp(X_0)$ 
of the union of all products $A^c \times B^c$ over all non-empty subsets 
$A \times B$ of $\odc 1,p \fdc \times \odc 1,q \fdc$ such that $X_0$ is 
null on $A \times B$ and $a(A)=b(B^c)$. 
\end{enumerate}
\end{enumerate}
\end{theo}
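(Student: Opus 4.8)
The plan is to set up a linear-programming / polyhedral framework for the
set $\Gamma(X_0)$ and to read off all three items from duality and from the
structure of the face of the transportation polytope cut out by the support
constraints. First I would observe that $\Gamma(X_0)$ is the set of
solutions of the (feasible or infeasible) linear system
$X(i,+)=a_i$, $X(+,j)=b_j$, $X(i,j)\ge 0$, together with $X(i,j)=0$ for
$(i,j)\notin\supp(X_0)$; equivalently, it is the transportation polytope with
cost-support restricted to $\supp(X_0)$. The natural tool for item~1 is a
theorem of the alternative (Gale / Farkas for the transportation problem, or
equivalently the max-flow--min-cut theorem applied to the bipartite graph
with vertex classes $\odc 1,p\fdc$ and $\odc 1,q\fdc$ and edges $\supp(X_0)$):
$\Gamma(X_0)=\emptyset$ iff there is a ``deficient set'', and a short
computation identifies such a set with a pair $(A,B)$, $X_0$ null on
$A\times B$, $a(A)>b(B^c)$. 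The forward implication was already shown in the
text preceding the statement; the converse is exactly the Hall/Gale
condition.

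For item~2(a), assuming $\Gamma(X_0)\neq\emptyset$, I would use convexity:
$\Gamma(X_0)$ is a polytope, so the relative interior is non-empty, pick any
$S_0$ in it; for a polytope all relative-interior points have the same,
maximal support, and any other point's support is contained in it. (One can
also build $S_0$ concretely as a strictly positive convex combination of the
vertices of $\Gamma(X_0)$, or simply take $S_0=\tfrac12(S+S')$ iterated over
all $S\in\Gamma(X_0)$ since there are finitely many supports.) This gives the
distinguished matrix whose support dominates every support appearing in
$\Gamma(X_0)$.

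Items 2(b) and 2(c) are the substantive part, and this is where I expect the
real work to lie. The claim is that a pair $(i,j)\in\supp(X_0)$ fails to lie
in $\supp(S_0)$ precisely when $(i,j)\in A^c\times B^c$ for some non-empty
$(A,B)$ with $X_0$ null on $A\times B$ and $a(A)=b(B^c)$. The ``only if''
direction I would obtain from complementary slackness / the equality case of
the inequality $a(A)\le b(B^c)$ already displayed: if $S_0(i,j)=0$ for some
in-support entry, I want to produce a splitting $(A,B)$ that is ``tight''. The
clean way is to pass to the bipartite graph $G$ on $\supp(S_0)$: since $S_0$
has maximal support, $\Gamma(X_0)$ lies in the affine hull determined by $G$,
and the missing edges $(i,j)\in\supp(X_0)\setminus\supp(S_0)$ are exactly
those whose inclusion is blocked, which by LP duality forces a tight cut,
i.e.\ a pair $(A,B)$ with $a(A)=b(B^c)$, $X_0$ null on $A\times B$, and
$(i,j)\in A^c\times B^c$; conversely, if such a tight pair exists, the
displayed computation shows every $S\in\Gamma(X_0)$ is null on $A^c\times
B^c$, hence so is $S_0$, proving ``if''. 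Item 2(b) is then the special case
where the union of these $A^c\times B^c$ is non-empty. The main obstacle is
organizing the duality argument cleanly: one must rule out degenerate tight
pairs (e.g. $A=\odc1,p\fdc$ forces $b(B^c)=1$, so $B=\emptyset$, consistent
with the non-emptiness requirement) and must make sure the cut produced by
duality can be chosen with $X_0$ — not merely $S_0$ — null on $A\times B$;
this is automatic because the LP only ``sees'' edges of $\supp(X_0)$, so a
separating cut avoids all of them. I would present this via the max-flow /
min-cut picture, which makes the identification of the extremal sets $(A,B)$
transparent and handles 2(c) by taking, over all min-cuts, the union of the
corresponding rectangles.
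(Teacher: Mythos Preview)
Your plan and the paper's proof share the same backbone: both reduce item~1 and the hard direction of items~2(b)--(c) to a theorem of the alternative for the linear system defining $\Gamma(X_0)$. The paper invokes Farkas' lemma directly (Webster, theorems~4.2.3 and~4.2.7) and then extracts the sets $A,B$ from the dual certificate $(\alpha_i,\beta_j,\gamma_{i,j})$ by a nice probabilistic device: it views $(\alpha_i)$ and $(-\beta_j)$ as the values of random variables $U,V$ with laws $a$ and $b$, writes $\sum_i\alpha_i a_i+\sum_j\beta_j b_j=\eef[U]-\eef[V]$ as an integral of $P[U>t]-P[V>t]$, and chooses a threshold $t$ at which this integrand has the desired sign to define $A=\{i:\alpha_i\le t\}$, $B=\{j:\beta_j<-t\}$. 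Your max-flow/min-cut route is a legitimate alternative that would likely make item~1 shorter (it is exactly the Gale--Hoffman feasibility condition for transportation), while the paper's thresholding trick is what lets it work uniformly with the raw Farkas output.

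For item~2(a) your argument via the relative interior of the polytope is the same as the paper's one-line appeal to convexity.

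There is, however, a genuine gap in your treatment of~2(c). You assert that LP duality ``forces a tight cut \ldots\ and $(i,j)\in A^c\times B^c$'', and that this is automatic from the fact that the LP only sees edges of $\supp(X_0)$. That last remark correctly guarantees $X_0$ is null on $A\times B$, but it does \emph{not} guarantee that the fixed missing entry $(i_0,j_0)$ lands in $A^c\times B^c$: a priori it could fall in $A\times B^c$ or $A^c\times B$. The paper runs into exactly this obstruction and resolves it by a recursive refinement: once a tight pair $(A,B)$ is found, the system decouples into independent subsystems on the blocks $A\times B^c$ and $A^c\times B$; if $(i_0,j_0)$ lies in one of these blocks, one reapplies the argument there, and since the blocks shrink strictly the process terminates with a tight pair having $(i_0,j_0)\in A'^c\times B'^c$. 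Your min-cut picture does not sidestep this: a min cut in the augmented network certifying that the edge $(i_0,j_0)$ cannot carry flow need not place $i_0$ and $j_0$ on the same side in the way you need. You should either supply the recursion (which works equally well in your framework) or give an explicit argument that among all tight cuts one can always choose one with $(i_0,j_0)\in A^c\times B^c$.
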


Note that the assumption that $X_0$ has at least a positive entry on each 
row or column prevents $A$ and $B$ from being full when $X_0$ is null 
on $A \times B$. The additional condition $a(A)>b(B^c)$ (respectively  
$a(A)=b(B^c)$) is equivalent to the condition $a(A)+b(B)>1$ 
(respectively $a(A)+b(B)=1$), so rows and column play a symmetric role. 

The condition $a(A)>b(B^c)$ and the positivity of all components of 
$a$ and $b$ also prevent $A$ and $B$ from being empty. 
We will call {\it cause of incompatibility} any (non-empty) 
block $A \times B \subset \odc 1,p \fdc \times \odc 1,q \fdc$ 
such that $X_0$ is null on $A \times B$ and $a(A)>b(B^c)$. 
If the set $\Gamma(X_0)$ is non-empty, we will call 
{\it cause of criticality} any non-empty 
block $A \times B \subset \odc 1,p \fdc \times \odc 1,q \fdc$ 
such that $X_0$ is null on $A \times B$ and $a(A)=b(B^c)$. 
 

Given a convergent sequence $(x_n)_{n \ge 0}$ of vectors 
in some normed vector space $(E,||\cdot||)$, with limit $\ell \in E$, 
we will say that {\it the rate of convergence is geometric} 
(respectively {\it at least geometric}) 
if $0 < \lim_n ||x_n-x_\infty||^{1/n} < 1$
(respectively $\limsup_n ||x_n-x_\infty||^{1/n} < 1$).
  
We now describe the asymptotic behavior of sequence $(X_n)_{n \ge 0}$
in each case. The first case is already well-known. 

\begin{theo}\label{fast convergence}(Case of fast convergence)
Assume that $\Gamma$ contains some matrix with same support as $X_0$. 
Then 
\begin{enumerate}
\item The sequences $(R_i(X_{2n})_{n \ge 0})$ and $(C_j(X_{2n+1})_{n \ge 0})$
converge to $1$ at an at least geometric rate.  
\item The sequence $(X_n)_{n \ge 0}$ converges to some matrix $X_\infty$ 
which has the same support as $X_0$. The rate of convergence 
is at least geometric.  
\item The limit $X_\infty$ is the only matrix in 
$\Gamma\cap \Delta_pX_0\Delta_q$ (in particular $X_0$ and $X_\infty$ 
are diagonally equivalent). 
It is also the unique matrix achieving the minimum
of $D(Y||X_0)$ over all $Y \in \Gamma(X_0)$. 
\end{enumerate}
\end{theo}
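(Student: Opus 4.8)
The plan is to use the relative entropy as a Lyapunov functional, exploiting that $T_R$ and $T_C$ are $I$-projections onto the linear sets $\Gamma_R$ and $\Gamma_C$, as recalled above. Using the hypothesis, fix a matrix $S\in\Gamma$ with $\supp(S)=\supp(X_0)$; then $\supp(S)=\supp(X_n)$ for all $n$, so every $D(S\|X_n)$ is finite, and since $S$ lies in both $\Gamma_R$ and $\Gamma_C$ the Pythagorean identity for $I$-projections gives $D(S\|X_n)=D(S\|X_{n+1})+D(X_{n+1}\|X_n)$ for all $n$. Hence $(D(S\|X_n))_n$ decreases to a limit and $D(X_{n+1}\|X_n)\to 0$. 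A direct computation gives $D(X_{2n+1}\|X_{2n})=-\sum_i a_i\ln R_i(X_{2n})$ with $\sum_i a_iR_i(X_{2n})=X_{2n}(+,+)=1$, so strict concavity of the logarithm forces $R_i(X_{2n})\to 1$ for every $i$, and symmetrically $C_j(X_{2n+1})\to 1$; this is item~1, the rate being postponed. Writing $D(S\|X_n)=\sum_{(i,j)\in\supp(X_0)}S(i,j)\ln(S(i,j)/X_n(i,j))$ and noting that each summand is bounded below (entries are $\le 1$), boundedness of the whole sum prevents any entry $X_n(i,j)$ with $(i,j)\in\supp(X_0)$ from going to $0$; together with $R_i(X_{2n}),C_j(X_{2n+1})\to1$ this shows that every limit point $L$ of $(X_n)$ satisfies $\supp(L)=\supp(X_0)$ and $L\in\Gamma$.

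For item~2 I would pick such a limit point $L$, along $X_{n_k}\to L$; applying the Pythagorean identity with $S$ replaced by $L\in\Gamma$ shows $(D(L\|X_n))_n$ converges, and its value along $(n_k)$ tends to $D(L\|L)=0$, so $D(L\|X_n)\to 0$. Since $X\mapsto D(L\|X)$ is continuous and stays bounded away from $0$ off any neighbourhood of $L$ on the compact set containing the $X_n$, this forces $X_n\to L=:X_\infty$. That $X_\infty\in\Delta_pX_0\Delta_q$ follows by writing $X_n=D_1^{(n)}X_0D_2^{(n)}$, normalising, and propagating convergence of the entry ratios along a spanning forest of the bipartite graph of $\supp(X_0)$, so that the diagonal entries of $D_1^{(n)},D_2^{(n)}$ converge to positive limits. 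Uniqueness in $\Gamma\cap\Delta_pX_0\Delta_q$ is the classical trick: if $Y'=E_1YE_2$ with $Y,Y'$ both in $\Gamma\cap\Delta_pX_0\Delta_q$, then $D(Y\|Y')=-\sum_i a_i\ln E_1(i,i)-\sum_j b_j\ln E_2(j,j)=-D(Y'\|Y)$, and as both are $\ge 0$ they vanish, so $Y=Y'$; this gives the first part of item~3. For the $I$-projection description, I would sum $D(Y\|X_n)=D(Y\|X_{n+1})+D(X_{n+1}\|X_n)$ over $n$ for an arbitrary $Y\in\Gamma(X_0)$ and let $n\to\infty$, obtaining $D(Y\|X_0)=D(Y\|X_\infty)+\sum_{k\ge0}D(X_{k+1}\|X_k)=D(Y\|X_\infty)+D(X_\infty\|X_0)$ (the last equality from the case $Y=X_\infty$), so $D(Y\|X_0)\ge D(X_\infty\|X_0)$ with equality iff $Y=X_\infty$.

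The hard part is the at least geometric rate. Set $d_n:=D(X_\infty\|X_n)$, so $d_n\downarrow 0$ and $d_n-d_{n+1}=D(X_{n+1}\|X_n)$. I would first establish a local error bound on $\Delta_pX_0\Delta_q$: its tangent space at $X_\infty$ consists of the matrices $v$ with $v(i,j)=(\xi_i+\eta_j)X_\infty(i,j)$ on $\supp(X_0)$ and $v=0$ elsewhere, and if such a $v$ has vanishing row and column sums then $0=\sum_i\xi_iv(i,+)+\sum_j\eta_jv(+,j)=\sum_{(i,j)\in\supp(X_0)}(\xi_i+\eta_j)^2X_\infty(i,j)$, forcing $\xi_i+\eta_j=0$ on $\supp(X_0)$ and hence $v=0$; so the marginal map $X\mapsto((X(i,+))_i,(X(+,j))_j)$ is an immersion at $X_\infty$, whose only preimage of $(a,b)$ is $X_\infty$ by the uniqueness above, which yields $\|\text{marginal error of }X\|\ge c\,\|X-X_\infty\|$ for $X\in\Delta_pX_0\Delta_q$ near $X_\infty$. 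For even $n$ one has $X_n\in\Gamma_C$, so its marginal error reduces to the row-marginal error, which is comparable to $\big(-\sum_i a_i\ln R_i(X_n)\big)^{1/2}=(d_n-d_{n+1})^{1/2}$ once $X_n$ is close to $X_\infty$; combining this with the error bound and with $d_n\le C\|X_n-X_\infty\|^2$ (Taylor expansion of $D$ at its minimum) gives $d_n-d_{n+1}\ge c'd_n$, i.e. $d_{n+1}\le(1-c')d_n$, and the same holds for odd $n$ by symmetry. Hence $d_n$ decays geometrically, and Pinsker's inequality $\|X_n-X_\infty\|^2\le 2d_n$ together with the comparisons above delivers the at least geometric rate in items~1 and~2. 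The main obstacle is precisely this quantitative step; an alternative is to linearise $T_C\circ T_R$ at the fixed point $X_\infty$ and check that the spectral radius of its differential on the tangent space of $\Delta_pX_0\Delta_q$ is $<1$ --- the displayed computation being essentially what excludes eigenvalue~$1$ --- but the difficulty is the same in either form.
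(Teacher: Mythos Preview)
Your argument is correct, but it follows a route quite different from the paper's.

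\textbf{What the paper does.} The paper never uses the Pythagorean identity or any local linearisation. It first uses the functionals $F_S$ (equivalently $-D(S\|\,\cdot\,)$) only to conclude that the positive entries of all $X_n$ are uniformly bounded below by some $\gamma>0$. Then it works directly with the recursion $R(X_{2n+2})=P(X_{2n})R(X_{2n})$, where $P(X_{2n})$ is an explicit $p\times p$ stochastic matrix with the same support as $X_0X_0^{\top}$. Using the graph on $\odc 1,p\fdc$ whose edges are the pairs $(i,i')$ with $(X_0X_0^{\top})(i,i')>0$, it shows that a product of $d$ consecutive matrices $P(X_{2n})$ has all entries on each connected block bounded below by $c^d>0$; the elementary diameter--contraction lemma (Lemma~\ref{decreasing the length}) then gives $\diam(R_{I_k}(X_{2n+2d}))\le(1-c^d)\diam(R_{I_k}(X_{2n}))$, hence $R_i(X_{2n})\to\lambda_k$ at an at least geometric rate. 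A marginal identity together with the existence of $S\in\Gamma$ with $\supp(S)=\supp(X_0)$ forces $\lambda_k=1$. The geometric convergence of $X_n$ and the fact that $X_\infty\in\Delta_pX_0\Delta_q$ follow from the convergent products $\prod_n R_i(X_{2n})$, $\prod_n C_j(X_{2n+1})$; uniqueness and the $I$-projection characterisation come from strict convexity of $Y\mapsto D(Y\|X_0)$ on $\Gamma(X_0)$ and vanishing of its differential at any $Y_0\in\Gamma\cap\Delta_pX_0\Delta_q$.

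\textbf{What you do, and what each buys.} You take the Csisz\'ar route: convergence first, via the decreasing $D(S\|X_n)$ and a limit-point argument, and only afterwards the rate, via a local error bound obtained from the fact that the marginal map is an immersion on $\Delta_pX_0\Delta_q$ at $X_\infty$ (your tangent-space computation $\sum(\xi_i+\eta_j)^2X_\infty(i,j)=0$ is the key point, and it is correct). This chain --- immersion $\Rightarrow$ $\|$marginal error$\|\gtrsim\|X-X_\infty\|$, Taylor $\Rightarrow$ $d_n\lesssim\|X_n-X_\infty\|^2$, and $d_n-d_{n+1}\asymp\|$marginal error$\|^2$ --- does yield $d_{n+1}\le(1-c')d_n$. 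The advantage of your approach is conceptual: it explains the geometric rate as transversality of the smooth manifold $\Delta_pX_0\Delta_q$ with the affine set $\Gamma$ at $X_\infty$, and it would carry over with little change to related problems. The paper's approach is more elementary and self-contained (no inverse-function theorem, no prior knowledge of the limit), gives the rate on $R_i(X_{2n})$ directly rather than after establishing convergence, and --- importantly for the rest of the paper --- the same stochastic-matrix recursion $R(X_{2n+2})=P(X_{2n})R(X_{2n})$ is reused verbatim in the divergence case, where your local analysis near a fixed point would no longer be available.
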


For example, if $p=q=2$, $a_1=b_1=2/3$, $a_2=b_2=1/3$, and 
$$X_0 = \frac{1}{4}
\left(\begin{array}{cc}
2 & 1 \\
1 & 0
\end{array}\right).$$ 
Then for every $n \ge 1$, $X_n$ or $X_n^\top$ is equal to
$$\frac{1}{3(2^n-1)}
\left(\begin{array}{cc}
2^n & 2^n-2 \\
2^n-1 & 0
\end{array}\right) 
,$$
depending on whether $n$ is odd or even.
The limit is
$$X_\infty = \frac{1}{3}
\left(\begin{array}{cc}
1 & 1 \\
1 & 0
\end{array}\right).$$ 

The second case is also well-known, except the fact that 
the quantities $R_i(X_{2n})-1$ and $C_j(X_{2n+1})-1$ are $o(n^{-1/2})$. 

\begin{theo}\label{slow convergence}(Case of slow convergence)
Assume that $\Gamma$ contains some matrix with support 
included in $\supp(X_0)$ but contains no matrix with support 
equal to $\supp(X_0)$. Then 
\begin{enumerate}
\item The series 
$$\sum_{n \ge 0} (R_i(X_{2n})-1)^2 \text{ and } 
\sum_{n \ge 0} (C_j(X_{2n+1})-1)^2$$
converge. 
\item The sequences $(\sqrt{n}(R_i(X_{n})-1))_{n \ge 0}$ and 
$(\sqrt{n}(C_j(X_{n})-1))_{n \ge 0}$ converge to $0$. 
In particular, the sequences $(R_i(X_{n}))_{n \ge 0}$ and 
$(C_j(X_{n}))_{n \ge 0}$ converge to $1$. 
\item The sequence $(X_n)_{n \ge 0}$ converges to some matrix 
$X_\infty$ whose support contains the support of every matrix 
in $\Gamma(X_0)$. 
\item The limit $X_\infty$ is the unique matrix achieving the minimum
of $D(Y||X_0)$ over all $Y \in \Gamma(X_0)$. 
\item If $(i,j) \in \supp(X_0) \setminus \supp(X_\infty)$, 
the infinite product  
$R_i(X_0)C_j(X_1)R_i(X_2)C_j(X_3)\cdots$ is infinite. 
\end{enumerate}
\end{theo}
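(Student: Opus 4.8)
The plan is to handle items 1, 3, 4 and 5 with the relative-entropy calculus recalled in the introduction, and to concentrate the real effort on the quantitative refinement of item 2.

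First I would set up the Pythagorean calculus. Fix any $S\in\Gamma(X_0)$, which exists by hypothesis, so that $D(S||X_0)<\infty$. Since $\Gamma_R$ and $\Gamma_C$ are linear families and $T_R,T_C$ are the associated $I$-projections, one has for every $n$ the identity $D(S||X_n)=D(S||X_{n+1})+D(X_{n+1}||X_n)$; hence $(D(S||X_n))_n$ is non-increasing and $\sum_n D(X_{n+1}||X_n)\le D(S||X_0)<\infty$. A direct computation using $X_{2n}(+,+)=1$ gives
$$D(X_{2n+1}||X_{2n})=-\sum_i a_i\ln R_i(X_{2n})=\sum_i a_i\bigl(R_i(X_{2n})-1-\ln R_i(X_{2n})\bigr),$$
and since $0<R_i(X_{2n})\le 1/\min_k a_k$, the elementary inequality $x-1-\ln x\ge c(x-1)^2$ on $(0,1/\min_k a_k]$ yields $D(X_{2n+1}||X_{2n})\ge c'\sum_i(R_i(X_{2n})-1)^2$; summing over $n$ gives item 1, and the column statement is symmetric. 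For items 3 and 4 I would either quote Csisz\'ar's cyclic $I$-projection theorem (recalled in the introduction) or re-prove convergence inside the backward-products framework of the paper: $(X_n)$ converges to the $I$-projection $X_\infty$ of $X_0$ onto $\Gamma$, which lies in $\Gamma(X_0)$, and every $S\in\Gamma(X_0)$ satisfies $\supp(S)\subset\supp(X_\infty)$, since an index $(i,j)\in\supp(S)\setminus\supp(X_\infty)$ would force $D(S||X_{2n})\to+\infty$, contradicting the monotonicity just obtained; by Theorem \ref{criteria} this identifies $X_\infty$ with $S_0$. Item 5 is then immediate: writing the iteration as $X_{2n}(i,j)=X_0(i,j)\prod_{k<n}R_i(X_{2k})^{-1}C_j(X_{2k+1})^{-1}$, if $(i,j)\in\supp(X_0)\setminus\supp(X_\infty)$ then $X_{2n}(i,j)\to 0$ while $X_0(i,j)>0$, so $\prod_{k<n}R_i(X_{2k})C_j(X_{2k+1})\to+\infty$, and since $R_i(X_{2n})\to1$ (already from item 1) the interleaved partial products of $R_i(X_0)C_j(X_1)R_i(X_2)C_j(X_3)\cdots$ have the same limit.

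The heart of the matter is item 2, which I would approach by linearising the dynamics of the marginal discrepancies near the limit. Set $x_n=(\ln R_i(X_{2n}))_i\in\rrf^p$ and $y_n=(\ln C_j(X_{2n+1}))_j\in\rrf^q$, both tending to $0$. Using the definitions of $T_R,T_C$ together with $X_{2n}\in\Gamma_C$ and $X_{2n+1}\in\Gamma_R$ one obtains the exact identities $C_j(X_{2n+1})=\sum_i Q_n(j,i)R_i(X_{2n})^{-1}$ and $R_i(X_{2n+2})=\sum_j P_{n+1}(i,j)C_j(X_{2n+1})^{-1}$, where $Q_n(j,i)=X_{2n}(i,j)/b_j$ and $P_{n+1}(i,j)=X_{2n+1}(i,j)/a_i$ are row-stochastic and converge to the analogous matrices built from $X_\infty$. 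Taking logarithms and expanding to second order gives $y_n=-Q_nx_n+O(||x_n||^2)$ and $x_{n+1}=-P_{n+1}y_n+O(||y_n||^2)$, hence
$$x_{n+1}=M_nx_n+O(||x_n||^2),\qquad M_n:=P_{n+1}Q_n,$$
with $M_n$ row-stochastic, $M_n\to M_\infty$, and $aM_\infty=a$ (indeed $a_iM_\infty(i,i')=\sum_j X_\infty(i,j)X_\infty(i',j)/b_j$ is symmetric in $i,i'$, so $M_\infty$ is $a$-reversible with positive diagonal). I would then split $\rrf^p$ along the spectral decomposition of $M_\infty$: off its eigenvalue-$1$ eigenspace $M_\infty$ is a strict contraction, so that part of $x_n$ decays geometrically; along the constant vector the constraint $\sum_i a_ie^{x_n(i)}=1$ forces the component to be $O(||x_n||^2)$; and the remaining piece $z_n$ --- the component of $x_n$ in the eigenvalue-$1$ eigenspace annihilated by $a$, which is nontrivial precisely because in the critical case $\supp(X_\infty)$ is disconnected (Theorem \ref{criteria}) --- satisfies a recursion whose linear part is the identity, $z_{n+1}=z_n+O(||z_n||^2)$. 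Granting that this purely nonlinear recursion makes $||z_n||$ eventually non-increasing, item 1 gives $\sum_n||z_n||^2<\infty$, and the elementary fact that a non-increasing summable sequence $(v_n)$ satisfies $nv_n\to0$ yields $n||z_n||^2\to0$; since $R_i(X_{2n})-1=x_n(i)+O(||x_n||^2)$ with $||x_n||=O(||z_n||)$ up to a geometric term, this gives $\sqrt n\,(R_i(X_{2n})-1)\to0$, while for odd indices the quantity vanishes identically; the column statement is symmetric.

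The step I expect to be the obstacle is the last one: showing that the quadratic correction on the eigenvalue-$1$ eigenspace is dissipative, so that $||z_n||$ decreases. A soft argument from $\sum_n||z_n||^2<\infty$ alone cannot suffice, since such a sequence need not be $o(n^{-1/2})$; what is needed is the precise form of $M_\infty$ on that eigenspace, the bounds $||X_{2n+1}-X_{2n}||+||X_{2n+2}-X_{2n+1}||=O(||x_n||)$ controlling the drift of $Q_n,P_n$, and the entropy-decrement inequality $D(X_\infty||X_{2n})-D(X_\infty||X_{2n+2})\ge c'\sum_i(R_i(X_{2n})-1)^2$ from the first paragraph, which together pin the decay at the Riccati rate $O(n^{-1})$. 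Equivalently, this is the point at which one invokes the finite-variation strengthening of the new backward-products theorem, applied to $(M_n)$, whose hypotheses --- diagonal entries bounded away from $0$ and bounded ratios $M_n(j,i)/M_n(i,j)$ --- are met near the limit.
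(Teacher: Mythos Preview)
Your treatment of items 1, 3, 4, 5 is essentially the paper's: the computation $D(X_{2n+1}||X_{2n})=\sum_i a_i\,g(R_i(X_{2n}))$ with $g(t)=t-1-\ln t$ is exactly how the paper obtains item 1, and the limit-point argument via monotonicity of $D(S||X_n)$ (equivalently, of $F_S(X_n)$) matches the paper's. One slip: $X_\infty$ is the $I$-projection of $X_0$ on $\Gamma$, not $S_0$ itself; they share the same support but need not coincide.

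For item 2 you have taken a far harder route than the paper and ended at a gap you yourself flag: you need the quadratic correction on the eigenvalue-$1$ eigenspace to be dissipative so that $\|z_n\|$ is eventually non-increasing, and you do not prove this. (There is also a subtler worry: some entries of $X_{2n}$ tend to zero since $\supp(X_\infty)\subsetneq\supp(X_0)$, so the uniformity of your second-order expansions near the limit is not automatic.) The paper bypasses all of this with an exact, global monotone quantity. Define Bregman's $L_1$-error
\[
e(X)=\sum_i a_i\,|R_i(X)-1|+\sum_j b_j\,|C_j(X)-1|.
\]
For $X\in\Gamma_C$ one has $e(T_R(X))=\sum_j\bigl|\sum_i X(i,j)(R_i(X)^{-1}-1)\bigr|\le\sum_i a_iR_i(X)\,|R_i(X)^{-1}-1|=e(X)$, and symmetrically $e(T_C(Y))\le e(Y)$ for $Y\in\Gamma_R$; hence $(e(X_n))_{n\ge1}$ is non-increasing. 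By convexity, $e(X_n)^2/2\le\sum_i a_i(R_i(X_n)-1)^2+\sum_j b_j(C_j(X_n)-1)^2$, so item 1 gives $\sum_n e(X_n)^2<\infty$. Now the very ``elementary fact'' you invoke---a non-increasing sequence with summable squares satisfies $n v_n^2\to0$---applied to $v_n=e(X_n)$ yields $\sqrt n\,e(X_n)\to0$, hence $\sqrt n\,(R_i(X_n)-1)\to0$ and $\sqrt n\,(C_j(X_n)-1)\to0$ at once. You had the right finishing move but aimed it at the wrong quantity; no linearisation, spectral splitting, or backward-products theorem is needed here (the paper reserves that machinery for the divergent case, theorem~\ref{divergence}).
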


Actually the assumption that $\Gamma$ contains no matrix with 
support equal to $\supp(X_0)$ can be removed; but when this assumption 
fails, theorem~\ref{fast convergence} applies, so 
theorem~\ref{slow convergence} brings nothing new. 
When this assumption holds, the last conclusion of 
theorem~\ref{slow convergence} shows that the rate of convergence 
cannot be in $o(n^{-1-\ep})$ for any $\ep>0$. 

However, a rate of convergence in $\Theta(n^{-1})$ is possible, 
and we do not know whether other rates of slow convergence may occur. 
For example, consider $p=q=2$, $a_1=a_2=b_1=b_2=1/2$, and 
$$X_0 = \frac{1}{3}
\left(\begin{array}{cc}
1 & 1 \\
1 & 0
\end{array}\right).$$ 
Then for every $n \ge 1$, $X_n$ or $X_n^\top$ is equal to
$$\frac{1}{2n+2}
\left(\begin{array}{cc}
1 & n \\
n+1 & 0
\end{array}\right),$$
depending on whether $n$ is odd or even.
The limit is
$$X_\infty = \frac{1}{2}
\left(\begin{array}{cc}
0 & 1 \\
1 & 0
\end{array}\right).$$ 

When $\Gamma$ contains no matrix with support included in $\supp(X_0)$, 
we already know by Gietl and Reffel's theorem~\cite{Gietl - Reffel} 
that both sequences $(X_{2n})_{n \ge 0}$ and $(X_{2n+1})_{n \ge 0}$ 
converge. The convergence may be slow, so Aas gives in~\cite{Aas} an 
algorithm to fasten the convergence. Aas' algorithm finds and exploits 
the block structure associated to the inconsistent problem of finding 
a non-negative matrix whose marginals are $a$ and $b$ and whose support 
is contained in $\supp(X_0)$. The next two theorems give a complete 
description of the two limit points and how to find them. 

\begin{theo}\label{divergence} (Case of divergence)
Assume that $\Gamma$ contains no matrix with support 
included in $\supp(X_0)$. 

Then there exist some positive integer $r \le \min(p,q)$, some 
partitions $\{I_1,\ldots,I_r\}$ of $\odc 1,p \fdc$ 
and $\{J_1,\ldots,J_r\}$ of $\odc 1,q \fdc$ such that:
\begin{enumerate}
\item $(R_i(X_{2n}))_{n \ge 0}$ converges to $\lambda_k = b(J_k)/a(I_k)$ 
whenever $i \in I_k$; 
\item $(C_j(X_{2n+1}))_{n \ge 0}$ converges $\lambda_k^{-1}$ whenever $j \in J_k$; 
\item $X_n(i,j) = 0$ for every $n \ge 0$ whenever 
$i \in I_k$ and $j \in J_{k'}$ with $k < k'$;
\item $X_n(i,j) \to 0$ as $n \to +\infty$ at a geometric rate whenever 
$i \in I_k$ and $j \in J_{k'}$ with $k > k'$;
\item The sequence $(X_{2n})_{n \ge 0}$ converges 
to the unique matrix achieving the minimum
of $D(Y||X_0)$ over all $Y \in \Gamma(a',b,X_0)$,  
where $a'_i/a_i = \lambda_k$ whenever $i \in I_k$;
\item The sequence $(X_{2n+1})_{n \ge 0}$ converges 
to the unique matrix achieving the minimum
of $D(Y||X_0)$ over all $Y \in \Gamma(a,b',X_0)$,
where $b'_j/b_j = \lambda_k^{-1}$ whenever $j \in J_k$; 
\item For every $k \in \odc 1,r \fdc$, $a'(I_k)=b(J_k)$ and $a(I_k)=b'(J_k)$. 
Morevoer, the support of any matrix in $\Gamma(a',b,X_0) \cup \Gamma(a,b',X_0)$
is contained in $I_1 \times J_1 \cup \cdots \cup I_r \times J_r$. 
\item Let $D_1 = \diag(a'_1/a_1,\ldots,a'_p/a_p)$ and 
$D_2 = \diag(b_1/b'_1,\ldots,b_q/b'_q)$. Then for every 
$S \in \Gamma(a,b',X_0)$, $D_1S = SD_2 \in \Gamma(a',b,X_0)$, 
and all matrices in $\Gamma(a',b,X_0)$ can be written in this way.  
\end{enumerate}
\end{theo}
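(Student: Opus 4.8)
The plan is to isolate the one genuinely analytic input — the convergence of the subsequences $(X_{2n})_{n\ge0}$ and $(X_{2n+1})_{n\ge0}$ — and then to deduce items 1--8 from it by essentially elementary manipulations. That convergence is the content of Gietl and Reffel's theorem; alternatively, and this is the route taken here, it follows (with finite variation) from our stabilization theorem for backward products of stochastic matrices, once one encodes a double step of the IPFP as left-multiplication by a stochastic matrix $M_n$ with diagonal entries bounded away from $0$ and with bounded ratios $M_n(j,i)/M_n(i,j)$. The natural candidate is a lazy version of the random walk on the disjoint union of the row and column index sets with transition weights built from $X_n$: its diagonal is then $\ge 1/2$, while $M_n(j,i)/M_n(i,j)$ is a ratio of two marginal values of $X_n$, which stays in a fixed compact subinterval of $(0,\infty)$ all along the iteration — this boundedness, valid even in the incompatible case, being a short separate lemma and exactly what feeds the hypotheses of the stabilization theorem. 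The hard part will be this encoding: choosing the $M_n$ and checking that convergence of the backward product $M_n\cdots M_1$ is equivalent to convergence of the marginal vectors $(X_n(i,+))_i$ and $(X_n(+,j))_j$ along the two parities, hence of the matrices themselves. Everything below assumes this; write $X^{\mathrm{ev}}=\lim_n X_{2n}$, which lies in $\Gamma_C$, and $X^{\mathrm{od}}=\lim_n X_{2n+1}$, which lies in $\Gamma_R$, and note $\supp X^{\mathrm{ev}},\supp X^{\mathrm{od}}\subseteq\supp X_0$.

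Next I would read off the block structure. The limits $R_i(X_{2n})\to X^{\mathrm{ev}}(i,+)/a_i>0$ and $C_j(X_{2n+1})\to X^{\mathrm{od}}(+,j)/b_j>0$ exist (positivity coming from the boundedness lemma above); let $\lambda_1<\cdots<\lambda_r$ be the distinct values of the first family and set $I_k=\{i:\lim_n R_i(X_{2n})=\lambda_k\}$. For $(i,j)\in\supp X^{\mathrm{ev}}$, letting $n\to\infty$ in $X_{2n+2}(i,j)=X_{2n}(i,j)/(R_i(X_{2n})C_j(X_{2n+1}))$ forces $R_i(X_{2n})C_j(X_{2n+1})\to 1$, so $\lim_n C_j(X_{2n+1})=\lambda_k^{-1}$ whenever $i\in I_k$; in particular two rows of $I_k,I_{k'}$ meeting a common column of $\supp X^{\mathrm{ev}}$ satisfy $k=k'$. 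Hence the sets $J_k=\{j:\exists\, i\in I_k,\ (i,j)\in\supp X^{\mathrm{ev}}\}$ are pairwise disjoint, and they cover all columns since every column carries positive $b$-mass; so $\{J_1,\dots,J_r\}$ is a partition of the column indices, $\lim_n C_j(X_{2n+1})=\lambda_k^{-1}$ for $j\in J_k$, and $r\le\min(p,q)$ since both partitions have $r$ non-empty parts. (The incompatibility hypothesis enters here: $r=1$ would give $\lambda_1=1$, hence $X^{\mathrm{ev}}\in\Gamma(X_0)$, so in fact $r\ge 2$.) Denote by $k(i)$, resp. $k(j)$, the index of the block containing row $i$, resp. column $j$; the identity $\lambda_k=b(J_k)/a(I_k)$ of item 1 will drop out once the support of $X^{\mathrm{ev}}$ is pinned down.

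For the support statements I would iterate the updates to obtain $X_{2N}(i,j)=X_0(i,j)/\prod_{n=0}^{N-1}R_i(X_{2n})C_j(X_{2n+1})$. For $i\in I_k$ and $j\in J_{k'}$ the factors converge to $\lambda_k/\lambda_{k'}$. If $k<k'$ this limit is $<1$, so the product tends to $0$ and $X_{2N}(i,j)\to\infty$ unless $X_0(i,j)=0$; since each $X_{2N}$ is a probability matrix this forces $X_0(i,j)=0$, and then $X_n(i,j)=0$ for every $n$ because the IPFP preserves supports — item 3. If $k>k'$ the ratio exceeds $1$, the product grows geometrically, $X_{2N}(i,j)\to 0$ geometrically, and $X_{2N+1}(i,j)=X_{2N}(i,j)/R_i(X_{2N})\to 0$ geometrically too, as $R_i(X_{2N})\to\lambda_k>0$ — item 4. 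Consequently $\supp X_0$ (hence $\supp X_n$ for every $n$, and $\supp X^{\mathrm{ev}},\supp X^{\mathrm{od}}$) lies in $\bigcup_{k\ge l}I_k\times J_l$, and the strictly lower-triangular entries of $X^{\mathrm{ev}}$ vanish in the limit by item 4, so $\supp X^{\mathrm{ev}}\subseteq\bigcup_k I_k\times J_k$. Then $\sum_{i\in I_k}a_i R_i(X_{2n})=\sum_{i\in I_k}X_{2n}(i,+)\to\sum_{i\in I_k}X^{\mathrm{ev}}(i,+)=\sum_{j\in J_k}X^{\mathrm{ev}}(+,j)=b(J_k)$, while the same left-hand side tends to $\lambda_k\,a(I_k)$; hence $\lambda_k=b(J_k)/a(I_k)$, and the row-marginal of $X^{\mathrm{ev}}$ is $a'$ while the column-marginal of $X^{\mathrm{od}}$ is $b'$. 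This completes items 1--4.

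It remains to identify the limits and the fitting sets. From $a'_i=\lambda_k a_i$ on $I_k$ and $\lambda_k=b(J_k)/a(I_k)$ one gets $a'(I_k)=b(J_k)$ and then $b'(J_k)=\lambda_k^{-1}b(J_k)=a(I_k)$. For $S\in\Gamma(a',b,X_0)$ the support is contained in $\bigcup_{k\ge l}I_k\times J_l$ by item 3; writing $\beta_{kl}=\sum_{(i,j)\in I_k\times J_l}S(i,j)$ (so $\beta_{kl}=0$ for $k<l$) and summing the row identities $\sum_l\beta_{kl}=a'(I_k)=b(J_k)$ against the column identities $\sum_k\beta_{kl}=b(J_l)$ over $l\le m$, a telescoping ("staircase") cancellation gives $\sum_{l\le m<k}\beta_{kl}=0$, hence $\beta_{kl}=0$ for $k>l$ as well, so $\supp S\subseteq\bigcup_k I_k\times J_k$; the same argument applies to $\Gamma(a,b',X_0)$ using $a(I_k)=b'(J_k)$ — item 7. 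On a matrix with such a support the diagonal matrices $D_1=\diag(a'_i/a_i)=\diag(\lambda_{k(i)})$ and $D_2=\diag(b_j/b'_j)=\diag(\lambda_{k(j)})$ act identically (a surviving entry $(i,j)$ has $k(i)=k(j)$), so $D_1 S=SD_2$, and a one-line check of marginals gives $D_1 S=SD_2\in\Gamma(a',b,X_0)$ for $S\in\Gamma(a,b',X_0)$, with $D_1^{-1}$ the inverse bijection — item 8. Finally, $X^{\mathrm{ev}}\in\Gamma(a',b,X_0)$ and $X^{\mathrm{od}}\in\Gamma(a,b',X_0)$ by the marginal computations above, so both sets are non-empty. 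For $Y\in\Gamma(a',b,X_0)$, the identities $D(Y\|T_R(X))=D(Y\|X)+\sum_i Y(i,+)\ln R_i(X)$ and $D(Y\|T_C(X))=D(Y\|X)+\sum_j Y(+,j)\ln C_j(X)$, with $Y(i,+)=a'_i$ and $Y(+,j)=b_j$, show that $D(Y\|X_{2n})-D(Y\|X_0)$ does not depend on $Y$; subtracting the same relation for $Y=X^{\mathrm{ev}}$ and letting $n\to\infty$ (using $X_{2n}\to X^{\mathrm{ev}}$ and $D(X^{\mathrm{ev}}\|X_{2n})\to 0$) yields the Pythagorean identity $D(Y\|X_0)=D(X^{\mathrm{ev}}\|X_0)+D(Y\|X^{\mathrm{ev}})$, so $X^{\mathrm{ev}}$ is the unique minimizer of $D(\,\cdot\,\|X_0)$ on $\Gamma(a',b,X_0)$ — item 5. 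Item 6 is the mirror statement, with $X^{\mathrm{od}}$, $Y(i,+)=a_i$, $Y(+,j)=b'_j$, and the column parity.
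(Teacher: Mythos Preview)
Your overall architecture is close to the paper's, but there is a genuine structural gap in the first step. You claim that the stabilization theorem applied to your lazy bipartite walk on the $p+q$ row/column indices gives convergence of the marginal vectors ``hence of the matrices themselves''. It does not: a $(p+q)$-dimensional backward product can at best yield convergence of $(p+q)$-dimensional data, i.e.\ of $(R_i(X_{2n}))_i$ and $(C_j(X_{2n+1}))_j$, not of the $pq$ entries of $X_{2n}$. Knowing that $R_i(X_{2n})C_j(X_{2n+1})\to 1$ on a diagonal block does not by itself force the infinite product $\prod_n R_i(X_{2n})C_j(X_{2n+1})$ to converge, so $X_{2n}(i,j)$ need not converge from this alone. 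The paper is explicit about this two-stage structure: it first uses the recursion $R(X_{2n+2})=P(X_{2n})R(X_{2n})$ (its Lemma on the matrix $P(X)$, with $P(X)(i,k)=\sum_j T_R(X)(i,j)T_R(X)(k,j)/(a_i b_j C_j(T_R(X)))$) together with the stabilization theorem to get convergence of $R(X_{2n})$ and $C(X_{2n+1})$; only then does it take an arbitrary limit point $L$ of $(X_{2n})$, observe $L\in\Gamma(a',b,X_0)$, and show via the $F_S$ (equivalently $I$-divergence) identity that $D(S\|X_0)-D(S\|L)$ is independent of $S\in\Gamma(a',b,X_0)$, forcing $L$ to be the unique $I$-projection --- hence uniqueness of the limit point and, by compactness, convergence of $(X_{2n})$.

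The good news is that your own item~5 argument is precisely this $I$-projection uniqueness step. The fix is therefore a reordering, not a new idea: first prove only marginal convergence (items 1--2), derive items 3--4 and the formula $\lambda_k=b(J_k)/a(I_k)$ as you do (these use only the marginal limits, not $X^{\mathrm{ev}}$), then pick a limit point $L$ of $(X_{2n})$, run your Pythagorean computation with $L$ in place of $X^{\mathrm{ev}}$ along the corresponding subsequence, and conclude convergence. A related point: your proposed encoding is not the paper's, and you have not exhibited a vector $V_n$ with $V_{n+1}=M_nV_n$ for your lazy bipartite $M_n$; since one IPFP half-step sends row-ratios to \emph{harmonic} means (Lemma~\ref{first equalities}), the linear recursion only appears after combining two half-steps as in the paper's $P(X)$, so this part of the sketch needs to be made concrete. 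Finally, your ``staircase'' argument for item~7 is a correct and pleasantly self-contained alternative to the paper's route, which instead invokes the critical case of Theorem~\ref{criteria} applied to the blocks $A_k\times B_k$.
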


For example, if $p=q=2$, $a_1=b_1=1/3$, $a_2=b_2=2/3$, and 
$$X_0 = \frac{1}{3}
\left(\begin{array}{cc}
1 & 1 \\
1 & 0
\end{array}\right).$$ 
Then for every $n \ge 1$, $X_n$ or $X_n^\top$ is equal to
$$\frac{1}{3(3 \times 2^{n-1}-1)}
\left(\begin{array}{cc}
1 & 3 \times 2^{n-1}-2 \\
2(3 \times 2^{n-1}-1) & 0
\end{array}\right), 
$$
depending on whether $n$ is odd or even. 
We get $a'_1=b'_1=2/3$ and $a'_2=b'_2=1/3$ since
the two limit points are
$$\frac{1}{3}
\left(\begin{array}{cc}
0 & 1 \\
2 & 0
\end{array}\right)
\text{ and }~ \frac{1}{3}
\left(\begin{array}{cc}
0 & 2 \\
1 & 0
\end{array}\right).$$ 

The symmetry in theorem~\ref{divergence} shows that the limit 
points of $(X_n)_{n \ge 0}$ would be the same if we would applying 
$T_C$ first instead of $T_R$. 

Actually, the assumption that $\Gamma(X_0)$ is empty can be removed 
and it is not used in the proof of theorem~\ref{divergence}. Indeed, 
when $\Gamma(X_0)$ is non-empty, the conclusions still hold with $r=1$ 
and $\lambda_1=1$, but theorem~\ref{divergence} brings nothing new 
in this case. 

Theorem~\ref{divergence} does not indicate what the partitions 
$\{I_1,\ldots,I_r\}$ and $\{J_1,\ldots,J_r\}$ are.   
Actually the integer $r$, the partitions 
$\{I_1,\ldots,I_r\}$ and $\{J_1,\ldots,J_r\}$ depend 
only on $a$, $b$ and on the support of $X_0$. 
and can be determined recursively as follows. 
This gives a complete description 
of the two limit points mentioned in theorem~\ref{divergence}.

\begin{theo}\label{determining the partitions}
(Determining the partitions in case of divergence)
Keep the assumption and the notations of theorem~\ref{divergence}. 
Fix $k \in \odc 1,r \fdc$, set 
$P = \odc 1,p \fdc \setminus (I_1 \cup \ldots \cup I_{k-1})$, 
$Q = \odc 1,q \fdc \setminus (J_1 \cup \ldots \cup J_{k-1})$, 
and consider the restricted problem associated to the marginals 
$a(\cdot|P) = (a_i/a(P))_{i \in P}$, $b(\cdot|Q) = (b_j/b(Q))_{j \in Q}$
and to the initial condition $(X_0(i,j))_{(i,j) \in P \times Q}$. 

If $k=r$, this restricted problem admits some solution.  

If $k<r$, the set 
$A_k \times B_k := I_k \times (Q \setminus J_k)$ is a 
cause of incompatibility of this restricted problem. 
More precisely, among all causes of incompatibility 
$A \times B$ maximizing the ratio $a(A)/b(Q \setminus B)$, it 
is the one which maximizes the set $A$ and minimizes 
the set $B$.  
\end{theo}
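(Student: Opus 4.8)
The plan is to induct on $k$, reducing at each step to a fresh IPFP problem on a smaller block. The key observation is that the partitions $\{I_1,\dots,I_r\}$ and $\{J_1,\dots,J_r\}$ are built in order: once we have identified $I_1,\dots,I_{k-1}$ and $J_1,\dots,J_{k-1}$, the remaining rows $P$ and columns $Q$ carry a self-contained sub-problem whose own divergence analysis (Theorem~\ref{divergence}) produces $I_k$ and $J_k$ as its \emph{first} block. So the real content is the base case $k=1$: \textbf{first}, I would show that $I_1$ and $J_1$ are precisely $A\times(Q\setminus B)^{c}$-type data coming from an extremal cause of incompatibility of the full problem, and \textbf{second}, I would verify that removing $I_1$ and $J_1$ leaves a sub-problem to which the whole machinery reapplies, with the extremality passing to the next stage.

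\textbf{Base case.} Assume $\Gamma(X_0)=\emptyset$, so by Theorem~\ref{criteria}(1) the set of causes of incompatibility $A\times B$ (i.e.\ $X_0$ null on $A\times B$, $a(A)>b(B^c)$) is non-empty. I would first record two stability facts about this family: (i) if $A\times B$ and $A'\times B'$ are both causes of incompatibility, then so is $(A\cup A')\times(B\cap B')$, because $X_0$ is null there and submodularity of $A\mapsto a(A)-b(B^c)$ gives $a(A\cup A')-b((B\cap B')^c)\ge (a(A)-b(B^c))+(a(A')-b(B'^c))-\big(a(A\cap A')-b((B\cup B')^c)\big)>0$ after checking the last bracket is $\le 0$ (indeed $A\cap A'$ need not be a cause, but the inequality $a(A\cap A')\le b((B\cup B')^c)$ need not hold either — this is exactly the delicate point, see below); (ii) among causes maximizing $\rho(A,B):=a(A)/b(Q\setminus B)$ (here $Q=\odc1,q\fdc$ at stage $1$), there is a canonical one, obtained by taking the union of all row-sets and intersection of all column-sets among the $\rho$-maximizers, and I must show it is still a $\rho$-maximizer. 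I would then \emph{define} $I_1:=A$ and $J_1:=Q\setminus B$ for this canonical extremal cause, and set $\lambda_1:=a(I_1)/\,$— wait, one has to match conventions: the theorem asserts $\lambda_k=b(J_k)/a(I_k)$ and $k<r$ forces $\lambda_k>1$, which is exactly $a(A)>b(B^c)=b(Q\setminus J_1)$ strict, i.e.\ $A\times B$ being a genuine incompatibility (not merely critical) is what drives $\lambda_1>1$.

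\textbf{The link with Theorem~\ref{divergence}.} To see that this combinatorially-defined $I_1,J_1$ coincide with the dynamically-defined ones of Theorem~\ref{divergence}, I would argue as follows. By conclusion~3 of Theorem~\ref{divergence}, $X_n$ vanishes on $I_k\times J_{k'}$ for $k<k'$; summing marginals over $I_1$ on the even subsequence and letting $n\to\infty$ via conclusion~1 gives $a'(I_1)=\lambda_1 a(I_1)$, and conclusion~7 gives $a'(I_1)=b(J_1)$, so $\lambda_1=b(J_1)/a(I_1)$ and, since the support of any limit point lies in $\bigcup I_k\times J_k$ (conclusion~7), $I_1\times(Q\setminus J_1)$ is a block on which $X_0$ may be nonzero only inside $I_1\times J_1$; checking $X_0$ \emph{is} null on $I_1\times (J_2\cup\cdots\cup J_r)$ uses conclusion~3 at $n=0$. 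That $\rho$ is \emph{maximized} by $(I_1,Q\setminus J_1)$ rather than by some later $(I_k,\dots)$ is because $\lambda_1\ge\lambda_2\ge\cdots$ — a monotonicity I would extract from the geometric decay in conclusion~4 together with the ordering built into the construction; and maximality of the row-set / minimality of the column-set among $\rho$-maximizers is the uniqueness half, which I would get from fact (i)/(ii) above: any other $\rho$-maximizer $A'\times B'$ can be merged to give $(I_1\cup A')\times(J_1\cap (Q\setminus B'))$ still extremal, forcing $A'\subset I_1$ and $Q\setminus B'\supset J_1$ by maximality of our canonical choice.

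\textbf{Inductive step and main obstacle.} For $k\ge2$, set $P=\odc1,p\fdc\setminus(I_1\cup\cdots\cup I_{k-1})$, $Q=\odc1,q\fdc\setminus(J_1\cup\cdots\cup J_{k-1})$, and consider the normalized restricted problem. Using conclusions 3 and 7 of Theorem~\ref{divergence} one checks that the IPFP sequence for the restricted problem is, up to the diagonal rescalings hidden in $D_1,D_2$ of conclusion~8, the restriction of $(X_n)$ to $P\times Q$ — equivalently, the partition $\{I_k,\dots,I_r\}$, $\{J_k,\dots,J_r\}$ together with the same ratios $\lambda_k,\dots,\lambda_r$ solves the divergence problem for the restricted marginals. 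Then the base case applied to the restricted problem identifies its first block as the $\rho$-maximal cause of incompatibility, which is $I_k\times(Q\setminus J_k)$; when the restricted problem is \emph{compatible} we are at $k=r$ and there is nothing combinatorial left to prove. \textbf{I expect the main obstacle to be the merging/submodularity bookkeeping in fact~(i)}: the naive inclusion–exclusion produces a term $a(A\cap A')-b((B\cup B')^c)$ of the wrong sign, so to close the argument I would instead work with the extremal cause directly — take $I_1$ to be the union of \emph{all} row-sets of $\rho$-maximizing causes and $J_1$ the intersection of the corresponding $Q\setminus B$, prove $X_0$ is null on $I_1\times(Q\setminus J_1)$ (clear, as each constituent cause is) and that $\rho(I_1,Q\setminus J_1)$ still equals the maximum (here one uses that on a $\rho$-maximizer the marginal inequality is \emph{tight in ratio}, which rigidifies how the sets can overlap and lets the union/intersection inherit tightness), thereby sidestepping the bad-sign term entirely. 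The remaining pieces — that this $I_1,J_1$ has non-empty $I_1$ and $J_1$, that $\lambda_1>1$ strictly when $k<r$, and that the recursion terminates at some $r\le\min(p,q)$ since $|P|$ strictly decreases — are then routine.
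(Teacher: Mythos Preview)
Your approach is quite different from the paper's, and it carries two real gaps. First, the inductive step is not sound as stated: the restriction of $(X_n)$ to $P\times Q$ is \emph{not} the IPFP sequence for the restricted problem, even up to diagonal rescalings. For $i\in P=I_k\cup\cdots\cup I_r$, the row-sum $X_n(i,+)$ receives contributions from columns $j\in J_1\cup\cdots\cup J_{k-1}$, and by conclusion~4 of Theorem~\ref{divergence} these entries tend to~$0$ only geometrically---they are generically nonzero for every finite $n$---so the row-renormalisation factor $R_i(X_n)$ differs from the one you would use in the restricted problem. This breaks the recursion you rely on. Second, the combinatorial merging lemma for $\rho$-maximizers, which you correctly flag as the main obstacle, is never actually proved; the vague ``tightness rigidifies overlaps'' outline does not close without further input. (There is also a sign slip: the paper has $\lambda_1<\cdots<\lambda_r$, so $\rho=a(I_k)/b(J_k)=\lambda_k^{-1}$ is maximised at $k=1$ because $\lambda_1$ is \emph{smallest}, and $\lambda_1<1$, not $>1$.)

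The paper bypasses both difficulties with a single idea you are missing: it fixes a matrix $S\in\Gamma(a',b,X_0)$, whose existence is guaranteed by Theorem~\ref{divergence}, and uses its block structure $\supp(S)\subset\bigcup_l I_l\times J_l$. If $X_0$ is null on $A\times B\subset P\times Q$, then so is $S$, and for each $l\ge k$ one has
\[
\lambda_k\,a(A\cap I_l)\ \le\ \lambda_l\,a(A\cap I_l)\ =\ a'(A\cap I_l)\ =\ S\big((A\cap I_l)\times((Q\setminus B)\cap J_l)\big)\ \le\ b\big((Q\setminus B)\cap J_l\big).
\]
Summing over $l\in\odc k,r\fdc$ gives $\lambda_k\,a(A)\le b(Q\setminus B)$, i.e.\ $a(A)/b(Q\setminus B)\le\lambda_k^{-1}=a(I_k)/b(J_k)$, and equality forces every intermediate inequality to be tight, hence $A\cap I_l=\emptyset$ and $(Q\setminus B)\cap J_l=\emptyset$ for $l>k$, i.e.\ $A\subset I_k$ and $B\supset Q\setminus J_k$. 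No induction, no merging lemma, no restricted IPFP dynamics---the auxiliary matrix $S$ does all the work.
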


Note that if a cause of incompatibility $A \times B$ 
maximizes the ratio $a(A)/b(B^c)$, then it is
maximal for the inclusion order.
We now give an example to illustrate how 
theorem~\ref{determining the partitions} enables us to
determine the partitions $\{I_1,\ldots,I_r\}$ and $\{J_1,\ldots,J_r\}$.
In the following array, the $*$ and $0$ indicate the positive and the 
null entries in $X_0$; the last column and row indicate the target 
sums on each row and column. 
$$\begin{array}{cccccc|c}
* & * & 0 & 0 & 0 & 0 & .25 \\
0 & * & * & 0 & 0 & 0 & .25 \\
0 & * & * & * & 0 & 0 & .25 \\
* & * & * & * & 0 & * & .15 \\
* & 0 & * & * & * & * & .10 \\
\hline
.05 &.05 &.1 &.2 &.2 &.4 & 1
\end{array}$$
We indicate below in underlined boldface characters 
some blocks $A \times B$ of zeroes which 
are causes of incompatibility, and the 
corresponding ratios $a(A)/b(B^c)$. 
$$\begin{array}{l}
A = \{1,2,3,4\} \\
B = \{5\} \\
\displaystyle{\frac{a(A)}{b(B^c)} = \frac{0.9}{0.8}}
\end{array}
\quad
\begin{array}{cccccc|c}
* & * & 0 & 0 & \underline{\bf 0} & 0 & .25 \\
0 & * & * & 0 & \underline{\bf 0} & 0 & .25 \\
0 & * & * & * & \underline{\bf 0} & 0 & .25 \\
* & * & * & * & \underline{\bf 0} & * & .15 \\
* & 0 & * & * & * & * & .10 \\
\hline
.05 &.05 &.1 &.2 &.2 &.4 & 1
\end{array}$$

$$\begin{array}{l}
A = \{2,3\} \\
B = \{1,5,6\} \\
\displaystyle{\frac{a(A)}{b(B^c)} = \frac{0.5}{0.35}}
\end{array}
\quad
\begin{array}{cccccc|c}
* & * & 0 & 0 & 0 & 0 & .25 \\
\underline{\bf 0} & * & * & 0 & \underline{\bf 0} & \underline{\bf 0} & .25 \\
\underline{\bf 0} & * & * & * & \underline{\bf 0} & \underline{\bf 0} & .25 \\
* & * & * & * & 0 & * & .15 \\
* & 0 & * & * & * & * & .10 \\
\hline
.05 &.05 &.1 &.2 &.2 &.4 & 1
\end{array}$$

$$\begin{array}{l}
A = \{1\} \\
B = \{3,4,5,6\} \\
\displaystyle{\frac{a(A)}{b(B^c)} = \frac{0.25}{0.1}}
\end{array}
\quad
\begin{array}{cccccc|c}
* & * & \underline{\bf 0} & \underline{\bf 0} & \underline{\bf 0} & \underline{\bf 0} & .25 \\
0 & * & * & 0 & 0 & 0 & .25 \\
0 & * & * & * & 0 & 0 & .25 \\
* & * & * & * & 0 & * & .15 \\
* & 0 & * & * & * & * & .10 \\
\hline
.05 &.05 &.1 &.2 &.2 &.4 & 1
\end{array}$$

$$\begin{array}{l}
A = \{1,2\} \\
B = \{4,5,6\} \\
\displaystyle{\frac{a(A)}{b(B^c)} = \frac{0.5}{0.2}}
\end{array}
\quad
\begin{array}{cccccc|c}
* & * & 0 & \underline{\bf 0} & \underline{\bf 0} & \underline{\bf 0} & .25 \\
0 & * & * & \underline{\bf 0} & \underline{\bf 0} & \underline{\bf 0} & .25 \\
0 & * & * & * & 0 & 0 & .25 \\
* & * & * & * & 0 & * & .15 \\
* & 0 & * & * & * & * & .10 \\
\hline
.05 &.05 &.1 &.2 &.2 &.4 & 1
\end{array}$$
One checks that the last two blocks are those which maximize the ratio
$a(A)/b(B^c)$. Among these two blocks, the latter has a bigger $A$ and 
a smaller $B$, so it is $A_1 \times B_1$. 
Therefore, $I_1 = \{1,2\}$ and $J_1 = \{1,2,3\}$, and we look at the 
restricted problem associated to the marginals 
$a(\cdot|I_1^c)$, $b(\cdot|J_1^c)$
and to the initial condition $(X_0(i,j))_{(i,j) \in I_1^c \times J_1^c}$. 
The dots below indicate the removed rows and columns. 
$$\begin{array}{cccccc|c}
\cdot & \cdot & \cdot & \cdot & \cdot & \cdot & \cdot \\
\cdot & \cdot & \cdot & \cdot & \cdot & \cdot & \cdot \\
\cdot & \cdot & \cdot & * & 0 & 0 & .5 \\
\cdot & \cdot & \cdot & * & 0 & * & .3 \\
\cdot & \cdot & \cdot & * & * & * & .2 \\
\hline
\cdot & \cdot & \cdot &.25 &.25 &.5 & 1
\end{array}$$
Two causes of impossibility have to be considered, namely 
$\{3,4\} \times \{5\}$ and $\{3\} \times \{5,6\}$. The latter 
maximizes the ratio $a(A)/b(J_1^c \setminus B)$, 
so it is $A_2 \times B_2$. Therefore, $I_2 = \{3\}$ and 
$J_2 = \{4\}$, and we look at the 
restricted problem below.
$$\begin{array}{cccccc|c}
\cdot & \cdot & \cdot & \cdot & \cdot & \cdot & \cdot \\
\cdot & \cdot & \cdot & \cdot & \cdot & \cdot & \cdot \\
\cdot & \cdot & \cdot & \cdot & \cdot & \cdot & \cdot \\
\cdot & \cdot & \cdot & \cdot & 0 & * & .6 \\
\cdot & \cdot & \cdot & \cdot & * & * & .4 \\
\hline
\cdot & \cdot & \cdot & \cdot &.33 &.67 & 1
\end{array}$$
This time, there is no cause of impossibility, so $r=3$, 
and the sets $I_3 = \{4,5\}$, $J_3 = \{5,6\}$ contain 
all the remaining indices. We shall indicate with dashlines  
the block structure defined by the partitions $\{I_1,I_2,I_3\}$ 
and $\{J_1,J_2,J_3\}$ (for readability, our example was chosen 
in such a way that each block is made of consecutive indices). 
By theorem~\ref{divergence},
the limit of the sequence $(X_{2n})_{n \ge 0}$ admits 
marginals $a'$ and $b$, its support is included in 
$\supp(X_0)$ and also in 
$(I_1\times J_1) \cup (I_2 \times J_2) \cup (I_3 \times J_3)$, 
namely it solves the problem below.
$$\begin{array}{ccc:c:cc|c}
* & * & 0 & 0 & 0 & 0 & .1 \\
0 & * & * & 0 & 0 & 0 & .1 \\
\hdashline
0 & 0 & 0 & * & 0 & 0 & .2 \\
\hdashline
0 & 0 & 0 & 0 & 0 & * & .36 \\
0 & 0 & 0 & 0 & * & * & .24 \\
\hline
.05 &.05 &.1 &.2 &.2 &.4 & 1
\end{array}$$
We observe that each cause of incompatibility $A \times B$ related to the block 
structure, namely $I_1 \times (J_2 \cup J_3)$ or $(I_1 \cup I_2) \times J_3$,
becomes a cause of criticality with regard to the margins $a'$ and $b$, 
namely $a'(A)=b(B^c)$, so $\lim_n X_{2n}$ has zeroes on $A^c \times B^c$.
This statement fails for the ``weaker'' cause of incompatibility 
$\{1,2,3,4\} \times \{5\}$. Yet, it still holds for the 
cause of incompatibility $\{1\} \times \{3,4,5,6\}$; 
this rare situation occurs because there were two causes of 
incompatibility maximizing the ratio $a(A)/b(B^c)$, namely
$\{1\} \times \{3,4,5,6\}$ and $\{1,2\} \times \{4,5,6\}$. 
Hence $\lim_n X_{2n}$ has another additional zero at 
position $(2,2)$. We add dashlines below to make visible this 
refinement of the block structure.   
Here, no minimization of $I$-divergence is required to get 
the limit of $(X_{2n})_{n \ge 0}$ since the set $\Gamma(a',b,X_0)$ contains 
only one matrix, namely
$$\begin{array}{cc:c::c::cc|c}
.05 &.05 & 0 & 0 & 0 & 0 & .1 \\
\hdashline
0 & 0 & 0.1 & 0 & 0 & 0 & .1 \\
\hdashline\hdashline
0 & 0 & 0 & .2 & 0 & 0 & .2 \\
\hdashline\hdashline
0 & 0 & 0 & 0 & 0 & .36 & .36 \\
0 & 0 & 0 & 0 & .2 & .04 & .24 \\
\hline
.05 &.05 &.1 &.2 &.2 &.4 & 1
\end{array}~.$$
We note that the convergence of $X_{2n}(2,2)$ to $0$ is slow since 
$$\lim_{n \to +\infty} \frac{X_{2n+2}(2,2)}{X_{2n}(2,2)} 
= \lim_{n \to +\infty} \frac{1}{R_2(X_{2n})C_2(X_{2n+1})} 
= \frac{1}{\lambda_1\lambda_1^{-1}} = 1.$$
This is a typical situation in which the following observation is useful.

\begin{theo}\label{modification of the initial matrix}
The matrices $\lim_n X_{2n}$ and $\lim_n X_{2n+1}$ have the same 
support $\Sigma$ and $\Sigma$ is the union of the supports 
of all matrices in $\Gamma(a',b,X_0) \cup \Gamma(a,b',X_0)$. 

Morevoer, if $X'_0$ denotes the matrix obtained from $X_0$ by setting to $0$ 
all entries outside $\Sigma$, then the limit points provided by the IPFP 
starting from $X_0$ and from $X'_0$ coincide. 
\end{theo}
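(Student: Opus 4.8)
The plan is to first pin down $\Sigma$ using the variational characterizations of $\lim_n X_{2n}$ and $\lim_n X_{2n+1}$ from Theorem~\ref{divergence}, and then to check that the block-decomposition produced by Theorem~\ref{divergence} when we start the IPFP from $X'_0$ is the very same as the one produced when we start from $X_0$; the coincidence of the limit points will follow at once. For the first part, recall from item~5 of Theorem~\ref{divergence} that $\lim_n X_{2n}$ minimizes $D(Y||X_0)$ over the convex set $\Gamma(a',b,X_0)$. A standard property of $I$-projections is that such a minimizer has support containing the support of every element of the set: if some $Y\in\Gamma(a',b,X_0)$ charged a cell $(i,j)$ outside the support of the minimizer $Y^\ast$, then along the segment from $Y^\ast$ to $Y$ (which stays in $\Gamma(a',b,X_0)$) the newly activated term $t\,Y(i,j)\ln(t\,Y(i,j)/X_0(i,j))$ is negative and of larger order than the bounded contribution of the other terms for small $t>0$, so $D(\cdot||X_0)$ would strictly decrease at $Y^\ast$. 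Hence $\supp(\lim_n X_{2n})=\bigcup\{\supp(S):S\in\Gamma(a',b,X_0)\}$, and likewise $\supp(\lim_n X_{2n+1})=\bigcup\{\supp(S):S\in\Gamma(a,b',X_0)\}$ by item~6. By item~8 of Theorem~\ref{divergence}, $S\mapsto D_1S$ is a support-preserving bijection from $\Gamma(a,b',X_0)$ onto $\Gamma(a',b,X_0)$, so the two unions coincide; this common set is $\Sigma$, and it is the union of the supports of all matrices in $\Gamma(a',b,X_0)\cup\Gamma(a,b',X_0)$.

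For the second part, note that $\lim_n X_{2n}$ has support $\Sigma$ and positive marginals $a'$, $b$, so every row and column of $X'_0$ is non-zero and the IPFP from $X'_0$ is well defined; write $(X'_n)_{n\ge0}$ for its iterates. Since $\Sigma\subseteq\supp(X_0)$, any matrix with marginals $a$, $b$ supported in $\Sigma$ would lie in $\Gamma(a,b,X_0)$, which is empty by hypothesis; thus the problem based on $X'_0$ is again in the divergent case, and Theorem~\ref{divergence} applies, producing partitions $\{I'_k\}$, $\{J'_k\}$, ratios $\lambda'_k$, and modified marginals that I denote $a''$ and $b''$. Because every element of $\Gamma(a',b,X_0)$ is supported in $\Sigma=\supp(X'_0)$ while $\Sigma\subseteq\supp(X_0)$, we have $\Gamma(a',b,X'_0)=\Gamma(a',b,X_0)$, and similarly $\Gamma(a,b',X'_0)=\Gamma(a,b',X_0)$; moreover $X_0$ and $X'_0$ agree on $\Sigma$, so $D(Y||X_0)=D(Y||X'_0)$ for every $Y$ supported in $\Sigma$. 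Consequently, once we know $a''=a'$ and $b''=b'$, items~5 and~6 of Theorem~\ref{divergence} applied to $X'_0$ give $\lim_n X'_{2n}=\lim_n X_{2n}$ and $\lim_n X'_{2n+1}=\lim_n X_{2n+1}$, which is the claim. So everything reduces to showing that the block-decompositions attached to $X_0$ and to $X'_0$ coincide.

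This is the core of the argument, done by induction on $k$. Assuming $I'_l=I_l$, $J'_l=J_l$, $\lambda'_l=\lambda_l$ for $l<k$, the recursive description of Theorem~\ref{determining the partitions} makes the remaining index sets $P_k$ and $Q_k$ the same for both problems, and it suffices to show that the restricted $X'_0$-problem on $P_k\times Q_k$ has maximal-ratio cause of incompatibility $I_k\times(Q_k\setminus J_k)$ with ratio $1/\lambda_k$, or is solvable when $k=r$. Let $L=\lim_n X_{2n}\in\Gamma(a',b,X_0)$; by item~7 of Theorem~\ref{divergence} its support lies in $\bigcup_l I_l\times J_l$, so the restriction of $L$ to $P_k\times Q_k$ is block-diagonal along the blocks $I_l\times J_l$ with $l\ge k$, is supported in $\Sigma$, and has row marginals $(a'_i)_{i\in P_k}$ and column marginals $(b_j)_{j\in Q_k}$. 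For any block $A\times B\subseteq P_k\times Q_k$ on which $X'_0$ vanishes, $L$ vanishes there too, so the elementary inequality established before Theorem~\ref{criteria} (applied to $L$ with the margins $a'$, $b$) gives $a'(A)=\sum_{l\ge k}\lambda_l\,a(A\cap I_l)\le b(Q_k\setminus B)$. Using the strict monotonicity $\lambda_k<\lambda_{k+1}<\cdots<\lambda_r$ — which one reads off the recursion, since combining a hypothetical cause of ratio $\ge1/\lambda_k$ inside $P_{k+1}\times Q_{k+1}$ with $I_k\times(Q_k\setminus J_k)$ produces a cause in $P_k\times Q_k$ of ratio $\ge1/\lambda_k$ and left factor strictly larger than $I_k$, contradicting the maximality of $I_k\times(Q_k\setminus J_k)$ among ratio-maximizing causes (which are stable under unions of left factors and intersections of right factors) — one deduces $a(A)/b(Q_k\setminus B)\le1/\lambda_k$, with equality forcing $A\subseteq I_k$. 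Since $I_k\times(Q_k\setminus J_k)$ is null for $X_0$, hence for $X'_0$, and realizes the ratio $1/\lambda_k$, we get $\lambda'_k=\lambda_k$, the chosen cause has left factor $I_k$ so $I'_k=I_k$, and with $A=I_k$ the equality $b(Q_k\setminus B)=\lambda_k\,a(I_k)=b(J_k)$ together with $B\cap J_k=\emptyset$ (each column of $J_k$ meets $\supp(L)\subseteq\Sigma$) forces $Q_k\setminus B=J_k$, so $J'_k=J_k$. Finally, when $k=r$, the restriction of $L$ to $I_r\times J_r$, renormalized, is a non-negative matrix with marginals $a(\cdot|I_r)$, $b(\cdot|J_r)$ and support inside $\supp(X'_0)$, so the recursion for $X'_0$ also stops at step $r$. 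This closes the induction, gives $a''=a'$ and $b''=b'$, and hence the theorem.

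The main obstacle is exactly this last step: proving that shrinking the initial support from $\supp(X_0)$ to $\Sigma$ leaves the block-decomposition unchanged. Its two pillars are the transportation inequality applied to the block-diagonal matrix $L|_{P_k\times Q_k}$ — encoding that $\Sigma$ is ``large enough'' on each diagonal block to prevent new causes of incompatibility from surfacing — and the strict monotonicity of the ratios $\lambda_k$ together with the lattice behaviour of ratio-maximizing causes; the terminal case $k=r$ additionally needs the separate verification that the restricted $X'_0$-problem is still solvable.
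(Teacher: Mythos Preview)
Your argument is correct, but it follows a genuinely different route from the paper's.

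For the first part, the paper simply passes to the limit in $X_{2n+1}=T_R(X_{2n})$ to obtain $L_{\rm odd}=D_1^{-1}L_{\rm even}$, which immediately gives equal supports; the identification of $\Sigma$ as the maximal support in $\Gamma(a',b,X_0)$ is then read off Theorem~\ref{slow convergence} applied to the IPFP with marginals $a',b$. Your variational argument via the directional derivative of $D(\cdot\|X_0)$ reaches the same conclusion and is perfectly fine (though note that it is the \emph{derivative} of the new term that blows up to $-\infty$, not the term itself).

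For the second part the two proofs diverge substantially. The paper never re-runs Theorem~\ref{determining the partitions} for $X'_0$. Instead it uses two lemmas of Pretzel on diagonal equivalence: since $X_{2n}=D'_nX_0D''_n$ and $X'_0$ agrees with $X_0$ on $\Sigma$, one has $D'_nX'_0D''_n\to L_{\rm even}$, whence $X'_0$ and $L_{\rm even}$ are diagonally equivalent (Lemma~\ref{diagonal equivalence and limit}); then Theorem~\ref{fast convergence} applied to the $(a',b)$-IPFP from $X'_0$ produces a limit diagonally equivalent to $X'_0$ with marginals $a',b$, which must equal $L_{\rm even}$ by Lemma~\ref{diagonal equivalence and marginals}; a final one-line recursion relates the $(a,b)$- and $(a',b)$-iterates from $X'_0$ via the block structure. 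This is shorter and more conceptual: it exploits that diagonal equivalence survives passage to the limit once the supports match, and then pins down the limit by uniqueness.

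Your approach, by contrast, verifies directly that the recursive construction of Theorem~\ref{determining the partitions} yields the same $(I_k,J_k,\lambda_k)$ for $X'_0$ as for $X_0$, using the transportation inequality applied to the block-diagonal restriction of $L_{\rm even}$. This is heavier but self-contained, avoiding Pretzel's lemmas entirely. Two minor remarks: the strict monotonicity $\lambda_1<\cdots<\lambda_r$ is already part of the statement of Theorem~\ref{divergence} (the $\lambda_k$ are the \emph{distinct} limit values listed in increasing order), so your parenthetical re-derivation is unnecessary; and the clause ``the problem based on $X'_0$ is again in the divergent case'' is not needed, since Theorem~\ref{divergence} applies regardless (as the paper notes), and in fact fails when $\Gamma(a,b,X_0)\ne\emptyset$.
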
 

Aas mentions this fact (proposition~1 in~\cite{Aas}) as a result 
of Pretzel (last part of theorem~1 in \cite{Pretzel}), although Pretzel 
considers only the case where the set $\Gamma(a,b,X_0)$ is not empty. 
We prove theorem~\ref{modification of the initial matrix} by adapting
Pretzel's proof.

The set $\Sigma$ can be determined by applying theorem~\ref{criteria} 
(critical case, item~(c)) to the the marginals $a'$ and $b$. The interest of 
theorem~\ref{modification of the initial matrix}
is that starting from $X'_0$ ensures an at least geometric rate of 
convergence, since theorem~\ref{fast convergence}
applies when one performs the IPFP on the marginals $a'$ and $b$ 
(or $a$ and $b'$) and the initial matrix $X'_0$. 

That is why Aas investigates the inherent block structure. Actually, 
the splitting considered by Aas is finer that the splitting provided by our 
theorems~\ref{divergence} and~\ref{determining the partitions}; 
in the example above, Aas would split $I_1$ into $\{1,2\}$ and $\{3\}$, 
and $J_1$ into $\{1\}$ and $\{2\}$.
Up to this distinction, most of the statements provided by our 
theorems~\ref{divergence} and~\ref{determining the partitions}
are explicitely or implicitely present in Aas' paper, 
which focuses on an algorithmic point of view. 

The convergence of the sequences $(X_{2n})_{n \ge 0}$ and $(X_{2n+1})_{n \ge 0}$ 
was already established by Gietl and Reffel~\cite{Gietl - Reffel}
with the help of $I$-divergence. Our proof is completely different 
(although $I$-divergence helps us to determine the limit points). 
Our first step is to prove the convergence 
of the sequences $(R_i(X_{2n}))_{n \ge 0}$ and $(C_j(X_{2n+1}))_{n \ge 0}$ by 
exploiting recursion relations involving stochastic matrices. The 
proof relies on the next general result on infinite products of 
stochastic matrices. Theorem~\ref{infinite product of stochastic matrices}
below will only be used for the proof of theorems~\ref{divergence} 
and~\ref{determining the partitions}, so apart from 
lemma~\ref{decreasing the length}, section~\ref{infinite products} 
can be skipped if the reader is only interested by the new proof of 
theorems~\ref{criteria}, ~\ref{fast convergence} and ~\ref{slow convergence}.

A sequence $(x_n)_{n \ge 0}$ of vectors in some normed vector space 
$(E,||\cdot||)$ will be said to have a {\it finite variation} 
if the series $\sum_n ||x_{n+1}-x_n||$ converges. The finite variation 
implies the convergence when $(E,||\cdot||)$ is a Banach space, 
in particular when $E$ has a finite dimension. 

\begin{theo}\label{infinite product of stochastic matrices}
Let $(M_n)_{n \ge 1}$ be some sequence of $d \times d$ stochastic matrices. 
Assume that there exists some constants $\gamma>0$, and $\rho \ge 1$ 
such that for every $n \ge 1$ and $i,j$ in $\odc 1,d \fdc$, 
$M_n(i,i) \ge \gamma$ and $M_n(i,j) \le \rho M_n(j,i)$. 
Then the sequence $(M_n \cdots M_1)_{n \ge 1}$ has a finite variation, 
so it converges to some stochastic matrix $L$. Moreover, the series 
$\sum_n M_n(i,j)$ and $\sum_n M_n(j,i)$ converge whenever the rows of 
$L$ with indexes $i$ and $j$ are different. 
\end{theo}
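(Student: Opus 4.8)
The plan is to control the oscillation of the sequence $P_n := M_n\cdots M_1$ by a Lyapunov-type functional on the rows of $P_n$, exploiting the three hypotheses (diagonal entries bounded below, ratio condition $M_n(i,j)\le\rho M_n(j,i)$, stochasticity) to show each multiplication by $M_{n+1}$ can only decrease it, with a strict decrease that is summable. A natural candidate is some convex, permutation-symmetric quantity built from the rows $P_n(i,\cdot)$, e.g. $\Phi(P_n) = \sum_{i} a_i \,h(P_n(i,\cdot))$ for suitable positive weights $a_i$ and a strictly convex $h$; left multiplication by a stochastic matrix replaces each row of $P_n$ by a convex combination of rows of $P_n$, so $\Phi$ is non-increasing by Jensen. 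The ratio hypothesis is what lets us choose the weights $a_i$ consistently: one wants weights making $M_n$ behave like a ``reversible'' kernel up to the bounded factor $\rho$, so that the averaging step contracts differences between rows whose current row-vectors disagree. I would first set up this functional carefully, check it is bounded (it lives on stochastic matrices, a compact set), and extract from its monotone convergence that the \emph{increments} along rows that get genuinely mixed tend to zero fast.

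The key quantitative step: whenever two rows $i\ne j$ of $P_n$ are at distance $\ge\delta$ in, say, $\ell^1$, the entry $M_{n+1}(i,j)\ge\gamma'$ for some $\gamma'>0$ forces a definite drop in $\Phi$; here I would use $M_{n+1}(i,j) = 0 \Rightarrow M_{n+1}(j,i)=0$ (from the ratio bound) together with $M_{n+1}(i,i)\ge\gamma$ and stochasticity to argue that the off-diagonal mass on each row is controlled and that, if $M_{n+1}(i,j)>0$ for some pair of currently-distinct rows, it is in fact $\ge$ a universal constant times $\gamma$ — this is where the ratio bound really earns its keep, since it prevents arbitrarily small positive entries from being ``wasted'' asymmetrically. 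Summing the drops over $n$ gives $\sum_n \mathbf 1\{\text{row }i,j\text{ of }P_n\text{ far apart}\}\,M_{n+1}(i,j) < \infty$, and a dyadic/partition argument over the finitely many pairs $(i,j)$ upgrades this to: for each pair either the rows of $P_n$ with indices $i,j$ become and stay close, or $\sum_n M_{n+1}(i,j) < \infty$ (and symmetrically $\sum_n M_{n+1}(j,i)<\infty$ by the ratio bound). Grouping the row-indices into the equivalence classes of ``asymptotically equal rows'' then yields the last assertion of the theorem.

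Finite variation of $(P_n)$ itself is the remaining point. Write $P_{n+1} - P_n = (M_{n+1} - I)P_n$; row $i$ of this is $\sum_{j\ne i} M_{n+1}(i,j)\bigl(P_n(j,\cdot) - P_n(i,\cdot)\bigr)$, so $\|P_{n+1}-P_n\|_1 \le \sum_i\sum_{j\ne i} M_{n+1}(i,j)\,\|P_n(j,\cdot)-P_n(i,\cdot)\|_1$. For pairs $(i,j)$ with $\sum_n M_{n+1}(i,j)<\infty$ the corresponding terms are summable since the row-difference is bounded by $2$; for pairs whose rows become close, I need the stronger statement that $\sum_n M_{n+1}(i,j)\,\|P_n(j,\cdot)-P_n(i,\cdot)\|_1 < \infty$, which should come back out of the $\Phi$-drop estimate if $h$ is chosen strongly convex (quadratic), since then the drop at step $n$ dominates a constant times $\sum_{i,j} a_i M_{n+1}(i,j)\,\|P_n(j,\cdot)-P_n(i,\cdot)\|_2^2$, and one interpolates with the bounded $\ell^1$ diameter. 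I expect this interpolation/bookkeeping — reconciling the $\ell^2$ drop with the $\ell^1$ variation while the entries $M_{n+1}(i,j)$ are not individually bounded below — to be the main obstacle; the ratio hypothesis and a Cauchy--Schwarz split ($M_{n+1}(i,j)\,\|\cdot\|_1 \le (M_{n+1}(i,j)\,\|\cdot\|_2^2)^{1/2}\cdot(M_{n+1}(i,j))^{1/2}\sqrt d$, then sum using $\sum_n M_{n+1}(i,j)<\infty$ on the relevant pairs and the summable drops on the others) is the route I would push through. Lemma~\ref{decreasing the length}, referenced just below, is presumably the combinatorial device for the partition-into-classes step, and I would lean on it there.
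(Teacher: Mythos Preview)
Your Lyapunov functional $\Phi(P)=\sum_i a_i\,h(P(i,\cdot))$ is not monotone under left multiplication by a merely stochastic matrix. Jensen gives $h(P_{n+1}(i,\cdot))\le\sum_k M_{n+1}(i,k)\,h(P_n(k,\cdot))$, and after summing with weights $a_i$ you need $\sum_i a_i M_{n+1}(i,k)\le a_k$ for all $k$, i.e.\ $a$ must be sub-invariant for $M_{n+1}^\top$. No fixed choice of $a$ achieves this for all $n$, since the $M_n$ vary and are not doubly stochastic; the ratio bound only gives $\sum_i M_{n+1}(i,k)\le\rho$, hence $\Phi(P_{n+1})\le\rho\,\Phi(P_n)$, which is useless for $\rho>1$. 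So the ``averaging decreases $\Phi$'' step fails at the outset.

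Two further gaps. First, the claim that a positive off-diagonal entry $M_{n+1}(i,j)$ must exceed a universal multiple of $\gamma$ is false: in the paper's own example $M(r)=\frac12\left(\begin{smallmatrix}1+r&1-r\\1-r&1+r\end{smallmatrix}\right)$ the hypotheses hold with $\gamma=1/2$, $\rho=1$, yet $M(r)(1,2)=(1-r)/2$ is arbitrarily small. The ratio bound forbids asymmetry, not smallness. Second, your Cauchy--Schwarz split at the end does not close: for a pair $(i,j)$ whose rows of $P_n$ eventually coincide you would need both $\sum_n M_{n+1}(i,j)\,\|P_n(j,\cdot)-P_n(i,\cdot)\|_2^2<\infty$ and $\sum_n M_{n+1}(i,j)<\infty$, but the second sum can perfectly well diverge for such pairs. (Also, Lemma~\ref{decreasing the length} is just the elementary bound $\mathrm{diam}(MV)\le(1-2\underline{M})\,\mathrm{diam}(V)$; it is not a partition device.)

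The paper takes a genuinely different route. It works one column $V$ at a time, setting $V_n=M_n\cdots M_1V$, and studies the \emph{sorted} vector $V_n^\uparrow$. The key combinatorial fact (Lemmas~\ref{using the diagonal entries}--\ref{upper bounds} and Corollary~\ref{variations}) is that for each $m$ and suitably large $s$ the weighted partial sum $\sum_{i=1}^m s^{-i}V_n^\uparrow(i)$ is non-decreasing in $n$; this is where both the diagonal lower bound and the ratio hypothesis enter, via a careful analysis of the permutation relating the orderings of $V_n$ and $V_{n+1}$. Monotonicity plus boundedness gives finite variation of $(V_n^\uparrow)$, and a second inequality $\|V_{n+1}-V_n\|_1\le C\,\|V_{n+1}^\uparrow-V_n^\uparrow\|_1$ (same corollary) transfers this to $(V_n)$ itself. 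The series $\sum_n M_n(i,j)$ are then controlled by comparing $V_{n+1}^\uparrow-V_n^\uparrow$ with the mass $M_{n+1}$ sends across the gap between two distinct limit values.
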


An important literature deals with infinite products of stochastic 
matrices, with various motivations: study of inhomogeous Markov chains, 
of opinion dynamics... See for example~\cite{Touri}. 
Backward infinite products converge much more often than forward 
infinite products. Many theorems involve the 
ergodic coefficients of stochastic matrices. For a $d \times d$ 
stochastic matrix $M$, the ergodic coefficient is the quantity
$$\tau(M) = \min_{1 \le i,i' \le d} \sum_{j=1}^d \min(M(i,j),M(i',j)) \in [0,1].$$ 
The difference $1-\tau(M)$ is the maximal total variation distance 
between the lines of $M$ seen as probablities on $\odc 1,d \fdc$. 
These theorems do not apply in our context. 

To our knowledge, theorem~\ref{infinite product of stochastic matrices} 
is new. The closest statements we found in the literature are 
Lorenz' stabilization theorem (theorem~2 of~\cite{Lorenz}) and 
a theorem of Touri and Nedi\'c on infinite product of bistochastic 
matrices (theorem~7 of~\cite{Touri - Nedic}, relying on 
theorem~6 of~\cite{Touri - Nedic (random)}).
 The method we use
to prove theorem~\ref{infinite product of stochastic matrices} is 
different of theirs. 

On the one hand, theorem~\ref{infinite product of stochastic matrices} 
provides a stronger conclusion (namely finite variation and not only 
convergence) and has weaker assumptions than Lorenz' stabilization theorem. 
Indeed, Lorenz assumes that each $M_n$ has a positive diagonal and a symmetric 
support, and that the entries of all matrices $M_n$ are bounded below 
by some $\delta>0$; this entails the assumptions of our 
theorem~\ref{infinite product of stochastic matrices},
with $\gamma = \delta$ and $\rho = \delta^{-1}$. 

On the other hand, Lorenz' stabilization theorem gives more 
precisions on the limit $L = \lim_{n \to +\infty} M_n \ldots M_1$. 
In particular, if the support of $M_n$ does not depends on $n$, 
then Lorenz shows that by applying a same permutation on the 
rows and on the columns of $L$, one gets a block-diagonal matrix 
in which each diagonal block is a consensus matrix, namely a 
stochastic matrix whose rows are all the same.  
This additional conclusion does not hold anymore under our weaker assumptions. 
For example, for every $r \in [-1,1]$, consider the stochastic matrix 
$$M(r) = \frac12
\left(\begin{array}{cc}
1+r & 1-r \\
1-r & 1+r
\end{array}\right).$$
One checks that for every $r_1$ and $r_2$ in $[-1,1]$, 
$M(r_2)M(r_1) = M(r_2r_1)$.
Let $(r_n)_{n \ge 1}$ be any sequence of numbers in $]0,1]$ 
whose infinite product converges to some $\ell>0$. 
Then our assumptions hold with $\gamma = 1/2$ and $\rho = 1$ and 
the matrices $M(r_n)$ have the same support. Yet, the limit of
the products $M(r_n) \cdots M(r_1)$, namely $M(\ell)$, has only 
positive coefficients and is not a consensus matrix. 

Note also that, given an arbitrary sequence $(M_n)_{n \ge 1}$ of 
stochastic matrices, assuming only that the diagonal 
entries are bounded away from $0$ does not ensure the 
convergence of the infinite product $\cdots M_2M_1$. 
Indeed, consider the triangular stochastic matrices 
$$T_0 = 
\left(\begin{array}{ccc}
1 & 0 & 0 \\
0 & 1 & 0 \\
0&1/2&1/2
\end{array}\right),~
T_1 = 
\left(\begin{array}{ccc}
1 & 0 & 0 \\
0 & 1 & 0 \\
1/2&0&1/2
\end{array}\right).$$
A recursion shows that for every $n \ge 1$ and 
$(\ep_1,\ldots,\ep_n) \in \{0,1\}^n$, 
$$T_{\ep_n} \cdots T_{\ep_1} = 
\left(\begin{array}{ccc}
1 & 0 & 0 \\
0 & 1 & 0 \\
r&1-2^{-n}-r&2^{-n}
\end{array}\right), \text{ where } 
r = \sum_{k=1}^n \frac{\ep_k}{2^{n+1-k}}.$$
Hence, one sees that the infinite product 
$\cdots T_0T_1T_0T_1T_0T_1$ diverges.

Yet, for a sequence of {\it doubly-stochastic} matrices, it is 
sufficient to assume that the diagonal entries are bounded away from $0$. 
This result was proved by Touri and Nedi\'c 
(theorem~5 of~\cite{Touri - Nedic (alternative)} or theorem~7 
of~\cite{Touri - Nedic}, relying on 
theorem~6 of~\cite{Touri - Nedic (random)}). We provide a simpler
proof and a slight improvement, showing that the sequence 
$(M_n \ldots M_1)_{n \ge 1}$ not only converges but also has a 
finite variation.

\begin{theo}\label{infinite product of doubly-stochastic matrices}
Let $(M_n)_{n \ge 1}$ be some sequence of $d \times d$ 
doubly-stochastic matrices. 
Assume that there exists some constant $\gamma>0$
such that for every $n \ge 1$ and $i$ in $\odc 1,d \fdc$, 
$M_n(i,i) \ge \gamma$. 
Then the sequence $(M_n \ldots M_1)_{n \ge 1}$ has a finite variation, 
so it converges to some stochastic matrix $L$. Moreover, the series 
$\sum_nM_n(i,j)$ and $\sum_nM_n(j,i)$ converge whenever the rows of 
$L$ with indexes $i$ and $j$ are different. 
\end{theo}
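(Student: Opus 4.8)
The plan is to run the classical $\ell^2$-energy (Lyapunov) argument for backward products of doubly-stochastic matrices, which gives convergence together with \emph{square-summable} increments, and then to upgrade square-summability to genuine finite variation by an induction on the dimension $d$, exploiting the block structure that emerges in the limit. The ``moreover'' part will fall out of the energy inequality once convergence is known.

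First I would reduce to tracking vectors. Since the columns of $Q_n:=M_n\cdots M_1$ are the vectors $Q_nv$ for $v$ in the canonical basis of $\rrf^d$, and $Q_n$ is again doubly-stochastic, it is enough to control $x_n:=Q_nv$ for arbitrary $v\in\rrf^d$, together with the differences of the rows of $Q_n$. Fix $v$, set $\bar v=d^{-1}\sum_iv_i$. Each $M_n$ fixes $\one$ on the right and $\one^\top$ on the left, hence preserves $\bar v$ and contracts $\|\cdot\|_2$, so $V_n:=\|x_n-\bar v\one\|_2^2=\|x_n\|_2^2-d\bar v^2$ is non-increasing; expanding $x_n(i)=\sum_jM_n(i,j)x_{n-1}(j)$ and using the column sums $\sum_iM_n(i,j)=1$ yields the exact decrement
\[
V_{n-1}-V_n=\sum_{i=1}^d\sum_{j<k}M_n(i,j)M_n(i,k)\,\bigl(x_{n-1}(j)-x_{n-1}(k)\bigr)^2 .
\]
In the inner sum, keeping only the pairs $\{i,l\}$ containing the index $i$ and bounding one factor by $M_n(i,i)\ge\gamma$, this gives on one hand $V_{n-1}-V_n\ge\gamma\sum_i\sum_{l\ne i}M_n(i,l)(x_{n-1}(i)-x_{n-1}(l))^2$ and, via Jensen, on the other hand $V_{n-1}-V_n\ge\gamma\|x_n-x_{n-1}\|_2^2$. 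Summing over $n$ and over $v$ in a basis, I get $\sum_n\|Q_n-Q_{n-1}\|^2<\infty$ and, crucially,
\[
\sum_n\sum_{i\ne l}M_n(i,l)\,\bigl\|Q_{n-1}(i,\cdot)-Q_{n-1}(l,\cdot)\bigr\|^2<\infty .
\]

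From the displayed inequality I would extract an ``infinite-flow'' dichotomy: for each ordered pair $(i,l)$, either $\sum_nM_n(i,l)<\infty$, or the rows $Q_n(i,\cdot),Q_n(l,\cdot)$ come arbitrarily close infinitely often. Declaring $i\sim l$ in the second case and taking the transitive closure partitions $\odc1,d\fdc$ into classes; since the increments of $Q_n$ tend to $0$ the set of limit points is connected, and two rows in the same class must agree at every limit point, so $Q_n$ converges to a stochastic matrix $L$ whose rows $i$ and $l$ coincide exactly when $i\sim l$. If rows $i,l$ of $L$ differ, then $\|Q_{n-1}(i,\cdot)-Q_{n-1}(l,\cdot)\|$ is bounded below for large $n$, so the displayed inequality (which contains both the $M_n(i,l)$ and the $M_n(l,i)$ terms) forces $\sum_n M_n(i,l)+\sum_n M_n(l,i)<\infty$: this is the ``moreover'' conclusion.

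The real work, and the main obstacle, is turning convergence into finite variation, because the energy argument only produces square-summable increments. Here I would induct on $d$ using the classes above: the entries $M_n(i,l)$ with $i\not\sim l$ form a summable family, so up to a finite-variation perturbation — this is where a lemma of the type~\ref{decreasing the length} does real work, by letting one absorb a summable-norm perturbation of the factors without destroying finite variation — one may assume each $M_n$ is block-diagonal along the classes, reducing to diagonal blocks which are backward products of smaller doubly-stochastic matrices with diagonal $\ge\gamma$ whose limit is a consensus matrix $J=d'^{-1}\one\one^\top$. In this irreducible base case one must prove $\sum_n\|Q_n-Q_{n-1}\|<\infty$ where $E_n:=\|Q_n-J\|\searrow0$; writing $Q_n-Q_{n-1}=(M_n-I)(Q_{n-1}-J)$ and $Q_n-J=M_n(Q_{n-1}-J)$, the energy estimate only gives $\|Q_n-Q_{n-1}\|^2\le\gamma^{-1}(E_{n-1}^2-E_n^2)$, which is too lossy (already in $d=2$ the truth is $\|Q_n-Q_{n-1}\|\le C(\gamma)(E_{n-1}-E_n)$, telescoping, but this fails pointwise for $d\ge3$ because the step at time $n$ is driven by the component of $Q_{n-1}-J$ in the ``active'' directions of $M_n$ while the energy dissipated is quadratic in that component). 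The fix is a quantitative-ergodicity argument: group the indices $n$ into epochs over which the active directions sweep out all of $\one^\perp$, establish the aggregate bound $\sum_{n\in\text{epoch}}\|Q_n-Q_{n-1}\|\le C(\gamma,d)(E_{\text{start}}-E_{\text{end}})$ with a uniform energy contraction per epoch, and sum the resulting geometric series. Making the epochs and the uniform contraction precise is the technical heart of the proof; the base case $d=1$ is trivial, which closes the induction.
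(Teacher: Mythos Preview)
Your $\ell^2$-energy computation is correct and it does deliver convergence of $Q_n=M_n\cdots M_1$ together with the ``moreover'' clause: once you know $Q_n\to L$ and rows $i,l$ of $L$ differ, the inequality
\[
\sum_n\sum_{i\ne l}M_n(i,l)\,\|Q_{n-1}(i,\cdot)-Q_{n-1}(l,\cdot)\|^2<\infty
\]
forces $\sum_n M_n(i,l)<\infty$ and $\sum_n M_n(l,i)<\infty$. That part is fine (it is essentially the Touri--Nedi\'c argument).

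The genuine gap is the upgrade from square-summable increments to finite variation. Your epoch scheme needs, simultaneously, epochs of bounded length and a \emph{uniform} energy contraction per epoch, and neither is available. Take $d=3$ and let $M_n$ alternately mix coordinates $(1,2)$ and $(2,3)$ with off-diagonal weight $\varepsilon_n=1/n$; the diagonal stays $\ge 1/2$, the flow is infinite so the limit is consensus, yet the contraction factor of $M_{2k}M_{2k-1}$ on $\one^\perp$ tends to $1$ as $k\to\infty$. If instead you define epochs by halving $E_n$, they become arbitrarily long and Cauchy--Schwarz reinserts the unbounded factor $\sqrt{|\text{epoch}|}$. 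Your own diagnosis is right: from the $\ell^2$ decrement one only gets $\|Q_n-Q_{n-1}\|\le C\sqrt{E_{n-1}-E_n}$, and for $d\ge 3$ the displacement can exceed the $\ell^2$-radius drop by an arbitrarily large factor (take $y$ mostly in the kernel of $I-M_n$ on $\one^\perp$). So the last paragraph does not close.

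The paper avoids this entirely by changing the Lyapunov function from $\ell^2$ to an $\ell^1$-type quantity. For $V\in\rrf^d$ set the \emph{dispersion}
\[
D(V)=\sum_{1\le i,j\le d}|V(i)-V(j)|.
\]
The key lemma (Lemma~\ref{dispersion}) is that for any doubly-stochastic $M$ with $M(i,i)\ge\gamma$ and any $V$,
\[
D(V)-D(MV)\ \ge\ \gamma\,\|MV-V\|_1 .
\]
This is a one-line computation: write $D(V)-D(MV)=\sum_{i,j}\Delta(i,j)$ with each $\Delta(i,j)\ge 0$, and bound $\Delta(i,i)\ge\gamma|\,(MV)(i)-V(i)\,|$. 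Applied to $V_n=M_n\cdots M_1V$ and summed, it gives $\sum_n\|V_{n+1}-V_n\|_1\le\gamma^{-1}D(V)<\infty$ directly, with no induction on $d$, no block reduction, and no epochs. The ``moreover'' part then uses a monotonicity of partial sums of the sorted vector (Lemma~\ref{increasing sums}), playing the role your displayed $\ell^2$ inequality plays in your argument. The moral: for finite variation the right potential here is $\ell^1$, not $\ell^2$.
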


Our proof relies on the following fact: for every column vector 
$V \in \rrf^d$, set 
$$D(V) = \sum_{1 \le i,j \le d}|V(i)-V(j)| \text{ and } 
||V||_1 = \sum_{1 \le i \le d}|V(i)|.$$
We will call {\it dispersion of} $V$ the quantity $D(V)$. 
Then, under the assumptions of 
theorem~\ref{infinite product of doubly-stochastic matrices},
the inequality
$$\gamma||M_{n+1} \cdots M_1V - M_n \cdots M_1V||_1 
\le D(M_n \cdots M_1V) - D(M_{n+1} \cdots M_1V).$$
holds for every $n \ge 0$.


\eject

\section{Infinite products of stochastic matrices}\label{infinite products} 

\subsection{Proof of theorem~\ref{infinite product of stochastic matrices}}

We begin with an elementary lemma. 

\begin{lemm}\label{decreasing the length}
Let $M$ be any $m \times n$ stochastic matrix and $V \in \rrf^n$ 
be a column vector. Denote by $\underline{M}$ the smallest entry of $M$, 
by $\underline{V}$, $\overline{V}$ and 
$\diam(V) = \overline{V} - \underline{V}$ 
the smallest entry, the largest entry and the diameter of $V$. 
Then  
$$\underline{MV}
\ge (1-\underline{M})~\underline{V} + \underline{M}~\overline{V}
\ge \underline{V},$$ 
$$\overline{MV}
\le \underline{M}~\underline{V} + (1-\underline{M})~\overline{V}
\le \overline{V},$$
so 
$$\diam(MV) \le (1-2\underline{M})\diam(V).$$ 
\end{lemm}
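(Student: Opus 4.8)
The plan is to prove the lower bound on $\underline{MV}$ entry by entry; the upper bound on $\overline{MV}$ is the mirror image (apply the lower bound to $-V$, whose smallest entry is $-\overline V$ and largest entry is $-\underline V$), and the diameter bound is immediate by subtracting the two. So the whole content is the single inequality
$$(MV)(i) \ge (1-\underline M)\,\underline V + \underline M\,\overline V \qquad \text{for every row index } i.$$
First I would fix a row $i$ and write $(MV)(i) = \sum_{j} M(i,j) V(j)$, a convex combination of the values $V(j)$ since $M$ is stochastic. The naive bound $\sum_j M(i,j)V(j) \ge \underline V$ is too weak; the point is that at least a mass $\underline M$ sits on \emph{some} coordinate, and every coordinate value is at least $\underline V$, so we want to extract an "$\underline M$ worth of $\overline V$" from the convex combination.

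The clean way: pick an index $j_0$ achieving $V(j_0) = \overline V$. Then
$$(MV)(i) = M(i,j_0)\overline V + \sum_{j \ne j_0} M(i,j) V(j) \ge M(i,j_0)\overline V + \Bigl(\sum_{j\ne j_0} M(i,j)\Bigr)\underline V = M(i,j_0)\overline V + (1-M(i,j_0))\underline V.$$
Now the right-hand side is a nondecreasing affine function of $M(i,j_0)$ (its slope is $\overline V - \underline V = \diam(V) \ge 0$), and $M(i,j_0) \ge \underline M$, so it is bounded below by $\underline M\,\overline V + (1-\underline M)\underline V$, which is exactly the claimed bound. Taking the minimum over $i$ gives $\underline{MV} \ge (1-\underline M)\underline V + \underline M\,\overline V$, and since $\overline V \ge \underline V$ this is $\ge \underline V$.

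For the upper bound, apply what was just proved to the vector $-V$: its smallest entry is $-\overline V$, its largest is $-\underline V$, so $\underline{M(-V)} \ge (1-\underline M)(-\overline V) + \underline M(-\underline V)$, i.e.\ $-\overline{MV} \ge -\bigl(\underline M\,\underline V + (1-\underline M)\overline V\bigr)$, giving $\overline{MV} \le \underline M\,\underline V + (1-\underline M)\overline V \le \overline V$. Finally,
$$\diam(MV) = \overline{MV} - \underline{MV} \le \bigl(\underline M\,\underline V + (1-\underline M)\overline V\bigr) - \bigl((1-\underline M)\underline V + \underline M\,\overline V\bigr) = (1-2\underline M)(\overline V - \underline V) = (1-2\underline M)\diam(V).$$
There is no real obstacle here; the only thing to be slightly careful about is the monotonicity step (that replacing $M(i,j_0)$ by the smaller value $\underline M$ decreases the bound) which uses $\overline V - \underline V \ge 0$, and the symmetric handling of the upper bound via $-V$ rather than redoing the argument. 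One should also note $\underline M \le 1/d \le 1/2$ so that $1-2\underline M \ge 0$ and the diameter bound is a genuine contraction factor, though the inequality as stated holds regardless of sign.
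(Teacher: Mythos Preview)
Your proof is correct and essentially identical to the paper's: both isolate an index $j_0$ (the paper calls it $j_2$) where $V$ attains its maximum, bound the remaining convex combination below by $\underline V$, then use $M(i,j_0)\ge\underline M$ together with $\overline V-\underline V\ge 0$; the upper bound via $-V$ and the subtraction for the diameter are also handled the same way. The only slip is your closing parenthetical ``$\underline M\le 1/d\le 1/2$'': the matrix is $m\times n$, so this should read $\underline M\le 1/n$, and it requires $n\ge 2$ (the paper notes this caveat just after its proof).
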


\begin{proof}
Call $M(i,j)$ the entries of $M$ and $V(1),\ldots,V(n)$ the entries of $V$.
Let $j_1$ and $j_2$ be indexes such that 
$V(j_1)=\underline{V}$ and $V(j_2)=\overline{V}$. 
Then for every $i \in \odc 1,m \fdc$, 
\begin{eqnarray*}
(MV)_i 
&=& \sum_{j \ne j_2} M(i,j)V(j) + M(i,j_2)V(j_2) \\
&\ge& \sum_{j \ne j_2} M(i,j)\underline{V} + M(i,j_2)\overline{V} \\
&=& \underline{V} + M(i,j_2)(\overline{V}-\underline{V}) \\
&\ge& \underline{V} + \underline{M}~(\overline{V}-\underline{V})\\
&\ge& \underline{V}.
\end{eqnarray*}
The first inequality follows. Applying it to $-V$ yields the second 
inequality. 
\end{proof}

The interesting case is when $n \ge 2$, so $\underline{M} \le 1/2$ 
and $1-2\underline{M} \ge 0$. Yet, the lemma and the proof 
above still apply when $n=1$, since   
$\overline{MV}=\underline{MV}=\overline{V}=\underline{V}$
in this case. 

We now restrict ourselves to square stochastic matrices. 
To every column vector $V \in \rrf^d$, we associate the 
column vector $V^\uparrow \in \rrf^d$ obtained by ordering 
the components in non-decreasing order. In particular 
$V^\uparrow(1) = \underline{V}$ and $V^\uparrow(d) = \overline{V}$.

In the next lemmas and corollary, we establish inequalities 
that will play a key role in the proof of 
theorem~\ref{infinite product of stochastic matrices}.  

\begin{lemm}\label{using the diagonal entries}
Let $M$ be some $d \times d$ stochastic matrix with diagonal entries
bounded below by some constant $\gamma>0$, and $V \in \rrf^d$ 
be a column vector with components in increasing order 
$V(1) \le \ldots \le V(d)$. Let $\sigma$ be a permutation of 
$\odc 1,d \fdc$ such that 
$(MV)(\sigma(1)) \le \ldots \le (MV)(\sigma(d))$.
For every $i \in \odc 1,d \fdc$, set 
$$A_i = \sum_{j=1}^{i-1} M(\sigma(i),j)~[V(i)-V(j)],~
B_i = \sum_{j=i+1}^d M(\sigma(i),j)~[V(j)-V(i)],$$
with the natural conventions $A_1 = B_d = 0$.  
The following statements hold. 
\begin{enumerate}
\item For every $i \in \odc 1,d \fdc$, $(MV)^\uparrow(i)-V^\uparrow(i) = B_i-A_i$.
\item All the terms in the sums defining $A_i$ and $B_i$ are non-negative. 
\item\label{diagonal entry} 
$B_i \ge M(\sigma(i),\sigma(i))~[V(\sigma(i))-V(i)] 
\ge \gamma~[V(\sigma(i))-V(i)]$ whenever $i<\sigma(i)$.
\item\label{orbit} Let $a<b$ in $\odc 1,d \fdc$. 
If the orbit $O(a)$ of $a$ associated to the permutation $\sigma$ 
contains some integer at least equal to $b$, then 
\begin{equation*}
V(b)-V(a) \le \gamma^{-1} \sum_{i \in O(a) \cap \odc 1,b-1 \fdc} B_i\one_{[i < \sigma(i)]}
\le \gamma^{-1} \sum_{i \in O(a) \cap \odc 1,b-1 \fdc} B_i.
\end{equation*}
\item One has
$$\sum_{i=1}^d \big|V(\sigma(i))-V(i)\big| 
\le 2\gamma^{-1}\sum_{i=1}^d B_i \one_{[i < \sigma(i)]}
\le 2\gamma^{-1}\sum_{i=1}^{d-1} B_i.$$
\end{enumerate}
\end{lemm}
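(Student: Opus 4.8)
The first two items are pure bookkeeping. Since $V$ is already non-decreasing we have $V^\uparrow=V$, and since $M$ is stochastic its rows sum to $1$, so
$$(MV)^\uparrow(i)-V^\uparrow(i)=(MV)(\sigma(i))-V(i)=\sum_{j=1}^{d}M(\sigma(i),j)\,\big[V(j)-V(i)\big];$$
splitting the sum into $j<i$, $j=i$, $j>i$ and noting the middle term is $0$ gives item~1. Item~2 is then immediate, because $M\ge 0$ entrywise while $V(i)-V(j)\ge 0$ for $j<i$ and $V(j)-V(i)\ge 0$ for $j>i$ by monotonicity of $V$. For item~3, if $i<\sigma(i)$ then the index $j=\sigma(i)$ occurs in the sum defining $B_i$; discarding the remaining terms, which are non-negative by item~2, leaves $B_i\ge M(\sigma(i),\sigma(i))\,[V(\sigma(i))-V(i)]$, and we conclude with $M(\sigma(i),\sigma(i))\ge\gamma$ and $V(\sigma(i))-V(i)\ge 0$.

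The real content is item~4, which I would obtain by climbing along the $\sigma$-orbit through $a$. Because $a<b$, a fixed point $a$ could not meet $\odc b,d \fdc$, so $O(a)$ is a $\sigma$-cycle of length $m\ge 2$ and carries no fixed point. Let $k\ge 1$ be the smallest integer with $\sigma^{k}(a)\ge b$; it exists by hypothesis, and since $\sigma^{m}(a)=a<b$ it satisfies $k\le m-1$, so $i_{0}:=a,\;i_{1}:=\sigma(a),\dots,i_{k-1}:=\sigma^{k-1}(a)$ are pairwise distinct, all lie in $O(a)\cap\odc 1,b-1 \fdc$, and telescoping gives
$$V(\sigma^{k}(a))-V(a)=\sum_{l=0}^{k-1}\big[V(\sigma(i_{l}))-V(i_{l})\big].$$
Each summand with $i_{l}>\sigma(i_{l})$ is $\le 0$, and each summand with $i_{l}<\sigma(i_{l})$ is $\le\gamma^{-1}B_{i_{l}}$ by item~3, so the telescoped quantity is $\le\gamma^{-1}\sum_{l:\,i_{l}<\sigma(i_{l})}B_{i_{l}}$. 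The $i_{l}$ appearing here are distinct elements of $O(a)\cap\odc 1,b-1 \fdc$ with $i_l<\sigma(i_l)$, and the $B_i$ are non-negative, so this is at most $\gamma^{-1}\sum_{i\in O(a)\cap\odc 1,b-1 \fdc}B_i\one_{[i<\sigma(i)]}$. Finally $V(b)\le V(\sigma^{k}(a))$ since $V$ is non-decreasing and $b\le\sigma^{k}(a)$, and dropping the indicator only enlarges the bound, yielding both displayed inequalities. The one delicate point is the combinatorics of the cycle: one must check that the minimal $k$ satisfies $k\le m-1$ so that $i_0,\dots,i_{k-1}$ do not repeat, and that all of them stay in $\odc 1,b-1 \fdc$, which is exactly what legitimizes replacing the sum over $l$ by a sum over the set $O(a)\cap\odc 1,b-1 \fdc$.

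For item~5 I would split $\sum_{i=1}^{d}|V(\sigma(i))-V(i)|$ according to the sign of $\sigma(i)-i$, fixed points contributing nothing. Over $\{i:i<\sigma(i)\}$, item~3 gives $|V(\sigma(i))-V(i)|=V(\sigma(i))-V(i)\le\gamma^{-1}B_i$, hence a bound $\gamma^{-1}\sum_{i=1}^{d}B_i\one_{[i<\sigma(i)]}$. For $\{i:i>\sigma(i)\}$ I would use that $\sigma$ is a permutation, so $\sum_{i=1}^{d}\big[V(\sigma(i))-V(i)\big]=0$, whence $\sum_{i:\,i>\sigma(i)}\big[V(i)-V(\sigma(i))\big]=\sum_{i:\,i<\sigma(i)}\big[V(\sigma(i))-V(i)\big]$, and the right-hand side is again $\le\gamma^{-1}\sum_{i=1}^{d}B_i\one_{[i<\sigma(i)]}$. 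Adding the two halves produces the factor $2$, and the last inequality $\sum_{i=1}^{d}B_i\one_{[i<\sigma(i)]}\le\sum_{i=1}^{d-1}B_i$ follows from $B_i\ge 0$ and $B_d=0$.
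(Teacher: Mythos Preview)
Your proof is correct and follows essentially the same route as the paper's. The arguments for items~1--3 and~5 match the paper almost verbatim (the paper condenses item~5 into the single equality $\sum_i|V(\sigma(i))-V(i)|=2\sum_i(V(\sigma(i))-V(i))\one_{[i<\sigma(i)]}$, which is exactly your two-half split). For item~4 the paper introduces $n(a,b)=\min\{n\ge1:\sigma^n(a)\ge b\}$ and telescopes just as you do; your treatment is in fact slightly more explicit, since you spell out why $i_0,\dots,i_{k-1}$ are distinct and all lie in $\odc 1,b-1\fdc$, a point the paper leaves implicit when passing from the sum over $k$ to the sum over $O(a)\cap\odc 1,b-1\fdc$.
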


\begin{proof}
By assumption, 
$$(MV)^\uparrow(i)-V^\uparrow(i) = 
(MV)(\sigma(i))-V(i) = \sum_{j=1}^d M(\sigma(i),j)~[V(j)-V(i)]
= -A_i+B_i,$$
which yields the first item. The next two items follow directly 
from the assumptions $V(1) \le \ldots \le V(d)$ and 
$M(j,j) \ge \gamma$ for every $j \in \odc 1,d \fdc$.  

Under the assumptions of item~\ref{orbit}, the integer 
$n(a,b) = \min\{n \ge 1 : \sigma^n(a) \ge b\}$ is well-defined and  
\begin{eqnarray*}
V(b)-V(a) 
&\le& V(\sigma^{n(a,b)}(a))-V(a) \\
&\le& \sum_{k=0}^{n(a,b)-1} [V(\sigma^{k+1}(a))-V(\sigma^k(a))]
\one_{\sigma^k(a) < \sigma^{k+1}(a)} \\
&\le& \gamma^{-1} \sum_{k=0}^{n(a,b)-1} B_{\sigma^k(a)} \one_{\sigma^k(a) < \sigma^{k+1}(a)}
\end{eqnarray*}
by item~\ref{diagonal entry}. Item~\ref{orbit} follows. 

Since the sum of $V(\sigma(i))-V(i)$ 
over all $i \in \odc 1,d \fdc$ is null 
and since $V(1) \le \ldots \le V(d)$, one has
$$\sum_{i=1}^d \big|V(\sigma(i))-V(i)\big| = 
2 \sum_{i=1}^d \big(V(\sigma(i))-V(i)\big) \one_{[i < \sigma(i)]}
\le 2\gamma^{-1}\sum_{i=1}^{d-1} B_i \one_{[i < \sigma(i)]},$$
by item~\ref{diagonal entry}. The proof is complete.
%
\end{proof}

We denote by $||\cdot||_1$ the norm on $\rrf^d$ defined as the sum 
of the absolute values of the components.  

\begin{lemm}\label{upper bounds}
Keep the assumptions and the notations of 
lemma~\ref{using the diagonal entries}.
Assume that there exists some constant $\rho \ge 1$ 
such that for every $n \ge 1$ and $i,j$ in $\odc 1,d \fdc$, 
$M(i,j) \le \rho M(j,i)$. Set $C = d(d-1)\max(\gamma^{-1},\rho)$. 
Then the following statements hold
\begin{enumerate}
\item\label{upper bound for $A_i$} 
For every $i \in \odc 1,d \fdc$, 
$A_i \le (i-1)\max(\gamma^{-1},\rho)(B_1+\ldots+B_{i-1})$.
\item\label{upper bound for sums of $B_i$} 
For every $m \in \odc 1,d \fdc$, 
$B_1 + \ldots + B_m \le (1+C+\cdots+C^{m-1}) ||(MV)^\uparrow-V^\uparrow||_1$.
\end{enumerate}
\end{lemm}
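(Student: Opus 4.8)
The plan is to prove item~\ref{upper bound for $A_i$} first, and then to deduce item~\ref{upper bound for sums of $B_i$} from it by induction on $m$. For item~\ref{upper bound for $A_i$} the first step is a telescoping rewrite: substituting $V(i)-V(j)=\sum_{l=j}^{i-1}(V(l+1)-V(l))$ into $A_i=\sum_{j=1}^{i-1}M(\sigma(i),j)\,(V(i)-V(j))$ and exchanging the two summations turns $A_i$ into $\sum_{l=1}^{i-1}\mu_l\,(V(l+1)-V(l))$, where $\mu_l:=\sum_{j=1}^{l}M(\sigma(i),j)\le 1$. It then suffices to bound each of the $i-1$ terms $\mu_l\,(V(l+1)-V(l))$ by $\max(\gamma^{-1},\rho)\,(B_1+\cdots+B_{i-1})$ and to add the resulting inequalities.

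To estimate a single term $\mu_l\,(V(l+1)-V(l))$ I would split according to whether the sorting permutation $\sigma$ of $MV$ mixes the index blocks $\{1,\ldots,l\}$ and $\{l+1,\ldots,d\}$. If there is some $l'\le l$ with $\sigma(l')\ge l+1$, then $l'<\sigma(l')$, so item~\ref{diagonal entry} of lemma~\ref{using the diagonal entries} gives $B_{l'}\ge\gamma\,(V(\sigma(l'))-V(l'))\ge\gamma\,(V(l+1)-V(l))$; since $l'\le l$ and $\mu_l\le 1$ this yields $\mu_l(V(l+1)-V(l))\le\gamma^{-1}(B_1+\cdots+B_l)$, and only the lower bound on the diagonal entries has been used. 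Otherwise $\sigma$ stabilizes both blocks, hence so does $\sigma^{-1}$, and since $l\le i-1$ forces $i\ge l+1$ we get $\sigma(i)\ge l+1$; for each $j\le l$, putting $k_j:=\sigma^{-1}(j)\le l<\sigma(i)$, so that $B_{k_j}=\sum_{s>k_j}M(j,s)\,(V(s)-V(k_j))$, the index $s=\sigma(i)$ appears in this termwise non-negative sum, whence $B_{k_j}\ge M(j,\sigma(i))\,(V(\sigma(i))-V(k_j))\ge M(j,\sigma(i))\,(V(l+1)-V(l))$. Summing over $j\le l$ (the map $j\mapsto k_j$ being a bijection of $\{1,\ldots,l\}$) and then invoking the ratio bound $M(\sigma(i),j)\le\rho\,M(j,\sigma(i))$ gives $\mu_l(V(l+1)-V(l))\le\rho\,(B_1+\cdots+B_l)$, this time using only the ratio hypothesis. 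Either way the term is $\le\max(\gamma^{-1},\rho)(B_1+\cdots+B_{i-1})$, and summation over $l$ proves item~\ref{upper bound for $A_i$}.

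For item~\ref{upper bound for sums of $B_i$} I would set $S_m=B_1+\cdots+B_m$ and $N=||(MV)^\uparrow-V^\uparrow||_1$ and induct on $m$. The first item of lemma~\ref{using the diagonal entries} gives $B_m=A_m+\big((MV)^\uparrow(m)-V^\uparrow(m)\big)$, and item~\ref{upper bound for $A_i$} together with $m\le d$ and $\max(\gamma^{-1},\rho)\ge 1$ gives $A_m\le(C-1)S_{m-1}$; hence $B_m\le(C-1)S_{m-1}+N$ and so $S_m\le C\,S_{m-1}+N$. The base case is $S_1=B_1=(MV)^\uparrow(1)-V^\uparrow(1)=\underline{MV}-\underline{V}$, which is $\ge 0$ by lemma~\ref{decreasing the length} and hence $\le N$; the recursion then delivers $S_m\le(1+C+\cdots+C^{m-1})N$.

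The main obstacle is item~\ref{upper bound for $A_i$}: one must find a decomposition of $A_i$ that isolates a single gap $V(l+1)-V(l)$, so that on each gap exactly one of the two hypotheses can be applied — the bounded diagonal entries handling the gaps that $\sigma$ crosses (through lemma~\ref{using the diagonal entries}) and the bounded ratios handling the gaps it does not. A smaller but necessary point for item~\ref{upper bound for sums of $B_i$} is that one should record the sharpened inequality $A_m\le(C-1)S_{m-1}$ rather than merely $A_m\le C\,S_{m-1}$, otherwise the geometric sum does not close with the constant $1+C+\cdots+C^{m-1}$.
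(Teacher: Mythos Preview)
Your proof is correct, and for item~\ref{upper bound for $A_i$} it takes a genuinely different route from the paper. The paper bounds each summand $A_{i,j}=M(\sigma(i),j)[V(i)-V(j)]$ separately, splitting according to whether some orbit $O(k)$ with $k\le j$ reaches $[\![i,d]\!]$; in both cases it appeals to the orbit inequality (item~\ref{orbit} of lemma~\ref{using the diagonal entries}), and in the second case it further splits $A_{i,j}$ into three pieces via the intermediate points $\sigma(i)$ and $\sigma^{-1}(j)$. Your telescoping rewrite $A_i=\sum_{l=1}^{i-1}\mu_l(V(l+1)-V(l))$ is cleaner: on each gap you only need a dichotomy on whether $\sigma$ stabilises $\{1,\ldots,l\}$, and you never invoke item~\ref{orbit} at all---item~\ref{diagonal entry} suffices in the first case, and a direct lower bound on $B_{\sigma^{-1}(j)}$ suffices in the second. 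The price you pay is that the case split depends on $l$ rather than on $j$, but this costs nothing since you sum $i-1$ terms either way.

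For item~\ref{upper bound for sums of $B_i$}, both proofs reach the same recursion $S_m\le CS_{m-1}+N$ and close by induction. The paper gets there by summing $B_i\le|(MV)^\uparrow(i)-V^\uparrow(i)|+A_i$ over all $i\le m$ and bounding $\sum_{i\le m}A_i\le m(d-1)\max(\gamma^{-1},\rho)S_{m-1}\le CS_{m-1}$ at once; this avoids the need for your sharpened bound $A_m\le(C-1)S_{m-1}$, which you correctly flag as necessary in your termwise approach (and which does hold since $(d-1)\max(\gamma^{-1},\rho)\ge 1$).
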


\begin{proof}
Given $i \in \odc 2,d \fdc$ and $j \in \odc 1,i-1 \fdc$,  
let us check that 
$$A_{i,j} := M(\sigma(i),j)~[V(i)-V(j)] 
\le \max(\gamma^{-1},\rho)(B_1+\ldots+B_{i-1}).$$
We distinguish two cases. 
\begin{itemize}
\item If the orbit of some $k \in \odc 1,j \fdc$ contains some 
integer at least equal to $i$, then 
inequality~\ref{orbit} of lemma~\ref{using the diagonal entries}
applied with $(a,b) = (k,i)$ yields
$$A_{i,j} \le V(i)-V(j) \le V(i)-V(k) \le 
\gamma^{-1} \sum_{z \in O(k) \cap \odc 1,i-1 \fdc} B_z.$$
\item Otherwise, the orbit of every element of $\odc 1,j \fdc$ 
is contained in $\odc 1,i-1 \fdc$, so the orbit of every element 
of $\odc i,d \fdc$ is contained in $\odc j+1,d \fdc$. 
In particular, the orbits $O(\sigma(i)) = O(i)$ and $O(j)$ are 
disjoint. Applying inequality~\ref{diagonal entry} and 
inequality~\ref{orbit} of lemma~\ref{using the diagonal entries}, 
once with $(a,b) = (\sigma(i),i)$, 
once with $(a,b) = (j,\sigma^{-1}(j))$ yields
\begin{eqnarray*}
A_{i,j} 
&=& M(\sigma(i),j)~[V(i)-V(\sigma(i))]  
+ M(\sigma(i),j)~[V(\sigma(i))-V(\sigma^{-1}(j))] \\
& & \quad + M(\sigma(i),j)~[V(\sigma^{-1}(j)-V(j)] \\
&\le& \one_{\sigma(i)<i}~[V(i)-V(\sigma(i))] 
+ \one_{\sigma^{-1}(j)<\sigma(i)}~\rho M(j,\sigma(i))
[V(\sigma(i))-V(\sigma^{-1}(j))] \\
& & \quad + \one_{j<\sigma^{-1}(j)}~[V(\sigma^{-1}(j))-V(j)] \\
&\le& \gamma^{-1} \sum_{z \in O(i) \cap \odc 1,i-1 \fdc} B_{z}~
+~\rho B_{\sigma^{-1}(j)}~
+~ \gamma^{-1} \sum_{z \in O(j) \cap \odc 1,\sigma^{-1}(j)-1 \fdc} B_{z} \\
&\le& \max(\gamma^{-1},\rho) \sum_{z \in \odc 1,i-1 \fdc} B_{z}.
\end{eqnarray*}
\end{itemize}
In both cases, we have got the inequality
$A_{i,j} \le \max(\gamma^{-1},\rho)(B_1+\ldots+B_{i-1})$. 
Summing over all $j \in \odc 1,i-1 \fdc$ yields 
item~\ref{upper bound for $A_i$}.

Let $m \in \odc 1,d \fdc$. 
Equality $(MV)^\uparrow(i)-V^\uparrow(i) = B_i-A_i$ and 
item~\ref{upper bound for $A_i$} yield
\begin{eqnarray*}
\sum_{i=1}^m B_i 
&\le& \sum_{i=1}^m |(MV)^\uparrow(i)-V^\uparrow(i)| + \sum_{i=1}^m A_i \\ 
&\le& ||(MV)^\uparrow-V^\uparrow||_1
+ \sum_{i=1}^m (i-1)\max(\gamma^{-1},\rho)(B_1+\ldots+B_{i-1}) \\ 
&\le& ||(MV)^\uparrow-V^\uparrow||_1   
+ \sum_{i=1}^m (d-1)\max(\gamma^{-1},\rho)(B_1+\ldots+B_{m-1}) \\ 
&\le& ||(MV)^\uparrow-V^\uparrow||_1 + C\sum_{i=1}^{m-1} B_i
\end{eqnarray*}
The particular case where $m=1$ yields $B_1 \le ||(MV)^\uparrow-V^\uparrow||_1$. 
Item~\ref{upper bound for sums of $B_i$} follows by 
induction.
\end{proof}

\begin{coro}\label{variations}
Let $M$ be some $d \times d$ stochastic matrix with diagonal entries
bounded below by some $\gamma>0$
Assume that there exists some constant $\rho \ge 1$ 
such that for every $n \ge 1$ and $i,j$ in $\odc 1,d \fdc$, 
$M(i,j) \le \rho M(j,i)$. Set $C = d(d-1)\max(\gamma^{-1},\rho)$. 
Then for any column $V \in \rrf^d$ the following statements hold.
\begin{enumerate}
\item\label{increasing linear combination} 
Fix $m \in \odc 1,d \fdc$ and $s \ge 1+(m-1)\max(\gamma^{-1},\rho)$.
$$\sum_{i=1}^m s^{-i}(MV)^\uparrow(i) \ge \sum_{i=1}^m s^{-i}V^\uparrow(i).$$
\item\label{upper bound for $||MV-V||_1$} 
$||MV-V||_1 \le (2+C+\cdots+C^{d-2})||(MV)^\uparrow-V^\uparrow||_1$.
\end{enumerate}
\end{coro}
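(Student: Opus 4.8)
The plan is to reduce both inequalities to the situation of lemmas~\ref{using the diagonal entries} and~\ref{upper bounds} --- where the components of $V$ are already sorted --- by a harmless permutation change of coordinates, and then to run two short computations fed by the estimates of those lemmas. Let $P$ be the permutation matrix with $PV=V^\uparrow$. Replacing $M$ by $PMP^{-1}$ and $V$ by $V^\uparrow$ changes neither $||MV-V||_1$ nor $(MV)^\uparrow$ nor $V^\uparrow$ (a permutation is an $\ell^1$-isometry and commutes with the reordering map $X\mapsto X^\uparrow$), and it preserves the hypotheses $M(i,i)\ge\gamma$ and $M(i,j)\le\rho M(j,i)$. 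So I may assume $V(1)\le\cdots\le V(d)$; then $V^\uparrow=V$, the permutation $\sigma$ and the numbers $A_i,B_i$ of lemma~\ref{using the diagonal entries} are available, all $B_i$ are nonnegative, and $(MV)^\uparrow(i)-V^\uparrow(i)=B_i-A_i$ for every $i$.

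For item~\ref{increasing linear combination}, set $\mu=\max(\gamma^{-1},\rho)$ (so $C=d(d-1)\mu$). The case $m=1$ is immediate since $(MV)^\uparrow(1)-V^\uparrow(1)=B_1\ge0$; assume $m\ge2$, so that $s\ge1+(m-1)\mu>1$. From $(MV)^\uparrow(i)-V^\uparrow(i)=B_i-A_i$ and the bound $A_i\le(i-1)\mu\,(B_1+\cdots+B_{i-1})\le(m-1)\mu\,(B_1+\cdots+B_{i-1})$ for $i\le m$ --- item~\ref{upper bound for $A_i$} of lemma~\ref{upper bounds} --- I would write
\[
\sum_{i=1}^m s^{-i}\big((MV)^\uparrow(i)-V^\uparrow(i)\big)\ \ge\ \sum_{i=1}^m s^{-i}B_i\ -\ (m-1)\mu\sum_{j=1}^{m-1}B_j\sum_{i=j+1}^m s^{-i},
\]
then bound the inner sum by $\sum_{i=j+1}^m s^{-i}\le s^{-j}/(s-1)$ and invoke the hypothesis $s\ge1+(m-1)\mu$, which says exactly that $(m-1)\mu/(s-1)\le1$, to conclude that the right-hand side is at least $\sum_{i=1}^m s^{-i}B_i-\sum_{j=1}^{m-1}s^{-j}B_j=s^{-m}B_m\ge0$. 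This is the desired inequality, and the nonnegativity of the $B_i$ is what licenses both the step from the double sum and the discarding of $s^{-m}B_m$.

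For item~\ref{upper bound for $||MV-V||_1$}, I would reindex the sum defining $||MV-V||_1$ by the bijection $\sigma$ and use $(MV)(\sigma(i))=(MV)^\uparrow(i)$ and $V(\sigma(i))=V^\uparrow(\sigma(i))$; the triangle inequality through $V^\uparrow(i)$ then gives
\[
||MV-V||_1=\sum_{i=1}^d\big|(MV)^\uparrow(i)-V^\uparrow(\sigma(i))\big|\ \le\ ||(MV)^\uparrow-V^\uparrow||_1+\sum_{i=1}^d\big|V(\sigma(i))-V(i)\big|.
\]
The last item of lemma~\ref{using the diagonal entries} bounds the reshuffling term $\sum_i|V(\sigma(i))-V(i)|$ by $2\gamma^{-1}\sum_{i=1}^{d-1}B_i$, and item~\ref{upper bound for sums of $B_i$} of lemma~\ref{upper bounds}, applied with $m=d-1$, bounds $\sum_{i=1}^{d-1}B_i$ by $(1+C+\cdots+C^{d-2})\,||(MV)^\uparrow-V^\uparrow||_1$. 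Substituting and collecting the constants --- noting that, since $d\ge2$, one has $2\gamma^{-1}\le d(d-1)\max(\gamma^{-1},\rho)=C$ --- yields the announced bound.

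The first inequality is essentially bookkeeping once one observes that the threshold $s\ge1+(m-1)\mu$ is tuned precisely to close the double-sum estimate. The main obstacle is the second inequality: going from the $\ell^1$-distance between the two \emph{unsorted} vectors $MV$ and $V$ to a multiple of the $\ell^1$-distance between their \emph{sorted} versions forces one to control the reshuffling term $\sum_i|V(\sigma(i))-V(i)|$, which is exactly where the full force of lemma~\ref{using the diagonal entries} (its last item, and through it items~\ref{diagonal entry} and~\ref{orbit}) and of lemma~\ref{upper bounds} comes in; keeping track of the exact power of $C$ in the final constant is the delicate part.
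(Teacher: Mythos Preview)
Your argument follows the paper's proof essentially verbatim: the same permutation reduction, the same use of $(MV)^\uparrow(i)-V^\uparrow(i)=B_i-A_i$ together with item~\ref{upper bound for $A_i$} of lemma~\ref{upper bounds} for the first inequality, and the same triangle-inequality splitting through $V^\uparrow(i)$ combined with the last item of lemma~\ref{using the diagonal entries} and item~\ref{upper bound for sums of $B_i$} of lemma~\ref{upper bounds} for the second.

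One point to flag in your write-up of item~\ref{upper bound for $||MV-V||_1$}: your chain gives
\[
||MV-V||_1 \le ||(MV)^\uparrow-V^\uparrow||_1 + 2\gamma^{-1}(1+C+\cdots+C^{d-2})\,||(MV)^\uparrow-V^\uparrow||_1,
\]
and using $2\gamma^{-1}\le C$ this yields the constant $1+C+C^2+\cdots+C^{d-1}$, \emph{not} the announced $2+C+\cdots+C^{d-2}$; the two differ (the former is larger once $C>1$). The paper's own proof has the same slip --- it passes from $2\gamma^{-1}\sum_{i\le d-1}B_i$ directly to $(1+C+\cdots+C^{d-2})||(MV)^\uparrow-V^\uparrow||_1$, silently dropping the factor $2\gamma^{-1}$ --- so the stated constant appears to be a typo in the corollary. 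This is purely cosmetic: any fixed constant depending only on $d$, $\gamma$, $\rho$ suffices for corollary~\ref{finite variation}, and your derivation correctly produces such a constant.
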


\begin{proof}
By applying a same permutation to the components of $V$, to the rows 
and to the columns of $M$, one may assume that $V(1) \le \ldots \le V(d)$. 
Let $\sigma$ be a permutation of $\odc 1,d \fdc$ such that 
$(MV)(\sigma(1)) \le \ldots \le (MV)(\sigma(d))$.
Then lemmas~\ref{using the diagonal entries} and~\ref{upper bounds} apply. 

For every $i \in \odc 1,m \fdc$, 
$$(MV)^\uparrow(i) - V^\uparrow(i) = B_i-A_i 
\ge B_i - (m-1)\max(\gamma^{-1},\rho)(B_1+\ldots+B_{i-1}).$$ 
Summing over $i$ yields 
\begin{eqnarray*}
\sum_{i=1}^m s^{-i} ((MV)^\uparrow(i) - V^\uparrow(i))
&\ge& 
\sum_{j=1}^m s^{-j}B_j - 
\sum_{i=2}^m \sum_{j=1}^{i-1} s^{-i}(m-1)\max(\gamma^{-1},\rho) B_j \\
&=& 
\sum_{j=1}^m \Big(s^{-j} - (m-1)\max(\gamma^{-1},\rho) \sum_{i=j+1}^{m} 
s^{-j} \Big) B_j \\
&\ge&
\sum_{j=1}^m \Big(s^{-j} - (m-1)\max(\gamma^{-1},\rho) \frac{s^{-(j+1)}}{1-s^{-1}} \Big) B_j \\
&=& 
\sum_{j=1}^m \frac{s^{-j}}{s-1}~[s-1-(m-1)\max(\gamma^{-1},\rho)]~B_j\\
&\ge& 0.
\end{eqnarray*}
Furthermore, for every $i \in \odc 1,d \fdc$, 
$$\big|(MV)(\sigma(i))-V(\sigma(i))\big| - \big|(MV)(\sigma(i))-V(i)\big| 
\le \big|V(\sigma(i))-V(i)\big|.$$
Summing over $i$ and using the last statements of 
lemma~\ref{using the diagonal entries} and
corollary~\ref{upper bounds} yield
\begin{eqnarray*}
||MV-V||_1 - ||(MV)^\uparrow-V^\uparrow||_1
&\le& \sum_{i=1}^d \big|V(\sigma(i))-V(i)\big| \\
&\le& 2\gamma^{-1} \sum_{i=1}^{d-1} B_i \\
&\le& (1+C+\cdots+C^{d-2})||(MV)^\uparrow-V^\uparrow||_1
\end{eqnarray*}
The proof is complete.
\end{proof}

We now derive the last step of the proof of 
theorem~\ref{infinite product of stochastic matrices}. 
Indeed, applying the next corollary to each vector of the 
canonical basis on $\rrf^d$ yields 
theorem~\ref{infinite product of stochastic matrices}. 

\begin{coro}~\label{finite variation}
Let $(M_n)_{n \ge 1}$ be some sequence of $d \times d$ stochastic matrices. 
Assume that there exists some constants $\gamma>0$, and $\rho \ge 1$ 
such that for every $n \ge 1$ and $i,j$ in $\odc 1,d \fdc$, 
$M_n(i,i) \ge \gamma$ and $M_n(i,j) \le \rho M_n(j,i)$. 
For every column vector $V \in \rrf^d$, 
the sequence of vectors $(V_n)_{n \ge 0} := (M_n \ldots M_1V)_{n \ge 0}$ 
has a finite variation, so it converges. Moreover, the series 
$\sum_n M_n(i,j)$ and $\sum_n M_n(j,i)$ converge whenever the two sequences 
$(V_n(i))_{n \ge 0}$ and $(V_n(j))_{n \ge 0}$ have a different limit.
\end{coro}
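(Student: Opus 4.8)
The plan is to feed the recursion $V_{n+1}=M_{n+1}V_n$ into Corollary~\ref{variations}, applied at each step to $M_{n+1}$ and $V_n$, and to exploit the monotonicity of Lemma~\ref{decreasing the length}. Fix $s:=1+(d-1)\max(\gamma^{-1},\rho)$ and, for $m\in\odc 1,d\fdc$, set $\phi^{(n)}_m:=\sum_{i=1}^m s^{-i}V_n^\uparrow(i)$ (with $\phi^{(n)}_0:=0$). Since $s\ge 1+(m-1)\max(\gamma^{-1},\rho)$ and the conclusion of item~\ref{increasing linear combination} of Corollary~\ref{variations} only involves the sorted vectors $(M_{n+1}V_n)^\uparrow=V_{n+1}^\uparrow$ and $V_n^\uparrow$, each sequence $(\phi^{(n)}_m)_{n\ge 0}$ is non-decreasing; by Lemma~\ref{decreasing the length} every entry of every $V_n$ lies in $[\underline{V_0},\overline{V_0}]$, so $(\phi^{(n)}_m)_{n\ge 0}$ is bounded, hence converges, and $\sum_n|\phi^{(n+1)}_m-\phi^{(n)}_m|=\sum_n(\phi^{(n+1)}_m-\phi^{(n)}_m)<+\infty$. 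The identity $s^{-m}(V_{n+1}^\uparrow(m)-V_n^\uparrow(m))=(\phi^{(n+1)}_m-\phi^{(n)}_m)-(\phi^{(n+1)}_{m-1}-\phi^{(n)}_{m-1})$ then yields $\sum_n|V_{n+1}^\uparrow(m)-V_n^\uparrow(m)|<+\infty$ for each $m$, so each $(V_n^\uparrow(m))_{n\ge 0}$ converges, say to $\ell_m$, and $\sum_n||V_{n+1}^\uparrow-V_n^\uparrow||_1<+\infty$. To pass to the unsorted vectors I apply item~\ref{upper bound for $||MV-V||_1$} of Corollary~\ref{variations} with $M=M_{n+1}$, $V=V_n$: with $C=d(d-1)\max(\gamma^{-1},\rho)$,
$$||V_{n+1}-V_n||_1=||M_{n+1}V_n-V_n||_1\le (2+C+\cdots+C^{d-2})\,||V_{n+1}^\uparrow-V_n^\uparrow||_1,$$
and summing over $n$ shows that $(V_n)_{n\ge 0}$ has a finite variation, hence converges, say to $V_\infty$.

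For the statement about $\sum_n M_n(i,j)$ and $\sum_n M_n(j,i)$, I carry out at each step the constructions of Lemmas~\ref{using the diagonal entries} and~\ref{upper bounds}: choose a permutation $\pi_n$ with $\tilde V_n(a):=V_n(\pi_n(a))$ non-decreasing in $a$, set $\tilde M_{n+1}(a,b):=M_{n+1}(\pi_n(a),\pi_n(b))$, let $\sigma_n$ be a permutation sorting $\tilde M_{n+1}\tilde V_n$, and let $B^{(n)}_i$ be the associated quantities (so $\tilde V_n(m)=V_n^\uparrow(m)$ and $(\tilde M_{n+1}\tilde V_n)^\uparrow=V_{n+1}^\uparrow$). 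Item~\ref{upper bound for sums of $B_i$} of Lemma~\ref{upper bounds} gives $\sum_{i=1}^{d-1}B^{(n)}_i\le(1+C+\cdots+C^{d-2})\,||V_{n+1}^\uparrow-V_n^\uparrow||_1$, hence $\sum_n\sum_{i=1}^{d-1}B^{(n)}_i<+\infty$; the last inequality of Lemma~\ref{using the diagonal entries} then gives $\sum_{i=1}^d|\tilde V_n(\sigma_n(i))-\tilde V_n(i)|\le 2\gamma^{-1}\sum_{i=1}^{d-1}B^{(n)}_i$, which is therefore summable, so $|\tilde V_n(\sigma_n(i))-\tilde V_n(i)|\to 0$ for every $i$.

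Let $w_1<\cdots<w_t$ be the distinct values among $\ell_1\le\cdots\le\ell_d$, put $\delta:=\tfrac12\min_s(w_{s+1}-w_s)>0$, and let $I_s$ be the (fixed, consecutive) block of ranks $m$ with $\ell_m=w_s$. For $n$ large, $\sigma_n$ must preserve each $I_s$: if for infinitely many $n$ the index $\sigma_n(i)$ lay outside the block of $i$, then (as there are finitely many blocks) $|\tilde V_n(\sigma_n(i))-\tilde V_n(i)|$ would be $\ge\delta$ for infinitely many $n$, contradicting its convergence to $0$. Similarly, for $n$ large, $\tilde V_n(\beta)-\tilde V_n(\alpha)\ge\delta$ whenever $\alpha$ and $\beta$ lie in blocks $I_s$ and $I_{s'}$ with $s<s'$.

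Finally, fix $i_0,j_0$ with $V_\infty(i_0)\neq V_\infty(j_0)$, say $V_\infty(i_0)<V_\infty(j_0)$. For $n$ large we have $i_0=\pi_n(a_n)$, $j_0=\pi_n(b_n)$ with $a_n\in I_s$, $b_n\in I_{s'}$ and $s<s'$ (in particular $s\le t-1$); block-preservation gives $\sigma_n^{-1}(a_n)\in I_s$, so $\sigma_n^{-1}(a_n)<b_n$, $\sigma_n^{-1}(a_n)\in\odc 1,d-1\fdc$, and $\tilde V_n(b_n)-\tilde V_n(\sigma_n^{-1}(a_n))\ge\delta$. Since the $B^{(n)}_i$ are non-negative,
$$\sum_{i=1}^{d-1}B^{(n)}_i\ \ge\ B^{(n)}_{\sigma_n^{-1}(a_n)}\ \ge\ \tilde M_{n+1}(a_n,b_n)\,\big[\tilde V_n(b_n)-\tilde V_n(\sigma_n^{-1}(a_n))\big]\ \ge\ \delta\,M_{n+1}(i_0,j_0).$$
Summing over $n$ yields $\sum_n M_n(i_0,j_0)<+\infty$, and then $M_n(j_0,i_0)\le\rho M_n(i_0,j_0)$ gives $\sum_n M_n(j_0,i_0)<+\infty$. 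Applying the corollary to the canonical basis vectors then recovers the last assertion of Theorem~\ref{infinite product of stochastic matrices}. The delicate point is the third paragraph: one must control the sorting permutation $\sigma_n$, and the right fact is that it eventually stabilizes the block partition determined by $V_\infty$ — that is precisely what lets a single term $B^{(n)}_{\sigma_n^{-1}(a_n)}$ dominate the prescribed off-diagonal entry $M_{n+1}(i_0,j_0)$, up to the fixed factor $\delta$.
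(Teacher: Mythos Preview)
Your argument for finite variation is essentially the paper's: the same monotone partial sums $\phi_m^{(n)}$, the same differencing to get finite variation of $V_n^\uparrow$, and the same appeal to item~\ref{upper bound for $||MV-V||_1$} of Corollary~\ref{variations} to pass to the unsorted vectors.

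For the summability of $\sum_n M_n(i,j)$ you take a genuinely different route. The paper fixes a level $k$, introduces the truncated vector $U_n(j)=\min(V_n(j),\lambda_k+\ep)$, and combines the rearrangement inequality with item~\ref{increasing linear combination} of Corollary~\ref{variations} to show
\[
\sum_{i=1}^{m_k}s^{-i}\big[V_{n+1}^\uparrow(i)-V_n^\uparrow(i)\big]\ \ge\ (\lambda_{k+1}-\lambda_k-2\ep)\,s^{-m_k}\!\!\sum_{i\in J_k}\sum_{j\in J_k^c}M_{n+1}(i,j),
\]
so the whole block of entries is controlled directly by the increment of a single monotone scalar. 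Your approach instead exploits the summability of $\sum_n\sum_i B_i^{(n)}$ (from Lemma~\ref{upper bounds}) together with the last inequality of Lemma~\ref{using the diagonal entries} to force the sorting permutation $\sigma_n$ to eventually respect the limit blocks, after which a single term $B^{(n)}_{\sigma_n^{-1}(a_n)}$ dominates the prescribed entry $M_{n+1}(i_0,j_0)$. Your way is more hands-on with the permutations and avoids the truncation/rearrangement trick; the paper's way is slicker in that it bounds all cross-block entries at once and never needs to argue that $\sigma_n$ stabilizes. Both are correct; one point worth making explicit in your write-up is why $a_n$ eventually lies in the fixed block $I_s$ (you use it but do not justify it): this follows since $V_n^\uparrow(a_n)=V_n(i_0)\to V_\infty(i_0)=w_s$ while $V_n^\uparrow(m)\to\ell_m$ uniformly in $m$, forcing $\ell_{a_n}=w_s$ for $n$ large.
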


\begin{proof}
Fix $s \ge 1+(d-1)\max(\gamma^{-1},\rho)$. For each $n$, 
one can apply corollary~\ref{variations} to the matrix $M_{n+1}$ 
and to the vector $V_n$. 

For every $m \in \odc 1,d \fdc$, the sequence 
$(s^{-1} V_n^\uparrow(1) + \cdots + s^{-m} V_n^\uparrow(m))_{n \ge 0}$
is non-decreasing by corollary~\ref{variations} (first part) and 
bounded above by $s^{-1} V^\uparrow(d) + \cdots + s^{-m} V^\uparrow(d)$, 
thanks to lemma~\ref{decreasing the length}, so it has a finite 
variation and converges.
By difference, each sequence $(V_n^\uparrow(i))_{n \ge 0}$ has 
a finite variation. The convergence of the series 
$\sum_n ||V_{n+1}^\uparrow-V_n^\uparrow||_1$ follows, and 
also $\sum_n ||V_{n+1}-V_n||_1$ by 
corollary~\ref{variations} (second part).

Call $\lambda_1<\ldots<\lambda_r$ the distinct values of  
$\lim_{n \to \infty}V_n(i)$ for $i \in \odc 1,d \fdc$. 
For every $k \in \odc 1,r \fdc$, set 
$I_k = \{i \in \odc 1,d \fdc : \lim_{n \to \infty}V_n(i) = \lambda_k\}$, 
$J_k = I_1 \cup \ldots \cup I_k$ and call $m_k$ the size of $J_k$. 
Fix $\ep>0$ such that 
$2\ep<\gamma\min(\lambda_2-\lambda_1,\ldots,\lambda_r-\lambda_{r-1})$, 
so that the intervals $[\lambda_k-\ep,\lambda_k+\ep]$ are pairwise disjoint. 
Then one can find some non-negative integer $N$, such that 
$V_n(i) \in [\lambda_k-\ep,\lambda_k+\ep]$ for every 
$n \ge N$, $k \in \odc 1,r \fdc$, and $i \in I_k$.  

Given $k \in \odc 1,r-1 \fdc$ and $n \ge N$, we show below that  
$$\sum_{i=1}^{m_k}s^{-i}[V_{n+1}^\uparrow(i)-V_n^\uparrow(i)]
\ge (\lambda_{k+1}-\lambda_k-2\ep)s^{-m_k}
\sum_{i \in J_k} \sum_{j \in J_k^c} M_{n+1}(i,j).$$
This inequality together with the convergence of the sequence 
$(V_n^\uparrow)_{n \ge 0}$ and the inequalities $M_n(j,i) \le \rho M_n(i,j)$ 
will yield the convergence of the series $\sum_n M_n(i,j)$ and 
$\sum_n M_n(j,i)$ for every $(i,j) \in J_k \times J_k^c$. 

Fix $n \ge N$ and a permutation $\sigma$ of $\odc 1,d \fdc$ 
such that $V_{n+1}(\sigma(1)) \le \ldots \le V_{n+1}(\sigma(d))$. 
Note that $\sigma(\odc 1,m_k \fdc) = J_k$. 

The column vector $U_n$ defined by $U_n(j) = \min(V_n(j),\lambda_k+\ep)$ 
has the same $m_k$ least components as $V_n$ 
(corresponding to the indexes $j \in J_k$), 
so $U_n^\uparrow$ have the same $m_k$ first components as $V_n^\uparrow$. 
Furthermore, $V_n(j) - U_n(j) \ge \lambda_{k+1}-\lambda_k-2\ep$ 
for every $j \in J_k^c$. 
Hence, for every $i \in \odc 1,d \fdc$,
\begin{eqnarray*}
V_{n+1}^\uparrow(i)-(M_{n+1}U_n)(\sigma(i)) 
&=& V_{n+1}(\sigma(i))-(M_{n+1}U_n)(\sigma(i)) \\
&=& (M_{n+1}V_n-M_{n+1}U_n))(\sigma(i)) \\
&=& \sum_{j=1}^d M_{n+1}(\sigma(i),j)~(V_n(j) - U_n(j)) \\ 
&\ge& (\lambda_{k+1}-\lambda_k-2\ep) \sum_{j \in J_k^c} M_{n+1}(\sigma(i),j).
\end{eqnarray*}
Fix $s \ge 1+(m_k-1)\max(\gamma^{-1},\rho)$. Then
\begin{eqnarray*}
\sum_{i=1}^{m_k}s^{-i} \Big( V_{n+1}^\uparrow(i)
- (M_{n+1}U_n)(\sigma(i)) \Big)
&\ge& (\lambda_{k+1}-\lambda_k-2\ep)~
\sum_{i=1}^{m_k} s^{-i} \sum_{j \in J_k^c} M_{n+1}(\sigma(i),j) \\ 
&\ge& (\lambda_{k+1}-\lambda_k-2\ep)~
s^{-m_k} \sum_{i=1}^{m_k} \sum_{j \in J_k^c} M_{n+1}(\sigma(i),j) \\. 
&=& (\lambda_{k+1}-\lambda_k-2\ep)~
s^{-m_k} \sum_{i \in J_k} \sum_{j \in J_k^c} M_{n+1}(i,j). 
\end{eqnarray*}
But the rearrangement inequality~\footnote{namely 
$\displaystyle{\sum_{i=1}^d U^\uparrow(d+1-i)V^\uparrow(i) \le 
\sum_{i=1}^d U(i)V(i) \le \sum_{i=1}^d U^\uparrow(i)V^\uparrow(i)}$ 
for every $U$ and $V$ in $\rrf^d$.} and the first part 
of corollary~\ref{variations} yield
\begin{eqnarray*}
\sum_{i=1}^{m_k}s^{-i}(M_{n+1}U_n)(\sigma(i))
&\ge&\sum_{i=1}^{m_k}s^{-i}(M_{n+1}U_n)^\uparrow(i) \\
&\ge& \sum_{i=1}^{m_k}s^{-i}U_n^\uparrow(i) \\
&=& \sum_{i=1}^{m_k}s^{-i}V_n^\uparrow(i).
\end{eqnarray*}
We get the desired inequality by additioning the last two inequalities. 

The proof is complete.
\end{proof}

\subsection{Proof of theorem~\ref{infinite product of doubly-stochastic matrices}}

The proof we give is simpler than the proof of 
theorem~\ref{infinite product of stochastic matrices}, 
although some arguments are very similar. 
We begin with the key lemma. 

\begin{lemm}\label{dispersion}
Let $M$ be some $d \times d$ doubly-stochastic matrix with diagonal 
entries bounded below by some constant $\gamma>0$, and $V \in \rrf^d$ 
be any column vector. Call {\rm dispersion of} $V$ the quantity
$$D(V) = \sum_{1 \le i,j \le d}|V(i)-V(j)|.$$
Then $D(V)-D(MV) \ge \gamma ||MV-V||_1$.  
\end{lemm}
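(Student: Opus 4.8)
The plan is to exploit the doubly-stochastic structure to write both $D(V)-D(MV)$ and $\|MV-V\|_1$ in a comparable form. First I would reduce to the case where the components of $V$ are sorted: by applying the same permutation to the entries of $V$ and to the rows and columns of $M$, one may assume $V(1)\le\cdots\le V(d)$; this changes neither $D(\cdot)$ nor $\|\cdot\|_1$ nor the doubly-stochastic property nor the lower bound on the diagonal. Then I would pick a permutation $\sigma$ of $\odc 1,d\fdc$ with $(MV)(\sigma(1))\le\cdots\le(MV)(\sigma(d))$ and introduce, exactly as in Lemma~\ref{using the diagonal entries}, the quantities
$$A_i=\sum_{j<i}M(\sigma(i),j)\,[V(i)-V(j)],\qquad B_i=\sum_{j>i}M(\sigma(i),j)\,[V(j)-V(i)],$$
so that $(MV)^\uparrow(i)-V^\uparrow(i)=B_i-A_i$, with all summands non-negative.

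Next I would compute $D(V)$ for a sorted vector: $D(V)=\sum_{i,j}|V(i)-V(j)|=2\sum_{i<j}(V(j)-V(i))=2\sum_{k=1}^{d-1}k(d-k)(V(k+1)-V(k))$, and more to the point $D(V)=\sum_{i=1}^d\sum_{j=1}^d|V(i)-V(j)|$ can be rewritten, using a ``potential'' style identity, as $D(V)=2\sum_i\big((d-i)V^\uparrow(i)-(i-1)V^\uparrow(i)\big)\cdot(\text{something})$; cleaner is to use $D(V)=2\sum_{i=1}^d(2i-1-d)V^\uparrow(i)$, so that $D(V)$ is a fixed linear functional of the sorted vector. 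Hence $D(V)-D(MV)=2\sum_{i=1}^d(2i-1-d)\big(V^\uparrow(i)-(MV)^\uparrow(i)\big)=2\sum_{i=1}^d(d+1-2i)(B_i-A_i)$. I would then want to bound this below by $\gamma\|MV-V\|_1$. The key leverage is the diagonal term: for each $i$ with $\sigma(i)>i$ one has $B_i\ge M(\sigma(i),\sigma(i))[V(\sigma(i))-V(i)]\ge\gamma[V(\sigma(i))-V(i)]$, exactly item~\ref{diagonal entry} of Lemma~\ref{using the diagonal entries}, and symmetrically $A_i\ge\gamma[V(i)-V(\sigma(i))]$ when $\sigma(i)<i$. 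Summing, $\sum_i(B_i+A_i)\ge\gamma\sum_i|V(\sigma(i))-V(i)|$, and since the last part of Lemma~\ref{using the diagonal entries} gives $\|MV-V\|_1\le\|(MV)^\uparrow-V^\uparrow\|_1+\sum_i|V(\sigma(i))-V(i)|$, this already reduces matters to comparing $D(V)-D(MV)$ with $\gamma\|(MV)^\uparrow-V^\uparrow\|_1$ plus a multiple of $\sum_i(A_i+B_i)$.

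The main obstacle I anticipate is controlling the weights $(d+1-2i)$ appearing in $D(V)-D(MV)=2\sum_i(d+1-2i)(B_i-A_i)$: these coefficients change sign, so one cannot simply discard the $A_i$. The doubly-stochastic hypothesis is precisely what saves this — it forces $\sum_i A_i=\sum_i B_i$ (the total upward mass equals the total downward mass, equivalently $\sum_i (MV)(\sigma(i))=\sum_i V(i)$ uses only stochasticity of rows, but the finer balance needed here, that the flow across each ``threshold'' between $\{1,\dots,m\}$ and $\{m+1,\dots,d\}$ is balanced, is where double-stochasticity enters). Concretely I would use that $\sum_{i\le m}\big((MV)(\sigma(i))-V(i)\big)$ equals the net flow of mass across the cut, which for a doubly-stochastic $M$ with sorted $V$ can be shown to be $\ge 0$, hence by Abel summation $\sum_i(d+1-2i)\big((MV)^\uparrow(i)-V^\uparrow(i)\big)\le 0$, i.e. $D(MV)\le D(V)$, and quantitatively $D(V)-D(MV)\ge 2\sum_{i}\big(\text{cut flow at }i\big)\ge 2\cdot(\text{diagonal contributions})\ge\gamma\cdot 2\sum_i|V(\sigma(i))-V(i)|\ge\gamma\|MV-V\|_1$, after absorbing the $\|(MV)^\uparrow-V^\uparrow\|_1$ term into the cut-flow lower bound. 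I would organize the final inequality as: $D(V)-D(MV)=2\sum_{m=1}^{d-1}\Phi_m$ where $\Phi_m:=\sum_{i\le m}\sum_{j>m}\big(M(\sigma(i),j)(V(j)-V(i))+M(\sigma(j)\dots)\big)$ is a manifestly non-negative cross-cut sum, extract from each $\Phi_m$ the diagonal-driven lower bound, and match it against the corresponding piece of $\gamma\|MV-V\|_1$. The bookkeeping of which cut each term $M(\sigma(i),j)[V(j)-V(i)]$ contributes to — it contributes to every $\Phi_m$ with $i\le m<j$, i.e. with multiplicity $j-i$, which is exactly $\frac12\sum_i(d+1-2i)(B_i-A_i)$ rewritten — is the routine but delicate part, and I would present it via the identity $\sum_{i,j}|V(i)-V(j)|=2\sum_{m=1}^{d-1}\big(\text{mass above }m\big)$-type telescoping rather than by manipulating signed coefficients directly.
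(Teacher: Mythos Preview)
Your approach --- sorting $V$, importing the $A_i,B_i$ machinery of Lemma~\ref{using the diagonal entries}, and then attempting a cut-flow / Abel-summation argument --- is a genuine detour compared to the paper's proof, and it leaves the decisive step unfinished. You do correctly arrive at $D(V)-D(MV)=4\sum_{m=1}^{d-1}C_m$ with $C_m=\sum_{i\le m}[(MV)^\uparrow(i)-V^\uparrow(i)]\ge 0$ (this is essentially Lemma~\ref{increasing sums}), which already gives $D(MV)\le D(V)$. But the passage from $4\sum_m C_m$ to $\gamma\|MV-V\|_1$ is only gestured at: your definition of $\Phi_m$ trails off into ``$M(\sigma(j)\dots)$'', the phrase ``routine but delicate'' is doing all the work, and it is not clear how the diagonal lower bound on a single $B_i$ --- which controls $|V(\sigma(i))-V(i)|$ --- is to be aggregated across the cuts to dominate $\|MV-V\|_1$ rather than merely $\|(MV)^\uparrow-V^\uparrow\|_1$ or $\sum_i|V(\sigma(i))-V(i)|$. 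The constant $\gamma$ has to enter somewhere concrete, and in your sketch it never quite does.

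The paper's proof bypasses all of this: no sorting, no $\sigma$, no cuts. The key identity is that double stochasticity lets you rewrite $D(V)$ itself over four indices,
\[
D(V)=\sum_{k,l}|V(k)-V(l)|=\sum_{i,j}\sum_{k,l}M(i,k)M(j,l)\,|V(k)-V(l)|,
\]
since $\sum_i M(i,k)=\sum_j M(j,l)=1$. Meanwhile $(MV)(i)-(MV)(j)=\sum_{k,l}M(i,k)M(j,l)(V(k)-V(l))$, so $D(V)-D(MV)=\sum_{i,j}\Delta(i,j)$ with each $\Delta(i,j)\ge 0$ by the triangle inequality. Discarding all off-diagonal $\Delta(i,j)$ and, within $\Delta(i,i)=\sum_{k,l}M(i,k)M(i,l)|V(k)-V(l)|$, keeping only the terms with $l=i$ gives $\Delta(i,i)\ge M(i,i)\sum_k M(i,k)|V(k)-V(i)|\ge\gamma\,|(MV)(i)-V(i)|$; summing over $i$ finishes. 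This is a four-line argument, and the paper explicitly notes that the doubly-stochastic case is \emph{simpler} than Theorem~\ref{infinite product of stochastic matrices} --- the heavy machinery you reached for is precisely what the extra hypothesis is meant to avoid.
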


\begin{proof}
On the one hand, for every $i$ and $j$ in $\odc 1,d \fdc$, 
\begin{eqnarray*}
(MV)(i)-(MV)(j) 
&=& \sum_{1 \le k \le d} M(i,k)V(k) - \sum_{1 \le l \le d} M(j,l)V(l) \\
&=& \sum_{1 \le k,l \le d} M(i,k)M(j,l)(V(k)-V(l)), 
\end{eqnarray*}
so 
$$D(MV) = \sum_{1 \le i,j \le d} 
\Big|\sum_{1 \le k,l \le d} M(i,k)M(j,l)(V(k)-V(l))\Big|.$$
On the other hand
\begin{eqnarray*}
D(V) &=& \sum_{1 \le k,l \le d}|V(k)-V(l)| \\
&=& \sum_{1 \le i,j \le d} \sum_{1 \le k,l \le d} M(i,k)M(j,l)|V(k)-V(l)|.
\end{eqnarray*}
By difference, $D(V)-D(MV)$ is the sum over all $i$ and $j$ in $\odc 1,d \fdc$ 
of the non negative quantities
$$\Delta(i,j) = 
\sum_{1 \le k,l \le d} M(i,k)M(j,l)|V(k)-V(l)|
- \Big|\sum_{1 \le k,l \le d} M(i,k)M(j,l)(V(k)-V(l))\Big|.$$
Thus
$$D(V)-D(MV) \ge \sum_{1 \le i \le d} \Delta(i,i).$$
But for every $i \in \odc 1,d \fdc$, 
\begin{eqnarray*}
\Delta(i,i) 
&=& \sum_{1 \le k,l \le d} M(i,k)M(i,l)|V(k)-V(l)| - 0 \\
&\ge& \sum_{1 \le k \le d} M(i,k)M(i,i)|V(k)-V(i)| \\
&\ge& \gamma \sum_{1 \le k \le d} M(i,k)|V(k)-V(i)| \\
&\ge& \gamma~\Big|\sum_{1 \le k \le d} M(i,k)(V(k)-V(i))\Big| \\
&=& \gamma~|(MV)(i)-V(i)|.
\end{eqnarray*}
The result follows.
\end{proof}

We now derive the last step of the proof of 
theorem~\ref{infinite product of doubly-stochastic matrices}. 
Indeed, applying the next corollary to each vector of the 
canonical basis on $\rrf^d$ yields 
theorem~\ref{infinite product of doubly-stochastic matrices}. 

\begin{coro}
Let $(M_n)_{n \ge 1}$ be any sequence of $d \times d$ bistochastic 
matrices with diagonal entries bounded below by some $\gamma>0$.  
For every column vector $V \in \rrf^d$, the sequence 
$(V_n)_{n \ge 0}:=(M_n \ldots M_1V)_{n \ge 0}$ has a finite variation, 
so it converges. Moreover, the series $\sum_n M_n(i,j)$ and 
$\sum_n M_n(j,i)$ converge whenever the two sequences 
$(V_n(i))_{n \ge 0}$ and $(V_n(j))_{n \ge 0}$ have a different limit.
\end{coro}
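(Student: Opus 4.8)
The plan is to read off the finite variation of $(V_n)_{n\ge 0}$ directly from Lemma~\ref{dispersion}, and then to obtain the convergence of the two series by monitoring, for a well-chosen threshold $c$, the non-increasing functional $f_n = \sum_{i=1}^d (V_n(i)-c)^+$. For the first part I would apply Lemma~\ref{dispersion} with $M = M_{n+1}$ and $V = V_n$, which gives $\gamma\,||V_{n+1}-V_n||_1 \le D(V_n) - D(V_{n+1})$ for every $n\ge 0$. Since $D\ge 0$, the sequence $(D(V_n))_{n\ge 0}$ is non-increasing, hence convergent, and summing the inequalities yields $\sum_{n\ge 0}||V_{n+1}-V_n||_1 \le \gamma^{-1}D(V_0) < \infty$; thus $(V_n)_{n\ge 0}$ has finite variation and converges in $\rrf^d$. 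Write $V_\infty$ for its limit.

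For the second part, let $\lambda_1 < \cdots < \lambda_r$ be the distinct values taken by $V_\infty$, set $I_k = \{i : V_\infty(i)=\lambda_k\}$ and $J_k = I_1\cup\cdots\cup I_k$. I would reduce the claim to showing that for each $k\in\{1,\ldots,r-1\}$ the double series $\sum_n \sum_{i\in J_k}\sum_{j\in J_k^c} M_n(i,j)$ converges. Indeed, since $M_n$ is doubly-stochastic, equating the total mass of the rows indexed by $J_k$ with the total mass of the columns indexed by $J_k$ (both equal to $|J_k|$) yields the flow-balance identity $\sum_{i\in J_k}\sum_{j\in J_k^c} M_n(i,j) = \sum_{i\in J_k^c}\sum_{j\in J_k} M_n(i,j)$, so the reversed double series converges as well; and any pair $(i,j)$ with $V_\infty(i)\ne V_\infty(j)$ is, up to exchanging $i$ and $j$, of the form $i\in I_k$, $j\in I_{k'}$ with $k<k'$, hence $i\in J_k$ and $j\in J_k^c$, so both $M_n(i,j)$ and $M_n(j,i)$ are dominated by terms of the two convergent double series.

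To prove that convergence, fix $k$, put $c=\tfrac12(\lambda_k+\lambda_{k+1})$ and $\delta=\tfrac14(\lambda_{k+1}-\lambda_k)>0$, and choose $N$ so that $V_n(i)\le c-\delta$ for $i\in J_k$ and $V_n(i)\ge c+\delta$ for $i\in J_k^c$ whenever $n\ge N$; this is possible since $V_n\to V_\infty$. From $V_{n+1}(i)-c = \sum_j M_{n+1}(i,j)\,(V_n(j)-c)$, the convexity of $t\mapsto t^+$ applied row by row, and the column sums of $M_{n+1}$, one gets $f_{n+1}\le f_n$, so $(f_n)$ converges. For the quantitative decrease I would use that for $n\ge N$ and $i\in J_k$ one has $V_{n+1}(i)<c$, hence $(V_{n+1}(i)-c)^+=0$, whereas $\sum_j M_{n+1}(i,j)(V_n(j)-c)^+\ge\delta\sum_{j\in J_k^c} M_{n+1}(i,j)$ because $(V_n(j)-c)^+\ge\delta$ for $j\in J_k^c$ and vanishes for $j\in J_k$; summing over $i\in J_k$ the nonnegative contributions that add up to $f_n-f_{n+1}$ gives $f_n-f_{n+1}\ge\delta\sum_{i\in J_k}\sum_{j\in J_k^c} M_{n+1}(i,j)$, and telescoping over $n\ge N$ bounds $\sum_n\sum_{i\in J_k}\sum_{j\in J_k^c} M_n(i,j)$ by $\delta^{-1}f_N<\infty$.

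The finite variation is the easy half. The delicate point is the one-sided quantitative gain in the second part: the dispersion $D$ does not visibly control the cross-block flow, which is why I switch to the truncated functional $f_n$; and there remains the genuinely doubly-stochastic ingredient, namely turning a bound on the flow \emph{into} $J_k$ into a bound on the flow \emph{out of} $J_k$ through the flow-balance identity across the partition $(J_k,J_k^c)$.
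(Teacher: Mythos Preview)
Your argument is correct. The first half (finite variation from Lemma~\ref{dispersion}) is exactly what the paper does.

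For the second half you take a different, and in fact cleaner, route. The paper establishes the convergence of $\sum_n\sum_{i\in J_k}\sum_{j\in J_k^c}M_{n+1}(i,j)$ by mimicking the proof of Corollary~\ref{finite variation}: it monitors the partial sums $\sum_{i=1}^{m_k}V_n^\uparrow(i)$ of the sorted vector, invoking Lemma~\ref{increasing sums} (the doubly-stochastic majorization inequality) together with the truncation $U_n(j)=\min(V_n(j),\lambda_k+\varepsilon)$. Your functional $f_n=\sum_i(V_n(i)-c)^+$ does the same job more transparently: its monotonicity comes directly from Jensen plus the column-stochasticity, with no sorting, no permutation $\sigma$, and no appeal to the earlier corollary. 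The two functionals are in fact affinely related for $n\ge N$ (since then $f_n = \text{const} - \sum_{i=1}^{m_k}V_n^\uparrow(i)$), so the underlying mechanism is the same, but your packaging is more elementary and self-contained. The flow-balance identity you use to reverse the roles of $J_k$ and $J_k^c$ is identical to the paper's.
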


\begin{proof}
Lemma~\ref{dispersion} yields 
$\gamma||V_{n+1}-V_n||_1 \le D(V_n)-D(V_{n+1})$ for every $n \ge 0$. 
In particular, the sequence $((D(V_n))_{n \ge 0}$ is non-increasing 
and bounded below by $0$, so it converges. The convergence of 
the series $\sum_n||V_{n+1}-V_n||_1$ and the convergence of the 
sequence $(V_n)_{n \ge 0}$ follow. 

Call $\lambda_1<\ldots<\lambda_r$ the distinct values of  
$\lim_{n \to \infty}V_n(i)$ for $i \in \odc 1,d \fdc$. 
For every $k \in \odc 1,r \fdc$, set 
$I_k = \{i \in \odc 1,d \fdc : \lim_{n \to \infty}V_n(i) = \lambda_k\}$, 
and $J_k = I_1 \cup \ldots \cup I_k$. 

The proof of the convergence of the series 
$\sum_n M_{n+1}(i,j)$ for every $(i,j) \in J_k \times J_k^c$. 
works like the proof of corollary~\ref{finite variation}, 
with $r$ replaced by $1$, thanks to lemma~\ref{increasing sums} 
stated below, so the rearrangement inequality becomes an equality.  

Using the equality
$$\sum_{(i,j) \in J_k^c \times J_k} M_{n+1}(i,j) 
= |J_k| - \sum_{(i,j) \in J_k \times J_k} M_{n+1}(i,j)
= \sum_{(i,j) \in J_k \times J_k^c} M_{n+1}(i,j),$$ 
we derive the convergence of the series 
$\sum_n M_{n+1}(i,j)$ for every $(i,j) \in J_k^c \times J_k$. 
The proof is complete.
\end{proof}

\begin{lemm}\label{increasing sums}
Let $M$ be some $d \times d$ doubly-stochastic matrix. 
Then for every column vector $V \in \rrf^d$ and $m \in \odc 1,d \fdc$ 
$$\sum_{i=1}^m (MV)^\uparrow(i) \ge \sum_{i=1}^m V^\uparrow(i).$$ 
\end{lemm}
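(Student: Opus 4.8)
The plan is to recognize Lemma~\ref{increasing sums} as the classical Hardy--Littlewood--Pólya majorization fact: a doubly-stochastic matrix does not decrease the partial sums of the ordered-increasing components, equivalently $MV$ is majorized by $V$. Since the statement only involves $M$ through the vector $MV$ and involves $V$ only through $V^\uparrow$, I would first reduce to the case where $V$ is already ordered. Choosing a permutation matrix $P$ so that $PV$ has non-decreasing components, one replaces $M$ by $M' = MP^\top$ (still doubly-stochastic, being a product of doubly-stochastic matrices) and $V$ by $V' = PV$; then $M'V' = MV$, so $(M'V')^\uparrow = (MV)^\uparrow$, while $V'^\uparrow = V' = V^\uparrow$ as multisets. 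Hence there is no loss of generality in assuming $V(1) \le \cdots \le V(d)$, so that $V^\uparrow = V$.

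Next I would expand the left-hand side. Let $T \subseteq \odc 1,d \fdc$ be a set of $m$ indices such that the components $(MV)(i)$, $i \in T$, are the $m$ smallest components of $MV$; then $\sum_{i=1}^m (MV)^\uparrow(i) = \sum_{i \in T}(MV)(i)$. Expanding and exchanging the order of summation,
$$\sum_{i \in T}(MV)(i) = \sum_{j=1}^d \Big(\sum_{i \in T} M(i,j)\Big) V(j) = \sum_{j=1}^d \mu_j\, V(j), \qquad \mu_j := \sum_{i \in T} M(i,j).$$
Because each column of $M$ sums to $1$, we get $0 \le \mu_j \le 1$; because each row of $M$ sums to $1$, we get $\sum_{j=1}^d \mu_j = |T| = m$. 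So $(\mu_j)_j$ is a ``fractional selection'' of $m$ coordinates.

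It then remains to prove the elementary inequality $\sum_{j=1}^d \mu_j V(j) \ge \sum_{j=1}^m V(j)$ for any weights with $0 \le \mu_j \le 1$, $\sum_j \mu_j = m$, and any $V(1) \le \cdots \le V(d)$. Set $t = \sum_{j=1}^m (1-\mu_j) = \sum_{j>m}\mu_j \ge 0$. Since $\mu_j - 1 \le 0$ and $V(j) \le V(m)$ for $j \le m$, one has $\sum_{j \le m}(\mu_j-1)V(j) \ge V(m)\sum_{j\le m}(\mu_j-1) = -t\,V(m)$; since $\mu_j \ge 0$ and $V(j) \ge V(m)$ for $j > m$, one has $\sum_{j>m}\mu_j V(j) \ge V(m)\sum_{j>m}\mu_j = t\,V(m)$. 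Adding, $\sum_{j=1}^d\mu_j V(j) - \sum_{j=1}^m V(j) \ge -t\,V(m) + t\,V(m) = 0$, which is the desired inequality (with the convention that the sum over $j>m$ is empty, hence $t=0$, when $m=d$). I do not expect a genuine obstacle here; the only points requiring a little care are the permutation reduction and the $m=d$ boundary case.
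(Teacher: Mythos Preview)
Your proof is correct and follows essentially the same route as the paper: reduce to $V$ ordered, write the sum of the $m$ smallest components of $MV$ as $\sum_j \mu_j V(j)$ with $\mu_j=\sum_{i\in T}M(i,j)\in[0,1]$ and $\sum_j\mu_j=m$, then bound the difference by comparing each term with $V(m)$. The only cosmetic difference is that the paper also permutes the rows of $M$ so that $T=\{1,\ldots,m\}$, whereas you keep a general index set $T$; the computations are otherwise identical.
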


\begin{proof}
By applying a same permutation to the columns of $M$ and 
to the components of $V$, one may assume that 
$V(1) \le \ldots \le V(d)$. By applying a permutation 
to the rows of $M$, one may assume also that 
$(MV)(1) \le \ldots \le (MV)(d)$. 
Since $M$ is doubly-stochastic, the real numbers 
$$S(j) = \sum_{i=1}^m M(i,j) \text{ for } j \in \odc 1,d \fdc$$
are in $[0,1]$ and add up to $m$. 
Morevoer
$$\sum_{i=1}^m (MV)(i) = \sum_{j=1}^d S(j)V(j).$$ 
Hence 
\begin{eqnarray*}
\sum_{i=1}^m (MV)(i) - \sum_{j=1}^m V(j)
&=& \sum_{j=m+1}^d S(j)V(j) + \sum_{j=1}^m (S(j)-1)V(j) \\
&\ge& \sum_{j=m+1}^d S(j)V(m) + \sum_{j=1}^m (S(j)-1)V(m) \\
&=& 0. 
\end{eqnarray*}
We are done. 
\end{proof}

\section{Proof of theorem~\ref{criteria}}

\subsection{Condition for the non-existence of a solution with support included in $\supp(X_0)$}\label{incompatibility}

We assume that $\Gamma$ contains no matrix with support 
included in $\supp(X_0)$, namely that the system
$$\left\{
\begin{array}{r}
\forall i \in \odc 1,p \fdc,~X(i,+) = a_i \\ 
\forall j \in \odc 1,q \fdc,~X(+,j) = b_j \\
\forall (i,j) \in \odc 1,p \fdc \times \odc 1,q \fdc,~X(i,j) \ge 0 \\
\forall (i,j) \in \supp(X_0)^c,~X(i,j) = 0
\end{array}
\right.$$
is inconsistent. 

This system can be seen as a system of linear inequalities of the form 
$\ell(X) \le c$ (where $\ell$ is some linear form and $c$ some constant)
by splitting each equality $\ell(X) = c$ into the two inequalities 
$\ell(X) \le c$ and $\ell(X) \ge c$, and by transforming each inequality 
$\ell(X) \ge c$ into the equivalent inequality $-\ell(X) \le -c$. 
But theorem~4.2.3 in \cite{Webster} (a consequence of Farkas' or 
Fourier's lemma) states that a system of linear inequalities of 
the form $\ell(X) \le c$ is inconsistent 
if and only if some linear combination with non-negative weights of 
the linear inequalities yields the inequality $0 \le -1$.

Consider such a linear combination and call $\alpha_{i,+}$, $\alpha_{i,-}$
$\beta_{j,+}$, $\beta_{j,-}$, $\gamma_{i,j,+}$, $\gamma_{i,j,-}$ the weights 
associated to the inequalities
$X(i,+) \le a_i$, $-X(i,+) \le -a_i$, $X(+,j) \le b_j$, $-X(+,j) \le -b_j$,
$X(i,j) \le 0$, $-X(i,j) \le 0$. When $(i,j) \in \supp(X_0)$, the inequality 
$X(i,j) \le 0$ does not appear in the system, so we set $\gamma_{i,j,+}=0$. 
Then the real numbers 
$\alpha_i := \alpha_{i,+}-\alpha_{i,-}$, 
$\beta_j := \beta_{j,+}-\beta_{j,-}$,
$\gamma_{i,j} := \gamma_{i,j,+} - \gamma_{i,j,-}$
satisfy the following conditions:
\begin{itemize}
\item for every $(i,j) \in \odc 1,p \fdc \times \odc 1,q \fdc$, 
$\alpha_i + \beta_j + \gamma_{i,j} = 0$, 
\item $\displaystyle{\sum_{i=1}^p \alpha_ia_i + \sum_{j=1}^q \beta_jb_j = -1,}$
\item $\gamma_{i,j} \le 0$ whenever $(i,j) \in \supp(X_0)$.
\end{itemize}
Let $U$ and $V$ be two random variables with respective laws  
$$\sum_{i=1}^p a_i\delta_{\alpha_i} \text{ and } \sum_{j=1}^q b_j\delta_{-\beta_j}.$$
Then 
$$\int_\rrf \big(P[U > t] - P[V > t]\big) \d t
= \eef[U]-\eef[V] = \sum_{i=1}^p \alpha_ia_i + \sum_{j=1}^q \beta_jb_j = -1 < 0,$$ 
so there exists some real number $t$ such that $P[U > t] - P[V > t] < 0$.
Consider the sets $A = \{i \in \odc 1,p \fdc : \alpha_i \le t\}$ and
$B = \{j \in \odc 1,q \fdc : \beta_j < -t\}$. Then for every 
$(i,j) \in A \times B$, $-\gamma_{i,j} = \alpha_i + \beta_j < 0$, so 
$(i,j) \notin \supp(X_0)$. In other words, $X_0$ is null on $A \times B$. 
Moreover,
$$a(A)-b(B^c) 
= \sum_{i \in A} a_i - \sum_{j \in B^c} b_j 
= P[U \le t] - P[-V \ge -t] = P[V>t]-P[U>t] > 0.$$
Hence the block $A \times B$ is a cause of incompatibility. The proof is 
complete.

\subsection{Condition for the existence of additional zeroes shared by every solution in $\Gamma(X_0)$}\label{additional zeroes}

We now assume that $\Gamma$ contains some matrix with support 
included in $\supp(X_0)$.

Using the convexity of $\Gamma(X_0)$, one can construct a matrix 
$S_0 \in \Gamma(X_0)$ whose support contains the support of every 
matrix in $\Gamma(X_0)$. This yields item~(a). 

We now prove items~(b) and~(c). The observations made in the introduction 
show that $\supp(S_0) \subset \supp(X_0)$ and that $S_0$ is null on 
$A^c \times B^c$ whenever $A \times B$ is a non-empty subset of 
$\odc 1,p \fdc \times \odc 1,q \fdc$ such that $X_0$ is 
null on $A \times B$ and $a(A)=b(B^c)$. This yields the `if' part of item~(b)
and one inclusion in item~(c). 

To prove the `only if' part of item~(b) and the reverse inclusion in item~(c), 
fix $(i_0,j_0) \in \supp(X_0) \setminus \supp(S_0)$.
Then for every $p \times q$ matrix $X$ with real entries, 
$$\left.
\begin{array}{r}
\forall i \in \odc 1,p \fdc,~X(i,+)=a_i \\ 
\forall j \in \odc 1,q \fdc,~X(+,j)=b_j \\
\forall (i,j) \in \odc 1,p \fdc \times \odc 1,q \fdc,~X(i,j) \ge 0 \\
\forall (i,j) \in \supp(X_0)^c,~X(i,j) = 0
\end{array}
\right\} \implies X(i_0,j_0) \le 0.$$
The system in the left-hand side of the implication is consistent since
$\Gamma(X_0)$ is non-empty by assumption. 
As before, the system at the left-hand side of the implication can be 
seen as a system of linear inequalities of the form $\ell(X) \le c$. 

We now use theorem~4.2.7 in~\cite{Webster} 
(a consequence of Farkas' or Fourier's lemma)
which states that any linear inequation which is a consequence 
of some consistent system of linear inequalities of the form 
$\ell(X) \le c$ can be deduced from the system and from the 
inequality $0 \le 1$ by linear combinations with non-negative weights. 

Consider such a linear combination and call $\alpha_{i,+}$, $\alpha_{i,-}$
$\beta_{j,+}$, $\beta_{j,-}$, $\gamma_{i,j,+}$, $\gamma_{i,j,-}$ and $\eta$ 
the weights associated to the inequalities
$X(i,+) \le a_i$, $-X(i,+) \le -a_i$, $X(+,j) \le b_j$, $-X(+,j) \le -b_j$,
$X(i,j) \le 0$, $-X(i,j) \le 0$, and $0 \le 1$. 
When $(i,j) \in \supp(X_0)$, the inequality 
$X(i,j) \le 0$ does not appear in the system, so we set $\gamma_{i,j,+}=0$. 
Then the real numbers 
$\alpha_i := \alpha_{i,+}-\alpha_{i,-}$, 
$\beta_j := \beta_{j,+}-\beta_{j,-}$,
$\gamma_{i,j} := \gamma_{i,j,+} - \gamma_{i,j,-}$
satisfy the following conditions. 
%
\begin{itemize}
\item for every $(i,j) \in \odc 1,p \fdc \times \odc 1,q \fdc$, 
$\alpha_i + \beta_j + \gamma_{i,j} = \delta_{i,i_0}\delta_{j,j_0}$, 
\item $\displaystyle{\sum_{i=1}^p \alpha_ia_i + \sum_{j=1}^q \beta_jb_j 
+ \eta = 0.}$
\item $\gamma_{i,j} \le 0$ whenever $(i,j) \in \supp(X_0)$,
\item $\eta \ge 0$
\end{itemize}
Let $U$ and $V$ be two random variables with respective laws  
$$\sum_{i=1}^p a_i\delta_{\alpha_i} \text{ and } \sum_{j=1}^q b_j\delta_{-\beta_j}.$$
Then 
$$\int_\rrf \big( P[U>t] - P[V>t] \big)~\d t
= \eef[U]-\eef[V] = \sum_{i=1}^p \alpha_ia_i + \sum_{j=1}^q \beta_jb_j 
= -\eta \le 0.$$
Set $u_0 = \min(\alpha_1,\ldots,\alpha_p) = \mathrm{ess}\inf U$ and 
$v_0 = \max(-\beta_1,\ldots,-\beta_q) = \mathrm{ess}\sup V$. 
Then $P[U>t] - P[V>t] \ge 0$ when $t < u_0$ or $t \ge v_0$.
Hence one can find $t \in [u_0,v_0[$ such that $P[U>t] - P[V>t] \le 0$.
Consider the sets $A = \{i \in \odc 1,p \fdc : \alpha_i \le t\}$ and
$B = \{j \in \odc 1,q \fdc : \beta_j < -t\}$. For every 
$(i,j) \in A \times B$, 
$\gamma_{i,j} = \delta_{i,i_0}\delta_{j,j_0} - \alpha_i - \beta_j > 0$, 
so $(i,j) \notin \supp(X_0)$. In other words, $X_0$ is null on $A \times B$. 
Moreover, 
$$a(A) = \sum_{i \in A} a_i = P[U \le t] \ge P[U = u_0] > 0,$$
$$b(B) = \sum_{j \in B} b_j = P[-V < -t] \ge P[V = v_0] > 0,$$
so $A$ and $B$ are non-empty and
$$a(A)-b(B^c) 
= P[U \le t] - P[V \le t] = P[V>t] - P[U>t] \ge 0.$$
The last inequality is necessarily an equality, since otherwise
$A \times B$ would be a cause of incompatibility. Hence $a(A)=b(B^c)$, so 
$A \times B$ is cause of criticality. 
This proves the `only if' part of item~(b).  

We also know that $(i_0,j_0) \notin A \times B$ since $X_0(i_0,j_0)>0$. 
If we had $(i_0,j_0) \in A^c \times B^c$, we would get the 
reverse inclusion in item~(c). Unfortunately, this statement may fail 
with the choice of $t$ made above. 

Assume, that $(i_0,j_0) \notin A^c \times B^c$. Since $a(A)=b(B^c)$, 
the linear system defining $\Gamma(a,b,X_0)$ can be split into  
three independent consistent subsystems, namely 
$$\forall (i,j) \in (A \times B) \cup (A^c \times B^c), X(i,j) = 0$$
and the two systems 
$$\left\{
\begin{array}{l}
\forall i \in I,~X(i,J)=a_i \\ 
\forall j \in J,~X(I,j)=b_j \\
\forall (i,j) \in I \times J,~X(i,j) \ge 0 \\
\forall (i,j) \in (I \times J) \setminus \supp(X_0)^c,~X(i,j) = 0
\end{array}\right.
$$
where the block $I \times J$ is either $A \times B^c$ or $A^c \times B$.

If $(i_0,j_0) \notin A^c \times B^c$, then $(i_0,j_0)$ belongs to one of 
these two blocks, say $I_1 \times J_1$. Since $a(I_1) = b(J_1)$ and 
since the equality $X(i_0,j_0) = 0$ is a consequence of the 
consistent subsystem above with $I \times J = I_1 \times J_1$, one can apply 
the proof of item~(b) to the marginals $a(\cdot|I_1)$ and $b(\cdot|J_1)$ 
and the restriction of $X_0$ on $I_1 \times J_1$. This yields 
a subset $A_1 \times B_1$ of $I_1 \times J_1$ such that $X_0$ is null on 
$A_1 \times B_1$, $a(A_1)=b(B_1^c)$, and $(i_0,j_0) \notin A_1 \times B_1$. 

If $(i_0,j_0) \in A_1^c \times B_1^c$, we are done. Otherwise, $(i_0,j_0)$ 
belongs to one of the two blocks $A_1 \times B_1^c$ or $A_1^c \times B_1$, say  
$I_2 \times J_2$, and the recursive construction goes on. This construction 
necessarily stops after a finite number of steps and produces a set 
$A' \times B'$ such that $X_0$ is null on $A' \times B'$, $a(A')=b(B'^c)$, 
and $(i_0,j_0) \in A'^c \times B'^c$. Item~(c) follows.




\section{Tools and preliminary results}\label{tools}

\subsection{Results on the quantities $R_i(X_{2n})$ and $C_j(X_{2n+1})$}

\begin{lemm}\label{first equalities}
Let $X \in \Gamma_1$. Then
$$\sum_{i=1}^q a_i R_i(X) = \sum_{i,j} X(i,j) = \sum_j b_j = 1$$ 
and for every $j \in \odc 1,q \fdc$, 
$$C_j(T_R(X)) = \sum_{i=1}^p \frac{X(i,j)}{b_j} R_i(X)^{-1}.$$
When $X \in \Gamma_C$, this equality expresses $C_j(T_R(X))$ as 
a weighted (arithmetic) mean of the quantities $R_i(X)^{-1}$, 
with weights $X(i,j)/b_j$.  
\end{lemm}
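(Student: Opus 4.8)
The statement of Lemma~\ref{first equalities} is a direct computation from the definitions, so the plan is to unwind the notation and verify the three claims in turn. The only genuinely substantive observation is that when $X \in \Gamma_C$ the numbers $X(i,j)/b_j$ do form a probability vector in $i$, but even that is immediate from the definition of $\Gamma_C$. I expect there to be no real obstacle here; the ``hard part'' is merely keeping the indices straight.

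\textbf{First claim.} I would start from the definition $R_i(X) = X(i,+)/a_i$, so $a_i R_i(X) = X(i,+) = \sum_{j} X(i,j)$. Summing over $i \in \odc 1,p \fdc$ gives $\sum_i a_i R_i(X) = \sum_{i,j} X(i,j) = X(+,+)$, which equals $1$ because $X \in \Gamma_1$. Writing $X(+,+) = \sum_j X(+,j) = \sum_j b_j C_j(X)$ is not needed, but one may equally note $\sum_{i,j} X(i,j) = \sum_j b_j$ only after $X$ is assumed to have the right column marginals; in general it is just $\sum_{i,j}X(i,j)=1=\sum_j b_j$. (The displayed chain in the lemma uses $\sum_j b_j = 1$, which holds by the standing normalization on $b$.) So the first line is just: $\sum_i a_i R_i(X) = X(+,+) = 1 = \sum_j b_j$.

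\textbf{Second claim.} Set $Y = T_R(X)$, so $Y(i,j) = R_i(X)^{-1} X(i,j)$ by definition of $T_R$. Then
\begin{eqnarray*}
C_j(Y) &=& \frac{Y(+,j)}{b_j} = \frac{1}{b_j}\sum_{i=1}^p Y(i,j) = \frac{1}{b_j}\sum_{i=1}^p R_i(X)^{-1} X(i,j) = \sum_{i=1}^p \frac{X(i,j)}{b_j} R_i(X)^{-1},
\end{eqnarray*}
which is exactly the asserted formula. Note all the $R_i(X)$ are positive because $X \in \Gamma_1 \subset \Gamma_0$ has positive row sums, so the inverses are well-defined.

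\textbf{Third claim.} Finally, if moreover $X \in \Gamma_C$, then by definition $X(+,j) = b_j$ for each $j$, i.e.\ $\sum_{i=1}^p X(i,j)/b_j = 1$; and each term $X(i,j)/b_j$ is non-negative. Hence the coefficients $(X(i,j)/b_j)_{1 \le i \le p}$ appearing in the formula for $C_j(T_R(X))$ are the weights of a probability measure on $\odc 1,p\fdc$, so $C_j(T_R(X))$ is a weighted arithmetic mean of the values $R_1(X)^{-1},\ldots,R_p(X)^{-1}$, as claimed. This is the form that will be exploited later (e.g.\ via convexity/Jensen-type arguments), so it is worth recording explicitly.
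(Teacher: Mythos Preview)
Your proof is correct. The paper actually gives no proof of this lemma at all, treating it as an immediate consequence of the definitions; your write-up is precisely the natural verification one would supply.
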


For every $X \in \Gamma_1$, call $R(X)$ the column vecteur with 
components $R_1(X),\ldots,R_p(X)$ and $C(X)$ the column vecteur with 
components $C_1(X),\ldots,C_q(X)$. Set 
$$\underline{R}(X) = \min_i R_i(X),\ 
\overline{R}(X) = \max_i R_i(X),\
\underline{C}(X) = \min_j C_j (X),\ 
\overline{C}(X) = \max_i C_j(X).$$

\begin{coro}\label{nested intervals}
The intervals 
$$[\overline{C}(X_1)^{-1},\underline{C}(X_1)^{-1}],~ 
[\underline{R}(X_2),\overline{R}(X_2)],~ 
[\overline{C}(X_3)^{-1},\underline{C}(X_3)^{-1}],~ 
[\underline{R}(X_4),\overline{R}(X_4)],\cdots$$
contain $1$ and form a non-increasing sequence.  
\end{coro}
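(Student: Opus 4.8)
The plan is to exploit the mechanism already isolated in Lemma~\ref{first equalities}: one step of the IPFP turns the numbers $R_i$ into weighted arithmetic means of the numbers $C_j^{-1}$ and vice versa, while the trivial normalisations $\sum_i a_iR_i(X)=1$ and $\sum_j b_jC_j(X)=1$ (valid for every $X\in\Gamma_1$, the second one because $\sum_j b_jC_j(X)=\sum_j X(+,j)=X(+,+)=1$) keep $1$ inside the relevant interval. By symmetry (exchanging rows with columns and $a$ with $b$), Lemma~\ref{first equalities} also gives, for every $X\in\Gamma_R$, the formula $R_i(T_C(X))=\sum_j (X(i,j)/a_i)\,C_j(X)^{-1}$, which exhibits $R_i(T_C(X))$ as a weighted arithmetic mean of the $C_j(X)^{-1}$ with weights $X(i,j)/a_i$ summing to $R_i(X)=1$. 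Throughout I use two elementary remarks: a weighted arithmetic mean of finitely many reals lying in $[m,M]$ again lies in $[m,M]$; and since all the $R_i(X)$ and $C_j(X)$ are positive, $C_j(X)^{-1}\in[\overline C(X)^{-1},\underline C(X)^{-1}]$ and $R_i(X)^{-1}\in[\overline R(X)^{-1},\underline R(X)^{-1}]$.

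Recall that $X_{2n-1}=T_R(X_{2n-2})\in\Gamma_R$ and $X_{2n}=T_C(X_{2n-1})\in\Gamma_C$ for $n\ge1$, and that $X_n\in\Gamma_1$ for all $n$. For the claim that each interval contains $1$: for an odd index $2n-1$, the identity $\sum_j b_jC_j(X_{2n-1})=1$ forces $\underline C(X_{2n-1})\le 1\le\overline C(X_{2n-1})$, hence $\overline C(X_{2n-1})^{-1}\le 1\le\underline C(X_{2n-1})^{-1}$, so $1$ lies in the interval attached to $X_{2n-1}$; for an even index $2n$, the identity $\sum_i a_iR_i(X_{2n})=1$ of Lemma~\ref{first equalities} gives $\underline R(X_{2n})\le 1\le\overline R(X_{2n})$, so $1$ lies in the interval attached to $X_{2n}$.

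For the nesting I treat the two kinds of consecutive pairs. Going from $X_{2n-1}$ to $X_{2n}=T_C(X_{2n-1})$: since $X_{2n-1}\in\Gamma_R$, each $R_i(X_{2n})$ is a weighted arithmetic mean of the numbers $C_j(X_{2n-1})^{-1}$, all of which lie in $[\overline C(X_{2n-1})^{-1},\underline C(X_{2n-1})^{-1}]$, so $R_i(X_{2n})$ lies in that interval for every $i$; taking the smallest and largest such value yields $[\underline R(X_{2n}),\overline R(X_{2n})]\subseteq[\overline C(X_{2n-1})^{-1},\underline C(X_{2n-1})^{-1}]$. Going from $X_{2n}$ to $X_{2n+1}=T_R(X_{2n})$: since $X_{2n}\in\Gamma_C$, Lemma~\ref{first equalities} expresses each $C_j(X_{2n+1})$ as a weighted arithmetic mean of the numbers $R_i(X_{2n})^{-1}$, all of which lie in $[\overline R(X_{2n})^{-1},\underline R(X_{2n})^{-1}]$, so $C_j(X_{2n+1})$ lies in that interval; inverting, $C_j(X_{2n+1})^{-1}\in[\underline R(X_{2n}),\overline R(X_{2n})]$ for every $j$, and taking the smallest and largest such value (note $\overline C(X_{2n+1})^{-1}=\min_j C_j(X_{2n+1})^{-1}$ and $\underline C(X_{2n+1})^{-1}=\max_j C_j(X_{2n+1})^{-1}$) gives $[\overline C(X_{2n+1})^{-1},\underline C(X_{2n+1})^{-1}]\subseteq[\underline R(X_{2n}),\overline R(X_{2n})]$. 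Chaining these inclusions along the sequence $X_1,X_2,X_3,\ldots$ shows that the displayed intervals form a non-increasing sequence.

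There is no substantial obstacle in this argument; essentially everything is read off from Lemma~\ref{first equalities}. The only point that needs care is the bookkeeping with reciprocals — inverting a positive interval $[m,M]$ reverses its endpoints to $[M^{-1},m^{-1}]$ — which is precisely why the odd-indexed intervals have to be written in the form $[\overline C(X_k)^{-1},\underline C(X_k)^{-1}]$ for the sequence to be genuinely nested.
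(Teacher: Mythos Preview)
Your proof is correct and follows precisely the route the paper intends: the corollary is stated in the paper without proof, as an immediate consequence of Lemma~\ref{first equalities}, and you have simply written out the details. The two ingredients you use --- the weighted-mean formulas for $C_j(T_R(X))$ (and its row/column symmetric counterpart for $R_i(T_C(X))$) and the normalisations $\sum_i a_iR_i(X)=\sum_j b_jC_j(X)=1$ --- are exactly what the paper records in Lemma~\ref{first equalities}, so there is nothing to compare.
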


In lemma~\ref{first equalities}, one can invert the roles of the 
lines and the columns. Given $X \in \Gamma_C$, the matrix $T_R(X)$ 
is in $\Gamma_R$ so the quantities $R_i(T_C(T_R(X)))$ can be written as 
weighted (arithmetic) means of the $C_j(T_R(X))^{-1}$. But the 
$C_j(T_R(X))$ can be written as a weighted (arithmetic) means 
of the quantities $R_k(X)^{-1}$. Putting things together, one gets 
weighted arithmetic means of weighted harmonic means. 
Next lemma shows how to transform these into weighted arithmetic 
means by modifying the weights. 

\begin{lemm}\label{stochastic matrix}
Let $X \in \Gamma_C$. Then $R(T_C(T_R(X))) = P(X)R(X)$, where 
$P(X)$ is the $p \times p$ matrix given by
$$P(X)(i,k) = \sum_{j=1}^q \frac{T_R(X)(i,j)T_R(X)(k,j)}{a_ib_jC_j(T_R(X))}.$$
The matrix $P(X)$ is stochastic. Moreover it satisfies 
for every $i$ and $k$ in $\odc 1,p \fdc$, 
$$P(X)(i,i) \ge \frac{\underline{a}}{\overline{b}~\overline{C}(T_R(X))q}$$
and
$$P(X)(k,i) \le \frac{\overline{a}}{\underline{a}} P(X)(i,k).$$
\end{lemm}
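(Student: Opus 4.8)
The plan is to verify each of the three claims about $P(X)$ by direct computation from the definition, exploiting the special structure of the entries.

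First I would establish the identity $R(T_C(T_R(X))) = P(X)R(X)$. Write $Y = T_R(X)$, so $Y(i,j) = R_i(X)^{-1}X(i,j)$ and $Y \in \Gamma_R$. Then $Z = T_C(Y)$ has $Z(i,j) = C_j(Y)^{-1}Y(i,j)$, and I compute
$$R_i(Z) = a_i^{-1}\sum_j Z(i,j) = a_i^{-1}\sum_j \frac{Y(i,j)}{C_j(Y)}.$$
Now I use lemma~\ref{first equalities} applied with the roles of rows and columns exchanged: since $Y \in \Gamma_R$, each $C_j(Y)^{-1}$ -- or rather the reciprocal structure -- lets me write $1/C_j(Y)$ as a weighted harmonic combination. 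Concretely, from $C_j(Y) = b_j^{-1}\sum_k Y(k,j)$ and the fact that we want to re-express things in terms of $R_k(X)^{-1}$, I substitute $Y(k,j) = R_k(X)^{-1}X(k,j)$ and expand. The key algebraic step is to insert a factor $\sum_k Y(k,j)/(b_j C_j(Y)) = 1$ and rearrange, which produces exactly
$$R_i(Z) = \sum_k \Big(\sum_j \frac{Y(i,j)Y(k,j)}{a_i b_j C_j(Y)}\Big) R_k(X),$$
giving the matrix $P(X)$ as claimed. That $P(X)$ is stochastic then follows by checking $\sum_k P(X)(i,k) = 1$: summing over $k$ pulls out $\sum_k Y(k,j) = b_j C_j(Y)$, leaving $\sum_j Y(i,j)/a_i = R_i(Y) = 1$ since $Y \in \Gamma_R$. (Alternatively one can note $P(X)$ maps the all-ones vector appropriately, but the direct sum is cleanest.)

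For the diagonal lower bound, I isolate the $k = i$ term: $P(X)(i,i) = \sum_j Y(i,j)^2/(a_i b_j C_j(Y))$. I want to bound this below, so I keep all terms (they are non-negative) and use $b_j \le \overline{b}$, $C_j(Y) \le \overline{C}(Y)$ in the denominator, then $\sum_j Y(i,j)^2 \ge (\sum_j Y(i,j))^2/q = a_i^2/q$ by Cauchy--Schwarz together with $R_i(Y)=1$. This yields $P(X)(i,i) \ge a_i^2/(a_i \overline{b}\,\overline{C}(Y)\,q) = a_i/(\overline{b}\,\overline{C}(Y)\,q) \ge \underline{a}/(\overline{b}\,\overline{C}(T_R(X))\,q)$, as desired. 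For the near-symmetry, I compare $P(X)(i,k)$ and $P(X)(k,i)$ termwise: $P(X)(i,k)$ has summand $Y(i,j)Y(k,j)/(a_i b_j C_j(Y))$ while $P(X)(k,i)$ has $Y(i,j)Y(k,j)/(a_k b_j C_j(Y))$; the numerators and the $b_j C_j(Y)$ factors agree, so the ratio of corresponding summands is exactly $a_k/a_i \le \overline{a}/\underline{a}$, and summing over $j$ gives $P(X)(k,i) \le (\overline{a}/\underline{a}) P(X)(i,k)$.

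I do not expect a serious obstacle here; the one point requiring care is the bookkeeping in the first step, namely correctly tracking which quantities are harmonic versus arithmetic means and inserting the normalizing factor $\sum_k Y(k,j) = b_j C_j(Y)$ in the right place so that the double sum collapses to the stated formula for $P(X)(i,k)$. Once that identity is in hand, the stochasticity and the two estimates are short termwise comparisons.
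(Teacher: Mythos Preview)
Your approach is essentially the same as the paper's, and the verifications of stochasticity, the diagonal lower bound, and the near-symmetry are all correct and match the paper's argument.

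There is, however, a bookkeeping slip in the one place you flagged as delicate. The factor you propose to insert, $\sum_k Y(k,j)/(b_j C_j(Y)) = 1$, is a tautology (it is just the definition of $C_j(Y)$) and carries no $R_k(X)$. If you insert it into $R_i(Z) = a_i^{-1}\sum_j Y(i,j)/C_j(Y)$ you get $\sum_{j,k} Y(i,j)Y(k,j)/(a_i b_j C_j(Y)^2)$, with a spurious second power of $C_j(Y)$ and no $R_k(X)$ anywhere. The correct move, and the one the paper makes, uses the hypothesis $X \in \Gamma_C$: one inserts
\[
1 = \frac{1}{b_j}\sum_k X(k,j) = \frac{1}{b_j}\sum_k Y(k,j)\,R_k(X),
\]
where the second equality is $X(k,j) = R_k(X)\,Y(k,j)$. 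Substituting this into $R_i(Z) = a_i^{-1}\sum_j Y(i,j)/C_j(Y)$ yields exactly the displayed double sum with a single $C_j(Y)$ in the denominator and the factor $R_k(X)$ in place. This is the only spot where $X \in \Gamma_C$ enters, so it is worth getting right; once corrected, the rest of your write-up goes through unchanged.
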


\begin{proof}
For every $i \in \odc 1,p \fdc$, 
\begin{eqnarray*}
R_i(T_C(T_R(X))) 
= \sum_{j=1}^q \frac{T_R(X)(i,j)}{a_i} \frac{1}{C_j(T_R(X))}. 
\end{eqnarray*}
But the assumption $X \in \Gamma_C$ yields 
$$1 = \frac{1}{b_j}\sum_{k=1}^p X(k,j) 
= \frac{1}{b_j}\sum_{k=1}^p T_R(X)(k,j) R_k(X).$$
Hence,
\begin{eqnarray*}
R_i(T_C(T_R(X)))
= \sum_{j=1}^q \frac{T_R(X)(i,j)}{a_i} 
\sum_{k=1}^p \frac{T_R(X)(k,j)}{b_jC_j(T_R(X))} R_k(X).
\end{eqnarray*}
These equalities can be written as $R(T_C(T_R(X))) = P(X)R(X)$, 
where $P(X)$ is the $p \times p$ matrix whose entries are given 
in the statement of lemma~\ref{stochastic matrix}. 
By construction, the entries of $P(X)$ are non-negative and 
for every $i \in \odc 1,p \fdc$, 
$$\sum_{k=1}^p P(X)(i,k) 
= \sum_{j=1}^q \frac{T_R(X)(i,j)}{a_ib_jC_j(T_R(X))}\sum_{k=1}^p T_R(X)(k,j)
= \sum_{j=1}^q \frac{T_R(X)(i,j)}{a_i} = 1.$$
Moreover, since
$$\sum_{j=1}^q T_R(X)(i,j)^2 
\ge \frac{1}{q}\Big(\sum_{j=1}^q T_R(X)(i,j)\Big)^2 = \frac{a_i^2}{q}$$
we have 
\begin{eqnarray*}
P(X)(i,i) 
&\ge& \frac{1}{a_i~\overline{b}~\overline{C}(T_R(X))}
\sum_{j=1}^q T_R(X)(i,j)^2 \\
&=& \frac{a_i}{\overline{b}~\overline{C}(T_R(X))~q} \\
&\ge& \frac{\underline{a}}{\overline{b}~\overline{C}(T_R(X))~q}.
\end{eqnarray*}
The last inequality to be proved follows directly from the symmetry 
of the matrix $(a_iP(X)(i,k))_{1 \le i,k \le p}$. 
\end{proof}

\subsection{A function associated to each element of $\Gamma_1$}

\begin{defi}
For every $X$ and $S$ in $\Gamma_1$, we set
$$F_S(X) = \prod_{(i,j) \in \odc 1,p \fdc \times \odc 1,q \fdc} X(i,j)^{S(i,j)},$$
with the convention $0^0=1$. 
\end{defi}

We note that $0 \le F_S(X) \le 1$, and that $F_S(X)>0$ if and 
only if $\supp(S) \subset \supp(X)$. 

\begin{lemm}\label{upper bound}
Let $S \in \Gamma_1$. For every $X \in \Gamma_1$, $F_S(X) \le F_S(S)$, 
with equality if and only if $X=S$. Moreover, if $\supp(S) \subset \supp(X)$, 
then $D(S||X) = \ln(F_S(S)/F_S(X))$.  
\end{lemm}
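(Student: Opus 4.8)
The plan is to reduce everything to the single inequality $\ln t \le t-1$, valid for all $t>0$ with equality iff $t=1$, together with the fact that both $S$ and $X$ are probability measures (their entries sum to $1$). First I would dispose of the degenerate case: if $\supp(S) \not\subset \supp(X)$, then some $(i,j)$ has $S(i,j)>0$ and $X(i,j)=0$, so $F_S(X)=0 < F_S(S)$ (note $F_S(S)>0$ always, since $\supp(S)\subset\supp(S)$ trivially), and the equality case is vacuous. So from now on assume $\supp(S)\subset\supp(X)$, and all the products and sums below are implicitly restricted to $\supp(S)$, where everything is strictly positive.

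The core computation is to look at $\ln\bigl(F_S(X)/F_S(S)\bigr)$. By definition,
\begin{eqnarray*}
\ln\frac{F_S(X)}{F_S(S)}
&=& \sum_{(i,j)\in\supp(S)} S(i,j)\,\ln\frac{X(i,j)}{S(i,j)}.
\end{eqnarray*}
Apply $\ln t \le t-1$ with $t = X(i,j)/S(i,j)$ to each term:
\begin{eqnarray*}
\ln\frac{F_S(X)}{F_S(S)}
&\le& \sum_{(i,j)\in\supp(S)} S(i,j)\Bigl(\frac{X(i,j)}{S(i,j)}-1\Bigr)
= \sum_{(i,j)\in\supp(S)} \bigl(X(i,j)-S(i,j)\bigr)
\le 1 - 1 = 0,
\end{eqnarray*}
where the last step uses $\sum_{(i,j)\in\supp(S)} X(i,j) \le \sum_{i,j} X(i,j) = 1$ and $\sum_{(i,j)\in\supp(S)} S(i,j) = 1$ (since $S\in\Gamma_1$ is supported on $\supp(S)$). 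This gives $F_S(X)\le F_S(S)$. For the equality case: equality forces $\ln t = t-1$ in every term, i.e. $X(i,j)=S(i,j)$ for all $(i,j)\in\supp(S)$, AND it forces $\sum_{(i,j)\in\supp(S)} X(i,j) = 1$, i.e. $X$ puts no mass outside $\supp(S)$; together these say $X=S$. Conversely $X=S$ obviously gives equality.

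Finally, the last assertion is immediate once the products are unwound: when $\supp(S)\subset\supp(X)$,
\begin{eqnarray*}
\ln\frac{F_S(S)}{F_S(X)}
&=& \sum_{(i,j)\in\supp(S)} S(i,j)\,\ln\frac{S(i,j)}{X(i,j)}
= \sum_{(i,j)\in\supp(X)} S(i,j)\,\ln\frac{S(i,j)}{X(i,j)}
= D(S||X),
\end{eqnarray*}
the middle equality holding because the extra terms $(i,j)\in\supp(X)\setminus\supp(S)$ contribute $0\ln(0/X(i,j))=0$ by the stated convention. There is no real obstacle here; the only point requiring a little care is to keep track of which index set the sums range over and to notice that it is the inequality $\sum_{\supp(S)} X(i,j)\le 1$ (rather than $=1$) that is responsible for the "$X=S$" half of the equality condition — this is exactly why the support inclusion, not just agreement on $\supp(S)$, is needed.
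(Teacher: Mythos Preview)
Your proof is correct and follows essentially the same route as the paper's: the paper invokes the arithmetic--geometric mean inequality on the ratios $X(i,j)/S(i,j)$ with weights $S(i,j)$, which is exactly your termwise application of $\ln t \le t-1$. If anything, your write-up is more careful than the paper's --- you explicitly treat the case $\supp(S)\not\subset\supp(X)$ and you separate the two sources of equality (agreement on $\supp(S)$ plus no mass of $X$ outside $\supp(S)$), whereas the paper leaves the latter implicit.
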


\begin{proof}
Assume that $\supp(S) \subset \supp(X)$. The definition of $F_S$ and the 
arithmetic-geometric inequality yield
$$\frac{F_S(S)}{F_S(X)} 
= \prod_{i,j} \Big(\frac{X(i,j}{S(i,j)}\Big)^{S(i,j)} 
\le \sum_{i,j} S(i,j) \Big(\frac{X(i,j}{S(i,j)}\Big) 
= \sum_{i,j}X(i,j) = 1,$$
with equality if and only if $X(i,j) = S(i,j)$ for every 
$(i,j) \in \supp(S)$. The result follows. 
\end{proof}

\begin{lemm}\label{effect of $T_R$ and $T_C$}
Let $X \in \Gamma_1$. 
\begin{itemize}
\item For every $S \in \Gamma_R$ such that $\supp(S) \subset \supp(X)$, 
one has $F_S(X) \le F_S(T_R(X))$, and the ratio $F_S(X)/F_S(T_R(X))$
does not depend on $S$. 
\item For every $S \in \Gamma_C$ such that $\supp(S) \subset \supp(X)$, 
one has $F_S(X) \le F_S(T_C(X))$, and the ratio $F_S(X)/F_S(T_C(X))$
does not depend on $S$.
\end{itemize}
\end{lemm}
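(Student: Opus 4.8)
The plan is to compute $F_S(T_R(X))$ explicitly in terms of $F_S(X)$ and the row ratios $R_i(X)$, and then to bound the resulting correction factor by the weighted arithmetic--geometric mean inequality. First I would record that $R_i(X)>0$ for every $i$ (since $X \in \Gamma_1 \subset \Gamma_0$ forces $X(i,+)>0$ and $a_i>0$), so that $T_R(X)(i,j) = R_i(X)^{-1}X(i,j)$ has exactly the same support as $X$; in particular the hypothesis $\supp(S) \subset \supp(X)$ yields $\supp(S) \subset \supp(T_R(X))$ as well, both $F_S(X)$ and $F_S(T_R(X))$ are positive, and no indeterminate form arises beyond the harmless $0^0=1$.

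Next I would factor the product defining $F_S(T_R(X))$: pulling out the scalars $R_i(X)^{-S(i,j)}$ and summing the exponents over $j$ gives
$$F_S(T_R(X)) = \Big(\prod_{i=1}^p R_i(X)^{-S(i,+)}\Big)\, F_S(X).$$
When $S \in \Gamma_R$ one has $S(i,+) = a_i$ for every $i$, so $F_S(X)/F_S(T_R(X)) = \prod_{i=1}^p R_i(X)^{a_i}$, a quantity depending only on $X$ (and on the fixed marginal $a$), not on $S$; this proves the second assertion of the first bullet. It then remains to see that $\prod_i R_i(X)^{a_i} \le 1$. Since the weights $a_i$ are positive and sum to $1$, the weighted arithmetic--geometric mean inequality gives $\prod_i R_i(X)^{a_i} \le \sum_i a_i R_i(X)$, and by lemma~\ref{first equalities} the right-hand side equals $\sum_i X(i,+) = X(+,+) = 1$ because $X \in \Gamma_1$. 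Hence $F_S(X) \le F_S(T_R(X))$.

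The second bullet follows by the same argument with the roles of rows and columns exchanged (equivalently, applied to transposes): $T_C(X)(i,j) = C_j(X)^{-1}X(i,j)$ preserves the support, $F_S(X)/F_S(T_C(X)) = \prod_{j=1}^q C_j(X)^{b_j}$ whenever $S \in \Gamma_C$, and weighted arithmetic--geometric mean together with $\sum_j b_j C_j(X) = X(+,+) = 1$ (again lemma~\ref{first equalities}) gives $\prod_j C_j(X)^{b_j} \le 1$. There is essentially no serious obstacle here: the only points needing care are the bookkeeping with the $0^0$ convention — dealt with once and for all by the observation that $X$ and $T_R(X)$ (resp.\ $T_C(X)$) share the same support — and the collapse of $\sum_j S(i,j)$ to $a_i$, which is exactly the defining property of $\Gamma_R$.
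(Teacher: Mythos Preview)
Your proof is correct and follows essentially the same route as the paper: both compute the ratio $F_S(X)/F_S(T_R(X)) = \prod_{i} R_i(X)^{S(i,+)} = \prod_i R_i(X)^{a_i}$ and then bound this weighted geometric mean by $\sum_i a_i R_i(X) = 1$ via the arithmetic--geometric mean inequality. Your version is just slightly more explicit about the support bookkeeping and the $0^0$ convention, which the paper leaves implicit.
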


\begin{proof}
Let $S \in \Gamma_R$. For every $(i,j) \in \supp(X)$, 
$X(i,j)/(T_R(X))(i,j) = R_i(X)$ so the arithmetic-geometric mean inequality 
yields
\begin{eqnarray*}
\frac{F_S(X)}{F_S(T_R(X))}
= \prod_{i,j} R_i(X)^{S_{i,j}}
= \prod_i R_i(X)^{a_i} 
\le \sum_i a_iR_i(X)
= \sum_{i,j} X(i,j)
= 1.
\end{eqnarray*}
The first statement follows. The second statement is proved in the same way. 
\end{proof} 

\begin{coro}~\label{convergence of $F_S(X_n)$}
Assume that $\Gamma(X_0)$ is not empty. Then: 
\begin{enumerate}
\item for every $S \in \Gamma(X_0)$, the sequence $(F_S(X_n))_{n \ge 0}$ 
is non-decreasing and bounded above, so it converges ; 
\item for every $(i,j)$ in the union of the supports $\supp(S)$ over all 
$S \in \Gamma(X_0)$, the sequence $(X_n(i,j))_{n \ge 1}$ is bounded away 
from $0$. 
\end{enumerate}
\end{coro}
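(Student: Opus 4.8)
The two items are essentially immediate consequences of the preceding lemmas, so the proof will be short. For item~1, fix $S \in \Gamma(X_0)$. By definition $\supp(S) \subset \supp(X_0)$, and since all the matrices $X_n$ are diagonally equivalent to $X_0$ they share the same support, so $\supp(S) \subset \supp(X_n)$ for every $n$. Now recall that $X_{2n+1} = T_R(X_{2n})$ and $X_{2n+2} = T_C(X_{2n+1})$. Since $S \in \Gamma = \Gamma_R \cap \Gamma_C$, we may apply the first bullet of Lemma~\ref{effect of $T_R$ and $T_C$} (with $S \in \Gamma_R$) to get $F_S(X_{2n}) \le F_S(T_R(X_{2n})) = F_S(X_{2n+1})$, and the second bullet (with $S \in \Gamma_C$) to get $F_S(X_{2n+1}) \le F_S(T_C(X_{2n+1})) = F_S(X_{2n+2})$. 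Hence $(F_S(X_n))_{n \ge 0}$ is non-decreasing; it is bounded above by $1$ (or, more sharply, by $F_S(S)$ via Lemma~\ref{upper bound}), so it converges. I would also remark that the limit is positive, since $F_S(X_0) > 0$ because $\supp(S) \subset \supp(X_0)$, and the sequence is non-decreasing; this positivity is what feeds item~2.

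For item~2, fix $(i,j)$ with $X_0$-admissible support, i.e.\ $(i,j) \in \supp(S)$ for some $S \in \Gamma(X_0)$. From the definition
$$F_S(X_n) = \prod_{(k,l)} X_n(k,l)^{S(k,l)} \le X_n(i,j)^{S(i,j)} \prod_{(k,l) \ne (i,j)} 1^{S(k,l)} \cdot (\text{correction}),$$
wait --- more carefully: every factor $X_n(k,l)^{S(k,l)}$ with $(k,l)$ in the (finite, fixed) support of $X_0$ is at most $1$ only if $X_n(k,l) \le 1$, which need not hold. So instead I would bound the other factors from above by a fixed constant. Since $X_n \in \Gamma_1$, each entry $X_n(k,l) \le 1$ is in fact true because the entries are non-negative and sum to $1$; hence $X_n(k,l)^{S(k,l)} \le 1$ for every $(k,l)$. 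Therefore $F_S(X_n) \le X_n(i,j)^{S(i,j)}$, which gives $X_n(i,j) \ge F_S(X_n)^{1/S(i,j)} \ge F_S(X_0)^{1/S(i,j)} > 0$ for all $n \ge 1$ (indeed for all $n \ge 0$), using the monotonicity and positivity established in item~1. Thus $(X_n(i,j))_{n \ge 1}$ is bounded away from $0$.

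There is no real obstacle here; the only point requiring a moment's care is noticing that $X_n(k,l) \le 1$ (so that the remaining factors in $F_S(X_n)$ are harmless) and that the non-decreasing sequence $(F_S(X_n))$ starts from the strictly positive value $F_S(X_0)$, which yields the uniform lower bound. I would write this up in a few lines, citing Lemma~\ref{effect of $T_R$ and $T_C$} for the monotonicity and using only elementary manipulations of the product $F_S$ for the rest.
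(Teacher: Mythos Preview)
Your proof is correct and follows essentially the same approach as the paper: monotonicity from Lemma~\ref{effect of $T_R$ and $T_C$}, the upper bound from Lemma~\ref{upper bound}, and then the inequality $X_n(i,j)^{S(i,j)} \ge F_S(X_n) \ge F_S(X_0) > 0$ (valid since $X_n \in \Gamma_1$ for $n \ge 1$, so all entries are at most $1$). The paper states exactly this in two lines; you should trim the exploratory detour in your write-up of item~2, but the mathematics is identical.
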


\begin{proof}
Lemmas~\ref{upper bound} and~\ref{effect of $T_R$ and $T_C$} yield 
the first item. Given $S \in \Gamma(X_0)$ and $(i,j) \in \supp(S)$, 
we get for every $n \ge 0$, 
$X_n(i,j)^{S(i,j)} \ge F_S(X_n) \ge F_S(X_0) > 0$. 
The second item follows. 
\end{proof}

The first item of corollary~\ref{convergence of $F_S(X_n)$}
will yield the first items of theorem~\ref{slow convergence}. 
The second item of corollary~\ref{convergence of $F_S(X_n)$}
is crucial to establish the geometric rate of convergence in theorem
~\ref{fast convergence} and the convergence in theorem
~\ref{slow convergence}. 




\section{Proof of theorem~\ref{fast convergence}}

In this section, we assume that $\Gamma$ contains some matrix having 
the same support as $X_0$, and we establish the convergences with at 
least geometric rate stated in theorem~\ref{fast convergence}. 
The main tools are lemma~\ref{decreasing the length} and the 
second item of corollary~\ref{convergence of $F_S(X_n)$}.
Corollary~\ref{convergence of $F_S(X_n)$}
shows that the non-zero entries of all matrices $X_n$ are bounded below
by some positive real number $\gamma$. Therefore, the non-zero entries of all 
matrices $X_nX_n^\top$ are bounded below by $\gamma^2$. 
These matrix have the same support as $X_0X_0^\top$.

By lemma~\ref{stochastic matrix}, for every $n \ge 1$, 
$R(X_{2n+2}) = P(X_{2n})R(X_{2n})$, where $P(X_{2n})$
is a stochastic matrix given by 
$$P(X_{2n})(i,i') 
= \sum_{j=1}^q \frac{X_{2n+1}(i,j)X_{2n+1}(i',j)}{a_ib_jC_j(X_{2n+1})}.$$
These matrices have also the same support as $X_0X_0^\top$.
Moreover, by lemma~\ref{stochastic matrix} and 
corollary~\ref{nested intervals},
$$P(X_{2n})(i,i') \ge \frac{1}{\overline{a}~\overline{b}~\overline{C}(X_3)}
(X_{2n+1}X_{2n+1}^\top)(i,i'),$$
so the non-zero entries of $P(X_{2n})$ are bounded below by 
$\gamma^2/(\overline{a}~\overline{b}~\overline{C}(X_3))>0$. 

We now define a binary relation on the set $\odc 1,p \fdc$ by
$$i \rc i' \Leftrightarrow X_0X_0^\top(i,i')>0 
\Leftrightarrow \exists j \in \odc 1,q \fdc,~X_0(i,j)X_0(i',j)>0.$$ 
The matrix $X_0X_0^\top$ is symmetric with positive diagonal
(since on each line, $X_0$ has at least a positive entry), so the 
relation $\rc$ is symmetric and reflexive. Call $I_1,\ldots,I_r$ 
the connected components of the graph $G$ associated to $\rc$, and  
$d$ the maximum of their diameters. For each $k$, set 
$J_k = \{j \in \odc 1,q \fdc : \exists i \in I_k : X_0(i,j)>0\}$. 
 
\begin{lemm}\label{partition and support}
The sets $J_1,\ldots,J_k$ form a partition of $\odc 1,q \fdc$ 
and the support of $X_0$ is contained in 
$I_1 \times J_1 \cup \cdots \cup I_r \times J_r$.  
Therefore, the support of $X_0X_0^\top$ is 
contained in $I_1 \times I_1 \cup \cdots \cup I_r \times I_r$, 
so one can get a block-diagonal matrix by permuting suitably 
the lines of $X_0$. 
\end{lemm}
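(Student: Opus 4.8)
The plan is to verify the three assertions of the lemma in turn, each following quickly from the definitions together with the standing hypothesis that $X_0$ has at least one positive entry on every row and every column. First I would show that the sets $J_1,\ldots,J_r$ are pairwise disjoint: if some column index $j$ belonged to $J_k\cap J_{k'}$ with $k\ne k'$, then by definition there would be $i\in I_k$ and $i'\in I_{k'}$ with $X_0(i,j)>0$ and $X_0(i',j)>0$, hence $X_0(i,j)X_0(i',j)>0$, i.e. $i\rc i'$; this would force $i$ and $i'$ to lie in the same connected component of the graph associated to $\rc$, contradicting $k\ne k'$. Next, the $J_k$ cover $\odc 1,q\fdc$: given $j$, the column hypothesis yields some $i$ with $X_0(i,j)>0$, and this $i$ lies in some $I_k$, whence $j\in J_k$. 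Finally each $J_k$ is non-empty, since picking any $i$ in the (non-empty) component $I_k$ and using the row hypothesis to get $j$ with $X_0(i,j)>0$ puts $j$ in $J_k$. So $\{J_1,\ldots,J_r\}$ is indeed a partition of $\odc 1,q\fdc$.

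For the inclusion $\supp(X_0)\subset (I_1\times J_1)\cup\cdots\cup(I_r\times J_r)$: if $X_0(i,j)>0$, then $i$ belongs to a unique component $I_k$, and then $j\in J_k$ directly from the definition of $J_k$, so $(i,j)\in I_k\times J_k$. For the support of $X_0X_0^\top$: if $(X_0X_0^\top)(i,i')>0$, then $\sum_j X_0(i,j)X_0(i',j)>0$, so at least one term is positive, which gives $i\rc i'$ and hence $i,i'$ in the same $I_k$; therefore $\supp(X_0X_0^\top)\subset (I_1\times I_1)\cup\cdots\cup(I_r\times I_r)$. Relabelling $\odc 1,p\fdc$ so that the indices of $I_1$ come first, then those of $I_2$, and so on — equivalently, applying the corresponding permutation $P$ to the rows of $X_0$, which conjugates $X_0X_0^\top$ into $P X_0 X_0^\top P^\top$ and thus permutes its rows and columns symmetrically — turns $X_0X_0^\top$ into a block-diagonal matrix with one block per component $I_k$.

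I do not expect any real obstacle here; the argument is essentially bookkeeping from the definitions. The only points that require a little care are the use of the ``positive entry on each column'' hypothesis to obtain the covering property (without it the $J_k$ need not exhaust $\odc 1,q\fdc$, and one would only get a sub-partition), and the observation that the permutation rendering $X_0X_0^\top$ block-diagonal acts on its rows and columns simultaneously, which is exactly why a permutation of the rows of $X_0$ alone suffices.
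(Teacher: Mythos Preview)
Your proof is correct and follows essentially the same approach as the paper's own argument: both use the row and column positivity hypotheses together with the definition of the relation $\rc$ to establish that the $J_k$ partition $\odc 1,q\fdc$, and then derive the support inclusions immediately from the definitions. The only difference is that you spell out each step (disjointness, covering, non-emptiness, the two support inclusions, and the permutation remark) separately and in more detail, whereas the paper treats disjointness and covering in a single pass and leaves the remaining statements implicit.
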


\begin{proof}
By assumption, the sum of the entries of $X_0$ on any row or any 
column is positive. 

Given $k \in \odc 1,r \fdc$ and $i \in I_k$, there exists 
$j \in \odc 1,q \fdc$ such that $X_0(i,j)>0$, so $J_k$ is not empty.  

Fix now $j \in [1,q]$. There exists $i \in \odc 1,p \fdc$ 
such that $X_0(i,j)>0$. Such an $i$ belongs to some connected 
component $I_k$, and $j$ belongs to the corresponding $J_k$.   
If $j$ also belongs to $J_{k'}$, then $X_0(i',j)>0$ for some 
$i' \in I_{k'}$, so $X_0X_0^\top(i,i') \ge X_0(i,j)X_0(i',j) > 0$, 
hence $i$ and $i'$ belong to the connected component of $G$, 
so $k'=k$.

The other statements follow. 
\end{proof}

\begin{lemm}
For every $n \ge 1$, set $P_{2n} = P(X_{2n})$ and 
$M_n = P_{2n+2d-2}\cdots P_{2n+2}P_{2n}$. 
Call $c$ the infimum of all positive entries of all matrices 
$P_{2n}$. Then $c>0$, and for every $n \ge 1$, 
$i \in I_k$ and $i' \in I_{k'}$. 
\begin{eqnarray*}
M_n(i,i') \ge c^d & & \text{ if } k=k'.\\
M_n(i,i') = 0 & & \text{ if } k \ne k'.
\end{eqnarray*}
\end{lemm}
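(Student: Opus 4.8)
The plan is to unwind the definitions and reduce everything to two elementary facts: that each $P_{2n}$ is stochastic with positive diagonal, and that the support of each $P_{2n}$ equals the support of $X_0X_0^\top$, which by lemma~\ref{partition and support} is block-diagonal with blocks indexed by $I_1,\ldots,I_r$.

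First I would record that $c>0$. The quantities $P_{2n}(i,i')$ are nonzero exactly when $(X_{2n+1}X_{2n+1}^\top)(i,i')>0$, i.e. exactly when $(i,i')\in\supp(X_0X_0^\top)$, since all the matrices $X_n$ share the support of $X_0$. For such $(i,i')$, the displayed lower bound $P(X_{2n})(i,i')\ge \overline a^{-1}\overline b^{-1}\overline C(X_3)^{-1}(X_{2n+1}X_{2n+1}^\top)(i,i')$ from the previous paragraph, combined with the fact (corollary~\ref{convergence of $F_S(X_n)$}) that the nonzero entries of all $X_n$, hence of all $X_{2n+1}X_{2n+1}^\top$, are bounded below by a fixed positive constant, gives a uniform positive lower bound. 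So $c>0$, and moreover every positive entry of every $P_{2n}$ is $\ge c$.

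Next I would prove the two claimed properties of $M_n=P_{2n+2d-2}\cdots P_{2n+2}P_{2n}$ by a support/path argument. Because each $P_{2m}$ has the block-diagonal support pattern of $X_0X_0^\top$ (same support for all $m$), a product of $d$ of them has support contained in the union of the blocks $I_k\times I_k$; in particular $M_n(i,i')=0$ whenever $i\in I_k$, $i'\in I_{k'}$ with $k\ne k'$. For the case $k=k'$: $M_n(i,i')$ is a sum over length-$d$ walks $i=i_0,i_1,\ldots,i_d=i'$ of the product $\prod_{\ell=0}^{d-1}P_{2n+2\ell}(i_\ell,i_{\ell+1})$, and this sum is bounded below by the contribution of any single walk all of whose steps are positive entries of the respective $P_{2m}$'s. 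Such a walk exists: since $i$ and $i'$ lie in the same connected component $I_k$ of the graph $G$ and $d$ is at least the diameter of every component, there is a walk from $i$ to $i'$ in $G$ of length exactly $d$ (take a shortest path, of length $\le d$, then pad it out using the loops $i\rc i$, which are available because the diagonal of $X_0X_0^\top$ is positive and hence every $P_{2m}(i,i)>0$). Every edge traversed corresponds to a positive entry of the relevant $P_{2m}$, each $\ge c$, so that walk contributes at least $c^d$, giving $M_n(i,i')\ge c^d$.

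The only mild subtlety — the step I would be most careful about — is the padding argument: one must check that one can always realize the walk with exactly $d$ edges rather than at most $d$. This is where reflexivity of $\rc$ (equivalently, the strict positivity of the diagonal of $X_0X_0^\top$, noted just before lemma~\ref{partition and support}) is essential, since it lets us insert self-loops at no cost to the support and at a multiplicative cost of at least $c$ per loop, which is already absorbed into the $c^d$ bound. Everything else is bookkeeping with stochasticity and the uniform lower bound $c$.
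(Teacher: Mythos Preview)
Your proof is correct and follows essentially the same route as the paper: expand $M_n(i,i')$ as a sum over length-$d$ walks in the graph $G$, observe that no such walk exists when $k\ne k'$ by the block structure of lemma~\ref{partition and support}, and when $k=k'$ take a shortest $G$-path from $i$ to $i'$ (length $\le d$) padded with self-loops to length exactly $d$, giving a single term $\ge c^d$. One cosmetic point: in your walk expansion $\prod_{\ell=0}^{d-1}P_{2n+2\ell}(i_\ell,i_{\ell+1})$ the factors appear in the reverse order compared to $M_n=P_{2n+2d-2}\cdots P_{2n}$, but since all $P_{2m}$ share the symmetric support $\supp(X_0X_0^\top)$ and the bound $c$ applies to every positive entry, this does not affect the argument.
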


\begin{proof}
The positivity of $c$ has already be proved at the beginning of 
the present section. Moreover,
$$M_n(i,i') = \sum_{1 \le i_1,\ldots,i_{d-1} \le p}P_{2n+2d-2}(i,i_1)P_{2n+2d-4}(i_1,i_2)
\cdots P_{2n+2}(i_{d-2},i_{d-1})P_{2n}(i_{d-1},i').$$

If $k \ne k'$, all these products are $0$, since no path can connect 
$i$ and $i'$ in the graph $G$.  

If $k=k'$, 
one can find a path $i = i_0,\ldots,i_\ell=i'$ in the graph $G$ with length 
$\ell \le d$. Setting $i_{\ell+1} = \ldots = i_d$ if $\ell < d$, 
we get 
$$P_{2n+2d-2}(i,i_1)P_{2n+2d-4}(i_1,i_2)
\cdots P_{2n+2}(i_{d-2},i_{d-1})P_{2n}(i_{d-1},i') \ge c^d.$$
The result follows.  
\end{proof}

Keep the notations of the last lemma. Then for every $n \ge 1$, 
$R(X_{2n+2d}) = M_nR(X_{2n})$. For each $k \in \odc 1,r \fdc$, 
lemma~\ref{decreasing the length} applied to the submatrix 
$(M_n(i,i'))_{i,i' \in I_k}$ and the vector 
$L_{I_k}(X_{2n}) = (R_i(X_{2n}))_{i \in I_k}$ yields 
$$\diam(L_{I_k}(X_{2n+2d})) \le (1-c^d) \diam(L_{I_k}(X_{2n})).$$
But lemma~\ref{decreasing the length} applied to the submatrix 
$(P(X_{2n})(i,i'))_{i,i' \in I_k}$
shows that the intervals 
$$\Big[\min_{i \in I_k} R_i(X_{2n}),\max_{i \in I_k} R_i(X_{2n})\Big]$$ 
indexed by $n \ge 1$ form a non-increasing sequence. 
Therefore, each sequence $(R_i(X_{2n}))$ tends to a limit
which does not depend on $i \in I_k$, and the speed of convergence 
it at least geometric. 

Call $\lambda_k$ this limit. By lemma~\ref{partition and support}, 
we have for every $n \ge 1$, 
$$\sum_{i \in I_k} a_iR_i(X_{2n})
= \sum_{(i,j) \in I_k \times J_k} X_{2n}(i,j) 
= \sum_{j \in J_k} X_{2n}(+,j)
= \sum_{j \in J_k} b_j$$
Passing to the limit yields
$$\lambda_k \sum_{i \in I_k} a_i = \sum_{j \in J_k} b_j,$$
whereas the assumption that $\Gamma$ contains some matrix $S$ 
having the same support as $X_0$ yields
$$\sum_{i \in I_k} a_i
= \sum_{(i,j) \in I_k \times J_k} S(i,j) 
= \sum_{j \in J_k} b_j.$$
Thus $\lambda_k=1$. 

We have proved that each sequence $(R_i(X_{2n}))_{n \ge 0}$ tends to $1$ 
with at least geometric rate. The same arguments work for 
the sequences $(C_j(X_{2n+1}))_{n \ge 0}$. Therefore, each infinite product 
$R_i(X_0)R_i(X_2)\cdots$ or $C_j(X_1)C_j(X_3)\cdots$ 
converges at an at least geometric rate. 
The convergence of the sequence $(X_n)_{n \ge 0}$
with at a least geometric rate follows. 

Moreover, call $\alpha_i$ and $\beta_j$ the inverses of the infinite products 
$R_i(X_0)R_i(X_2)\cdots$ and $C_j(X_1)C_j(X_3)\cdots$ and $X_\infty$ the limit  
of $(X_n)_{n \ge 0}$. Then $X_\infty(i,j) = \alpha_i\beta_jX_0(i,j)$, so $X_\infty$
belongs to the set $\Delta_pX_0\Delta_q$. 
As noted in the introduction, we have also $X_\infty \in \Gamma$. 
It remains to prove that $X_\infty$ is the only matrix in 
$\Gamma \cap \Delta_pX_0\Delta_q$ and the only matrix which achieves 
the least upper bound of $D(Y||X_0)$ over all $Y \in \Gamma(X_0)$.

Let $E_{X_0}$ be the vector space of all matrices in $\mc_{p,q}(\rrf)$ 
which are null on $\supp(X_0)^c$ (which can be identified canonically with 
$\rrf^{\supp(X_0)}$), and $E_{X_0}^+$ be the convex subset of all non-negative 
matrices in $E_{X_0}$. The subset $E_{X_0}^{+*}$ of all matrices in 
$E_{X_0}$ which are positive on $\supp(X_0)^c$, is open in $E_{X_0}$, 
dense in $E_{X_0}^+$ and contains $X_\infty$. 
Consider the map $f_{X_0}$ from $E_{X_0}^+$ to $\rrf$ defined by 
$$f_{X_0}(Y) = \sum_{(i,j) \in \supp(X_0)} Y(i,j) \ln \frac{Y(i,j)}{X_0(i,j)},$$
with the convention $t \ln t = 0$. This map is strictly convex 
since the map $t \mapsto t \ln t$ from $\rrf_+$ to $\rrf$ is. 
Its differential at any point $Y \in E_{X_0}^{+*}$ is given by
$$\d f_{X_0}(Y)(H) 
= \sum_{(i,j) \in \supp(X_0)} \Big( \ln \frac{Y(i,j)}{X_0(i,j)} + 1 \Big) H(i,j).$$
Now, if $Y_0$ is any matrix in $\Gamma \cap \Delta_pX_0\Delta_q$ 
(including the matrix $X_\infty$), the quantities 
$\ln (Y_0(i,j)/X_0(i,j))$ can be written $\lambda_i+\mu_j$. 
Thus for every matrix $H \in E(X_0)$ with null row-sums and column-sums, 
$\d f_{X_0}(Y_0)(H) = 0$, hence the restriction 
of $f_{X_0}$ to $\Gamma(X_0)$ has a strict global minumum at $Y_0$. 
The proof is complete.     

\paragraph{The case of positive matrices.}
The proof of the convergence at an at least geometric rate 
can be notably simplified when $X_0$ has only positive entries. 
In this case, Fienberg~\cite{Fienberg} used geometric arguments 
to prove the convergence of the iterated proportional fitting procedure 
at an at least geometric rate. We sketch another proof using the 
observation made by Fienberg that the ratios 
$$\frac{X_n(i,j)X_n(i',j')}{X_n(i,j')X_n(i',j)}$$ 
are independent of $n$, since they are preserved by the transformations 
$T_R$ and $T_C$. Call $\kappa$ the least of these positive constants. 
Using corollary~\ref{nested intervals}, one checks that the 
average of the entries of $X_n$ on each row or column remains 
bounded below by some constant $\gamma>0$. 
Thus for every location $(i,j)$ and $n \ge 1$, one can find two indexes 
$i' \in \odc 1,p \fdc$ and $j' \in \odc 1,q \fdc$ such that 
$X_n(i',j) \ge \gamma$ and $X_n(i,j') \ge \gamma$, so 
$$X_n(i,j) \ge X_n(i,j)X_n(i',j') \ge \kappa 
X_n(i,j')X_n(i',j) \ge \kappa\gamma^2.$$ 
This shows that the entries of the matrices $X_n$ remain bounded 
away from $0$, so the ratios $X_{n}(i,j)/b_j$ and $X_{n}(i,j)/a_i$ 
are bounded below by some constant $c>0$ independent of $n \ge 1$, 
$i$ and $j$. Set 
$$\rho_n = \frac{\overline{R}(X_n)}{\underline{R}(X_n)} 
\text{ if $n$ is even, }
\rho_n = \frac{\overline{C}(X_n)}{\underline{C}(X_n)} 
\text{ if $n$ is odd}.$$
For every $n \ge 1$, the equalities  
$$C_j(X_{2n+1}) = \sum_{i=1}^p \frac{X_{2n}(i,j)}{b_j} \frac{1}{R_i(X_{2n+1})}$$
and lemma~\ref{decreasing the length} yields 
$$\overline{R}(X_{2n})^{-1} 
\le (1-c)\overline{R}(X_{2n})^{-1} + c\underline{R}(X_{2n})^{-1}
\le C_j(X_{2n+1}) \le 
(1-c)\underline{R}(X_{2n})^{-1} + c\overline{R}(X_{2n})^{-1}.$$
Thus, 
\begin{eqnarray*}
\rho_{2n+1}-1 
&=& \frac{\overline{C}(X_{2n+1})-\underline{C}(X_{2n+1})}
{\underline{C}(X^{(2n+1)})} \\
&\le& \frac{(1-2c)(\underline{R}(X_{2n})^{-1} - \overline{R}(X_{2n})^{-1})}
{\overline{R}(X_{2n})^{-1}} 
= (1-2c)(\rho_{2n}-1). 
\end{eqnarray*}
We prove the inequality $\rho_{2n}-1 \le (1-2c)(\rho_{2n-1}-1)$ in the same way. 
Hence $\rho_n \to 1$ at an at least geometric rate. The result follows 
by corollary~\ref{nested intervals}. 

\section{Proof of theorem~\ref{slow convergence}}

We now assume that $\Gamma$ contains some matrix with support 
included in $\supp(X_0)$. 

\subsection{Asymptotic behavior of the sequences $(R(X_{n}))$ and $(C(X_{n}))$.}

The first item of corollary~\ref{convergence of $F_S(X_n)$}
yields the convergence of the infinite product 
$$\prod_i R_i(X_0)^{a_i} \times \prod_j C_j(X_1)^{b_j} \times 
\prod_i R_i(X_2)^{a_i} \times \prod_j C_j(X_3)^{b_j} \times \cdots.$$
Set $g(t) = t - 1 - \ln t$ for every $t>0$. Using the equalities 
$$\forall n \ge 1,~\sum_{i=1}^p a_i R_i(X_{2n}) 
= \sum_{j=1}^q b_j C_j(X_{2n-1}) = 1,$$
we derive the convergence of the series 
$$\sum_i a_ig(R_i(X_0)) + \sum_j b_jg(C_j(X_1)) 
+ \sum_i a_ig(R_i(X_2)) + \sum_j b_jg(C_j(X_3)) + \cdots.$$
But $g$ is null at $1$, positive everywhere else, and tends 
to infinity at $0+$ and at $+\infty$. By positivity of the 
$a_i$ and $b_j$, we get the convergence of all series 
$$\sum_{n \ge 0} g(R_i(X_{2n})) \text{ and } \sum_{n \ge 0} g(C_j(X_{2n+1}))$$
and the convergence of all sequences $(R_i(X_{2n}))_{n \ge 0}$ and 
to $(C_j(X_{2n+1}))_{n \ge 0}$ towards $1$. 
But $g(t) \sim (t-1)^2/2$ as $t \to 1$, so the series 
$\sum_n (R_i(X_{2n})-1)^2$ and $\sum_{n \ge 0} (C_j(X_{2n+1})-1)^2$
converge. 

We now use a quantity introduced by Bregman~\cite{Bregman} and 
called $L_1$-error by Pukelsheim \cite{Pukelsheim}. 
For every $X \in \Gamma_1$, set
\begin{eqnarray*}
e(X) 
&=& \sum_{i=1}^p\Big|\sum_{j=1}^q X(i,j)-a_i \Big| 
+ \sum_{j=1}^q\Big|\sum_{i=1}^p X(i,j)-b_j \Big| \\
&=& \sum_{i=1}^p a_i|R_i(X)-1| + \sum_{j=1}^q b_j|C_j(X)-1|.
\end{eqnarray*}
The convexity of the square function yields 
$$\frac{e(X)^2}{2} \le 
\sum_{i=1}^p a_i(R_i(X)-1)^2 + \sum_{j=1}^q b_j(C_j(X)-1)^2.$$
Thus the series $\sum_n e(X_n)^2$ converges. But the sequence 
$(e(X_n))_{n \ge 1}$ is non-increasing (the proof of this fact is 
recalled below). Therefore, for every $n \ge 1$, 
$$0 \le \frac{n}{2} e(X_n)^2 \le \sum_{n/2 \le k \le n} e(X_k)^2.$$
Convergences $ne(X_n)^2 \to 0$, $\sqrt{n}(R_i(X_{n})-1) \to 0$ and 
$\sqrt{n}(C_j(X_{n})-1) \to 0$ follow.

To check the monotonicity of $(e(X_n))_{n \ge 1}$, note that 
$T_R(X) \in \Gamma_R$ for every $X \in \Gamma_C$, so
\begin{eqnarray*}
e(T_R(X)) 
&=& \sum_{j=1}^q\Big|\sum_{i=1}^p X(i,j)~R_i(X)^{-1}-b_j \Big| \\
&=& \sum_{j=1}^q\Big|\sum_{i=1}^p X(i,j)~(R_i(X)^{-1}-1) \Big| \\
&\le& \sum_{i=1}^p \sum_{j=1}^q X(i,j)~|R_i(X)^{-1}-1| \\
&=& \sum_{i=1}^p a_iR_i(X)~|R_i(X)^{-1}-1| \\
&=& e(X).
\end{eqnarray*}
In the same way, $e(T_C(Y)) \le e(Y)$ for every $Y \in \Gamma_R$.

\subsection{Convergence and limit of $(X_n)$}\label{convergence and limit}

Since $\Gamma(X_0)$ is not empty, we can fix a matrix 
$S_0 \in \Gamma(X_0)$ whose support is maximum, like in 
theorem~\ref{criteria}, critical case, item~(a).

Let $L$ be a limit point of the sequence $(X_n)_{n \ge 0}$, 
so $L$ is the limit of some subsequence $(X_{\varphi(n)})_{n \ge 0}$. 
As noted in the introduction, $\supp(L) \subset \supp(X_0)$. 
But for every $i \in \odc 1,p \fdc$ and $j \in \odc 1,q \fdc$, 
$R_i(L) = \lim R_i(X_{\varphi(n)}) = 1$ and 
$C_j(L) = \lim C_j(X_{\varphi(n)}) = 1$. 
Hence $L \in \Gamma(X_0)$.
Corollary~\ref{convergence of $F_S(X_n)$} yields the inclusion 
$\supp(S_0) \subset  \supp(L)$ hence for every $S \in \Gamma(X_0)$, 
$\supp(S) \subset \supp(S_0) \subset \supp(L) \subset \supp(X_0)$, 
so the quantities $F_S(X_0)$ and $F_S(L)$ are positive. 

By lemma~\ref{effect of $T_R$ and $T_C$}, the ratios 
$F_S(X_{\varphi(n)})/F_S(X_0)$ do not depend on $S \in \Gamma(X_0)$, 
so by continuity of $F_S$, the ratio $F_S(L)/F_S(X_0)$ 
does not depend on $S \in \Gamma(X_0)$. 
But by lemma~\ref{upper bound}, 
$$\ln\frac{F_S(L)}{F_S(X_0)} 
= \ln\frac{F_S(S)}{F_S(X_0)} - \ln\frac{F_S(S)}{F_S(L)} 
= D(S||X_0)-D(S||L),$$
and $D(S||L) \ge 0$ with equality if and only if $S=L$. 
Therefore, $L$ is the only element achieving 
the greatest lower bound of $D(S||X_0)$ over all $S \in \Gamma(X_0)$.
$$L = \argmin_{S \in \Gamma(X_0)} D(S||X_0).$$
We have proved the unicity of the limit point of the sequence 
$(X_n)_{n \ge 0}$. By compactness of $\Gamma(X_0)$, the convergence follows. 

\begin{rema}
Actually, one has $\supp(S_0) = \supp(L)$. 
Indeed, theorem~\ref{slow convergence} shows that for every 
$(i,j) \in \supp(X_0) \setminus \supp(S_0)$, $X_n(i,j) \to 0$ 
as $n \to +\infty$. This fact could be retrived by using 
the same arguments as in the proof of theorem~\ref{criteria}
to show that on the set $\Gamma(X_0)$, the linear form 
$X \mapsto X(i_0,j_0)$ coincides with some linear combination of the 
affine forms $X \mapsto R_i(X)-1$, $i \in \odc 1,p \fdc$, and 
$X \mapsto C_j(X)-1$, $j \in \odc 1,q \fdc$. 
\end{rema}

\section{Proof of theorems~\ref{divergence},
~\ref{determining the partitions} and 
~\ref{modification of the initial matrix}} 

We recall that neither proof below uses the assumption that 
$\Gamma(X_0)$ is empty.

\subsection{Proof of theorem~\ref{divergence}} 

\paragraph{Convergence of the sequences $(R(X_{2n}))$ and
$(C(X_{2n+1}))$.}
By lemma~\ref{stochastic matrix} and corollary~\ref{nested intervals}, 
we have for every $n \ge 1$, $R(X_{2n+2}) = P(X_{2n})R(X_{2n})$, where $P(X_{2n})$
is a stochastic matrix 
such that for every $i$ and $k$ in $\odc 1,p \fdc$, 
$$P(X_{2n})(i,i) 
\ge \frac{\underline{a}}{\overline{b}~\overline{C}(X_{2n+1})q}
\ge \frac{\underline{a}~\underline{R}(X_2)}{\overline{b}q}$$
and
$$P(X_{2n})(k,i) \le (\overline{a}/\underline{a}) P(X_{2n})(i,k).$$
The sequence $(P(X_{2n}))_{n \ge 0}$ satisfies the assumption of 
corollary~\ref{finite variation} and 
theorem~\ref{infinite product of stochastic matrices}, so any one of 
these two results ensures the convergence of the sequence 
$(R(X_{2n}))_{n \ge 0}$. By corollary~\ref{nested intervals}, 
the entries of these vectors stay in the interval 
$[\underline{R}(X_2),\overline{R}(X_2)]$, so the limit of each 
entry is positive. 
The same arguments show that the sequence $(C(X_{2n+1}))_{n \ge 0}$ also 
converges to some vector with positive entries. 

\paragraph{Relations between the components of the limits, and block structure.}
Denote by $\lambda_1<\ldots<\lambda_r$ the different values of the limits 
of the sequences $(R_i(X_{2n}))_{n \ge 0}$, and by $\mu_1>\ldots>\mu_s$ 
the different values of the limits of the sequences $(C_j(X_{2n+1}))_{n \ge 0}$.
The values of these limits will be precised later.
Consider the sets
$$I_k = \{i \in \odc 1,p \fdc : \lim R_i(X_{2n}) = \lambda_k\}
\text{ for } k \in \odc 1,r \fdc,$$ 
$$J_l = \{j \in \odc 1,q \fdc : \lim C_j(X_{2n+1}) = \mu_l\}
\text{ for } l \in \odc 1,s \fdc.$$

When $(i,j) \in I_k \times J_l$, the sequence 
$(R_i(X_{2n})C_j(X_{2n+1}))_{n \ge 0}$ converges to $\lambda_k\mu_l$. 
If $\lambda_k\mu_l>1$, this entails the convergence to $0$ of 
the sequence $(X_n(i,j))_{n \ge 0}$ with a geometric rate; 
and if $\lambda_k\mu_l<1$, 
this entails the nullity of all $X_n(i,j)$ (otherwise the sequence 
$(X_n(i,j))_{n \ge 0}$ would go to infinity). 
But for all $n \ge 1$, $R_i(X_{2n})=1$ and $C_j(X_{2n+1}) = 1$, so 
at least one entry of the matrices $X_n$ on each line or column does 
not converge to $0$. This forces the equalities $s=r$ and 
$\mu_k = \lambda_k^{-1}$ for every $k \in \odc 1,r \fdc$.

\paragraph{Convergence of the sequences $(X_{2n})$ and
$(X_{2n+1})$.}
Let $L$ be any limit point of the sequence $(X_{2n})_{n \ge 0}$, 
so $L$ is the limit of some subsequence $(X_{2\varphi(n)})_{n \ge 0}$. 
By definition of $a'$, $R_i(L) = \lim R_i(X_{2\varphi(n)}) = a'_i/a_i$ 
for every $i \in \odc 1,p \fdc$. Moreover, $\supp(L) \subset \supp(X_0)$, 
so $L$ belongs to $\Gamma(a',b,X_0)$. 

Like in subsection~\ref{convergence and limit}, we check that the quantity 
$$D(S||X_0)-D(S||L) = \ln (F_S(L)/F_S(X_0))$$ 
does not depend on $S \in \Gamma(a',b,X_0)$, so $L$ is the unique matrix 
achieving the greatest lower bound of $D(S||X_0)$ over all 
$S \in \Gamma(a',b,X_0)$. The convergence of $(X_{2n})_{n \ge 0}$ follows 
by compactness of $\Gamma(a',b,X_0)$. 

By lemma~\ref{effect of $T_R$ and $T_C$}, the ratios 
$F_S(X_{2\varphi(n)})/F_S(X_0)$ do not depend on $S \in \Gamma(X_0)$, 
so by continuity of $F_S$, the ratio $F_S(L)/F_S(X_0)$ 
The same arguments show that the sequence $(X_{2n+1})_{n \ge 0}$ 
converges to the unique matrix achieving the greatest lower bound of 
$D(S||X_0)$ over all $S \in \Gamma(a,b',X_0)$.

\paragraph{Formula for $\lambda_k$.}
We know that the sequence $(X_n(i,j))_{n \ge 0}$ converges to $0$ 
whenever $i \in I_k$ and $j \in J_l$ with $k \ne l$. Thus the 
support of $L = \lim X_{2n}$ is contained in 
$I_1 \times J_1 \cup \ldots \cup I_r \times J_r$. 
But $L$ belongs to $\Gamma(a',b,X_0)$, so for every 
$k \in \odc 1,r \fdc$ 
$$\lambda_k a(I_k) = \sum_{i \in I_k} a'_i 
= \sum_{(i,j) \in I_k \times J_k} L(i,j) 
= \sum_{j \in J_k} b_j = b(J_k).$$

\paragraph{Properties of matrices in $\Gamma(a',b,X_0)$ and $\Gamma(a',b,X_0)$.}
Let $S \in \Gamma(a,b',X_0)$. 

Let $k \in \odc 1,r-1 \fdc$, $A_k = I_1 \cup \cdots \cup I_k$ and 
$B_k = J_{k+1} \cup \cdots \cup I_r$. We already know that 
$X_0$ is null on $A_k \times B_k$, so $S$ is also null on this set. 
Moreover, for every $l \in \odc 1,r \fdc$, 
$$a(I_l) = \lambda_l^{-1} b(J_l) = \sum_{j \in J_l} \lambda_l^{-1}b_j 
= \sum_{j \in J_l} b'_j = b'(J_l).$$
Summation over all $l \in \odc 1,k \fdc$ yields $a(A_k) = b'(B_k^c)$. 
Hence by theorem~\ref{criteria} (critical case), 
$S$ is null on the set $A_k^c \times B_k^c 
= (I_{k+1} \cup \cdots \cup I_r) \times (J_1 \cup \cdots \cup J_k)$. 

This shows that the support of $S$ is included in 
$I_1 \times J_1 \cup \cdots \cup I_r \times J_r$. 
This block structure and the equalities $a'_i/a_i = b_j/b'_j = \lambda_k$ 
whenever $(i,j) \in I_k \times J_k$ yield the equality $D_1S=SD_2$. 
This matrix has the same support as $S$.
Moreover, its $i$-th row is $a'_i/a_i$ times the $i$-th row of $S$, 
so its $i$-th row-sum is $a'_i/a_i \times a_i = a'_i$; 
in the same way its $j$-th column is $b_j/b'_j$ times the $j$-th column of $S$, 
so its $j$-th column sum is $b_j/b'_j \times b'_j = b_j$. 
As symmetric conclusions hold for every matrix in $\Gamma(a',b,X_0)$, 
the proof is complete. 

\subsection{Proof of theorem~\ref{determining the partitions}} 

Let $k \in \odc 1,r \fdc$, 
$P = \odc 1,p \fdc \setminus (I_1 \cup \ldots \cup I_{k-1})$, 
$Q = \odc 1,q \fdc \setminus (J_1 \cup \ldots \cup J_{k-1})$.
Fix $S \in \Gamma(a',b,X_0)$
(we know by therorem~\ref{divergence} that this set is not empty). 

If $k=r$, then $P = I_r$ and $Q = J_r$. As $a'_i = a_ib(J_r)/a(I_r)$
for every $i \in I_r$,  we have $a'(I_r)=b(J_r)$ and 
$a'_i/a'(I_r)= a_ib(J_r)/a(I_r)$ for every $i \in I_r$. Therefore, 
the matrix $(S(i,j)/a'(I_r))$ is a solution of the restricted 
problem associated to the marginals 
$a(\cdot|P) = (a_i/a(I_r))_{i \in P}$, $b(\cdot|Q) = (b_j/b(Q))_{j \in Q}$
and to the initial condition
$(X_0(i,j))_{(i,j) \in P \times Q}$. 

If $k<r$, then $P = I_k \cup \ldots \cup I_r$ and 
$Q = J_k \cup \ldots \cup J_r$. 
Let $A_k = I_k$ and $B_k = J_{k+1} \cup \ldots \cup J_r$.
By theorem~\ref{divergence}, the matrix $X_0$ is null on product 
$A_k \times B_k$. Moreover, the inequalities 
$\lambda_1 < \ldots < \lambda_r$ and $a(I_l)>0$ for every 
$l \in \odc 1,r \fdc$ yield
$$b(Q) = \sum_{l=k}^r b(J_l) = \sum_{l=k}^r \lambda_l a(I_l) > 
\lambda_k \sum_{l=k}^r a(I_l) = \lambda_k a(P),$$
so
$$\frac{a(A_k|P)}{b(Q \setminus B_k|Q)} 
= \frac{a(I_k)/a(P)}{b(J_k)/b(Q)}
= \lambda_k^{-1} \frac{b(Q)}{a(P)} > 1.$$ 
Hence $A_k \times B_k$ is a cause of incompatibility
of the restricted problem associated to the marginals 
$a(\cdot|P) = (a_i/a(P))_{i \in P}$, $b(\cdot|Q) = (b_j/b(Q))_{j \in Q}$
and to the initial condition
$(X_0(i,j))_{(i,j) \in P \times Q}$. 

Now, assume that $X_0$ is null on some subset $A \times B$ of $P \times Q$. 
Then $S$ is also null on $A \times B$, 
so for every $l \in \odc k,r \fdc$, 
$$\lambda_k a(A \cap I_l) \le \lambda_l a(A \cap I_l) = a'(A \cap I_l) 
= S((A \cap I_l) \times ((Q \setminus B) \cap J_l)) 
\le b((Q \setminus B) \cap J_l).$$
Summing this inequalities over all $l \in \odc k,r \fdc$ yields
$\lambda_k a(A) \le b(Q \setminus B)$, so 
$$\frac{a(A)}{b(Q \setminus B)} \le \lambda_k^{-1} = \frac{a(I_k)}{b(I_k)}
= \frac{a(A_k)}{b(Q \setminus B_k)}.$$ 
Moreover, if equality holds in the last inequality, then 
for every $l \in \odc k,r \fdc$, 
$$\lambda_k a(A \cap I_l) = \lambda_l a(A \cap I_l) = 
b((Q \setminus B) \cap J_l).$$ 
This yields $A \cap I_l = (Q \setminus B) \cap J_l = \emptyset$ 
for every $l \in \odc k+1,r \fdc$, thus $A \subset I_k = A_k$ and 
$Q \setminus B \subset I_k$, namely $B \supset B_k$.
The proof is complete. 

\subsection{Proof of theorem~\ref{modification of the initial matrix}}

The proof relies the next two lemmas, from Pretzel, relying on notion 
of diagonal equivalence. We provide proofs to keep the paper self-contained. 
The first one is different from Pretzel's original proof. 
Recall that two matrices $X$ and $Y$ in $\mc{p,q}(\rrf_+)$
are said to be diagonally equivalent if there exists $D' \in \Delta_p$ and 
$D'' \in \Delta_q$ such that $Y = D'XD''$. In particular, $X$ and $Y$ 
must have the same support to be diagonally equivalent. 

\begin{lemm}\label{diagonal equivalence and marginals}
{\bf (Property 1 of \cite{Pretzel})}
Let $X$ and $Y$ be in $\mc{p,q}(\rrf_+)$. If $X$ and $Y$ are diagonally 
equivalent and have the same marginals then $X=Y$.  
\end{lemm}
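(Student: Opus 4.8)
The plan is to exploit the strict convexity of the map $t \mapsto t\ln t$, as already used in the proof of Theorem~\ref{fast convergence}. Write $Y = D'XD''$ with $D' = \diag(\alpha_1,\ldots,\alpha_p)$ and $D'' = \diag(\beta_1,\ldots,\beta_q)$, so that $Y(i,j) = \alpha_i\beta_j X(i,j)$ on the common support $\Sigma := \supp(X) = \supp(Y)$. The key observation is that for every $(i,j) \in \Sigma$ we have $\ln(Y(i,j)/X(i,j)) = \ln\alpha_i + \ln\beta_j$. Hence the quantity
$$\sum_{(i,j)\in\Sigma} Y(i,j)\,\ln\frac{Y(i,j)}{X(i,j)} = \sum_{i=1}^p \Big(\ln\alpha_i \sum_{j} Y(i,j)\Big) + \sum_{j=1}^q \Big(\ln\beta_j \sum_i Y(i,j)\Big)$$
depends only on $\ln\alpha_i$, $\ln\beta_j$ and the marginals of $Y$. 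By symmetry the same sum with the roles of $X$ and $Y$ reversed (that is, $\sum X(i,j)\ln(X(i,j)/Y(i,j))$) equals $-\sum_i(\ln\alpha_i)X(i,+) - \sum_j(\ln\beta_j)X(+,j)$, which involves only $\ln\alpha_i$, $\ln\beta_j$ and the marginals of $X$.

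First I would add these two expressions. Since $X$ and $Y$ have the same row-sums $r_i$ and column-sums $c_j$, the sum telescopes to $0$: the coefficient of each $\ln\alpha_i$ is $Y(i,+) - X(i,+) = r_i - r_i = 0$, and similarly for each $\ln\beta_j$. Thus
$$\sum_{(i,j)\in\Sigma} Y(i,j)\ln\frac{Y(i,j)}{X(i,j)} + \sum_{(i,j)\in\Sigma} X(i,j)\ln\frac{X(i,j)}{Y(i,j)} = 0.$$
On the other hand, each of the two sums is nonnegative: this is exactly the nonnegativity of $I$-divergence, which follows from the arithmetic-geometric inequality (or equivalently from Lemma~\ref{upper bound}, after normalising $X$ and $Y$ to have total mass $1$ — note that equal marginals forces equal total mass, so no homogeneity issue arises). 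Therefore both sums vanish. The equality case in the arithmetic-geometric inequality (or in Lemma~\ref{upper bound}) then forces $X(i,j) = Y(i,j)$ for every $(i,j)\in\Sigma$, hence $X = Y$.

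The only mildly delicate point is bookkeeping with the supports and the convention $0\ln 0 = 0$: one must check that the index set $\Sigma$ is genuinely common to $X$ and $Y$ (immediate, since $\alpha_i,\beta_j>0$), and that the normalisation to probability measures is legitimate (it is, because equal marginals give equal total sum $\sum_i r_i = \sum_j c_j$, which may be taken to be $1$ after dividing both $X$ and $Y$ by this common value without affecting the conclusion). Everything else is a routine rearrangement, so I do not expect a real obstacle here; the proof is short once the telescoping identity above is written down.
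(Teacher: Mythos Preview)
Your proof is correct and follows essentially the same approach as the paper: both compute $D(Y\|X)$ and $D(X\|Y)$ via the identity $\ln(Y(i,j)/X(i,j))=\ln\alpha_i+\ln\beta_j$, observe that equal marginals make these two nonnegative quantities opposite, and conclude that both vanish so $X=Y$. The only difference is presentational---you add a remark on normalisation and on the common support, which the paper leaves implicit.
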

 
\begin{proof}
By assumption, $Y = D'XD''$ for some $D' \in \Delta_p$ and 
$D'' \in \Delta_q$. Call $\alpha_1,\ldots,\alpha_p$ and 
$\beta_1,\ldots,\beta_q$ the diagonal entries of $D'$ and $D''$. 
For every $(i,j) \in \odc 1,p \fdc \times \in \odc 1,q \fdc$, 
$Y(i,j) = \alpha_i\beta_jX(i,j)$, so 
$$D(Y||X) = \sum_{i,j} Y(i,j) (\ln\alpha_i+\ln\beta_j) 
=  \sum_i Y(i,+)\ln\alpha_i + \sum_j Y(+,j)\ln\beta_j.$$
In the same way, 
$$D(X||Y) = \sum_{i,j} X(i,j) (\ln(\alpha_i^{-1})+\ln(\beta_j^{-1})) 
= - \sum_i X(i,+)\ln\alpha_i - \sum_j X(+,j)\ln\beta_j.$$
Since $X$ and $Y$ have the same marginals, the non-negative 
quantities $D(Y|X)$ and $D(X|Y)$ are opposite, so they are null.
Hence $X=Y$.
\end{proof}

\begin{lemm}\label{diagonal equivalence and limit}
{\bf (Lemma 2 of \cite{Pretzel})}
Let $X$ and $Y$ be in $\mc{p,q}(\rrf_+)$. If $Y$ has the same support 
as $X$ and is the limit of a some sequence $(Y_n)_{n \ge 0}$ of 
matrices which are diagonally equivalent to $X$, then $X$ and $Y$ 
are diagonally equivalent. 
\end{lemm}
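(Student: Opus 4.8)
The plan is to write $Y_n = D'_n X D''_n$ with $D'_n = \diag(\alpha^{(n)}_1,\ldots,\alpha^{(n)}_p) \in \Delta_p$ and $D''_n = \diag(\beta^{(n)}_1,\ldots,\beta^{(n)}_q) \in \Delta_q$, and to extract convergent choices of these diagonal factors. The point to keep in mind is that the factors are not uniquely determined by $Y_n$ and need not converge as they stand: replacing $(D'_n,D''_n)$ by $(tD'_n,t^{-1}D''_n)$ leaves $Y_n$ unchanged, and more generally there is one such rescaling freedom per connected component of the bipartite support graph. I would first record this gauge freedom and use it to pin down a good representative.

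Introduce the bipartite graph $G$ on the vertex set $\odc 1,p \fdc \sqcup \odc 1,q \fdc$ with an edge between $i$ and $j$ whenever $(i,j) \in \supp(X) = \supp(Y)$. For each connected component $C$ of $G$ containing at least one edge, fix a vertex $v_C \in C$ and a spanning tree $T_C$ of $C$. Using the rescaling freedom attached to $C$, I may assume that the diagonal entry attached to $v_C$ (namely $\alpha^{(n)}_{v_C}$ if $v_C$ is a row vertex, or $\beta^{(n)}_{v_C}$ if it is a column vertex) equals $1$ for every $n$. Then I propagate along $T_C$: whenever $\{i,j\}$ is an edge of $T_C$, so that $(i,j) \in \supp(X)$,
$$\alpha^{(n)}_i \beta^{(n)}_j = \frac{Y_n(i,j)}{X(i,j)} \xrightarrow[n\to\infty]{} \frac{Y(i,j)}{X(i,j)} > 0,$$
the positivity of the limit being exactly where the hypothesis $\supp(Y) = \supp(X)$ is used. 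Arguing by induction on the tree-distance to $v_C$, each $\alpha^{(n)}_i$ (for a row vertex $i \in C$) and each $\beta^{(n)}_j$ (for a column vertex $j \in C$) converges to a positive limit, which I call $\alpha_i$, resp. $\beta_j$. For a row or column of $X$ that is an isolated vertex of $G$ (i.e.\ carries no positive entry), the corresponding factor is entirely free; I set it equal to $1$ for all $n$, with limit $1$.

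Finally, set $D' = \diag(\alpha_1,\ldots,\alpha_p) \in \Delta_p$ and $D'' = \diag(\beta_1,\ldots,\beta_q) \in \Delta_q$. For $(i,j) \in \supp(X)$ one has $Y(i,j) = \lim_n \alpha^{(n)}_i \beta^{(n)}_j X(i,j) = \alpha_i \beta_j X(i,j)$, while for $(i,j) \notin \supp(X)$ both $X(i,j)$ and $Y(i,j)$ vanish, so the identity $Y(i,j) = \alpha_i \beta_j X(i,j)$ holds for every $(i,j)$; that is, $Y = D'XD''$, and $X$ and $Y$ are diagonally equivalent. The only real obstacle is the possible non-convergence of the raw diagonal factors, and it is removed by the normalization above, which is legitimate precisely because the product $D'_n X D''_n$ is insensitive to the per-component rescalings.
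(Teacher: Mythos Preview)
Your proof is correct and follows essentially the same approach as the paper: normalize one diagonal factor per connected component of the support graph, then propagate along edges using $\alpha^{(n)}_i\beta^{(n)}_j = Y_n(i,j)/X(i,j)$ to obtain convergence of all factors to positive limits. The only cosmetic difference is that the paper works with the graph on $\odc 1,p\fdc$ having an edge $(i,i')$ whenever $(XX^\top)(i,i')>0$ (then partitioning the columns accordingly), whereas you use the bipartite graph on $\odc 1,p\fdc \sqcup \odc 1,q\fdc$ and a spanning tree; these are equivalent formulations of the same propagation argument.
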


\begin{proof}
For every $n \ge 0$, one can find some positive real numbers 
$\alpha_n(1),\ldots,\alpha_n(p)$ and $\beta_n(1),\ldots,\beta_n(q)$
such that $Y_n(i,j) = \alpha_n(i)\beta_n(j)X(i,j)$ for every 
$(i,j) \in \odc 1,p \fdc \times \in \odc 1,q \fdc$. 
By assumption, the sequence $(\alpha_n(i)\beta_n(j))_{n \ge 0}$ 
converges to a positive number whenever $(i,j) \in \supp(X)$. 

In a way similar to the beginning of the proof of theorem~\ref{divergence}, 
we define a non-oriented graph $G$ on $\odc 1,p \fdc$ as follows:
$(i,i')$ is an edge if and only if there exists some $j \in \odc 1,q \fdc$
such that $X(i,j)X(i',j)>0$. Then the sequence 
$(\alpha_n(i)/\alpha_n(i'))_{n \ge 0}$ converges whenever $i$ and $i'$ 
belong to a same connected component of $G$. 

Call $I_1,\ldots,I_r$ the connected components of the graph $G$. 
For each $k \in \odc 1,r \fdc$ choose $i_k \in I_k$ and set 
$J_k = \{j \in \odc 1,q \fdc : \exists i \in I_k : X(i,j)>0\}$. 
Then the sets $J_1,\ldots,J_k$ form a partition of $\odc 1,q \fdc$ 
and the support of $X$ is contained in 
$I_1 \times J_1 \cup \cdots \cup I_r \times J_r$.  

For every $n \ge 0$, set $\alpha'_n(i) = \alpha_n(i)/\alpha_n(i_k)$ 
whenever $i \in I_k$ and $\beta'_n(j) = \beta_n(j)\alpha_n(i_k)$ 
whenever $j \in J_k$. Then $Y_n(i,j) = \alpha'_n(i)\beta'_n(j)X(i,j)$ 
for every $(i,j) \in \odc 1,p \fdc \times \in \odc 1,q \fdc$. 
Since all sequences $(\alpha'_n(i))_{n \ge 0}$ and $(\beta'_n(j))_{n \ge 0}$ 
converge to a positive limit, we deduce that $X$ and $Y$ are diagonally 
equivalent.
\end{proof}

We now prove theorem~\ref{modification of the initial matrix}.

Set $L_{\rm even} = \lim_n X_{2n}$ and $L_{\rm odd} = \lim_n X_{2n+1}$. 
Letting $n$ go to infinity in the equality $X_{2n+1} = T_R(X_{2n})$ 
yields $L_{\rm odd} = D_1^{-1}L_{\rm even}(i,j)$, where 
$D_1 = \diag(a'_1/a_1,\ldots,a'_p/a_p)$.   
We deduce that $L_{\rm even}$ and $L_{\rm odd}$ 
have the same support $\Sigma$. 

By theorem~\ref{divergence}, $L_{\rm even}$ is the only matrix achieving 
the minimum of $D(Y||X_0)$ over all $Y \in \Gamma(a',b,X_0)$. Thus, 
by theorem~\ref{slow convergence}, $L_{\rm even}$ is also the limit 
of the sequence provided by the IPFP performed with the marginals 
$a'$ and $b$ and the initial matrix $X_0$, so $\Sigma$ is 
the maximum of the supports of all matrices in $\Gamma(a',b,X_0)$. 

By construction, for every $n \ge 0$, one 
can find $D'_n \in \Delta_p$ and $D''_n \in \Delta_q$ such that 
$X_{2n} = D'_nX_0D''_n$. Since $D'_n$ and $D''_n$ are diagonal, 
$(D'_nX'_0D''_n)(i,j) = X_{2n}(i,j)$ whenever $(i,j) \in \Sigma$ 
and $(D'_nX'_0D''_n)(i,j) = 0$ whenever $(i,j) \in \Sigma^c$. 
Hence $\lim_n D'_nX'_0D''_n = L_{\rm even}$ since $\Sigma = \supp(L_{\rm even})$. 
But $X'_0$ and $L_{\rm even}$ have the same support, they
are diagonally equivalent by 
lemma~\ref{diagonal equivalence and limit}. 

Call $(X'_n)_{n \ge 0}$ (respectively $(X''_n)_{n \ge 1}$) the sequence 
provided by the IPFP performed on the marginals $a,b$ (respectively $a',b$)
and the initial matrix $X'_0$. 
Equivalently, one could also start from $X'_0(+,+)^{-1}X'_0$ to 
have an initial matrix in $\Gamma_1$.  

Since $L_{\rm even} \in \Gamma(a',b)$ and $\supp(L_{\rm even})=\supp(X'_0)$, 
theorem~\ref{fast convergence} applies, so the limit 
$L'' = \lim_n X''_n$ exists, $L''$ belongs to $\Gamma(a',b)$ and 
$L''$ is diagonally equivalent to $X'_0$ and therefore to $L_{\rm even}$. 
By lemma~\ref{diagonal equivalence and marginals}, we get $L''=L_{\rm even}$. 
Since all matrices $X'_n$ and $X''_n$ have the same support $\Sigma$, 
contained in $I_1 \times J_1 \cup \cdots \cup I_1 \times J_1$
by theorem~\ref{divergence}, a recursion shows that for every $n \ge 0$, 
$X'_{2n} = X''_{2n}$ and $X'_{2n+1} = D_1^{-1}X''_{2n+1}$. 
Hence $\lim_n X'_{2n} = L'' = L_{\rm even}$.

A similar proof works for $L_{\rm odd}$ and the set $\Gamma(a,b',X_0)$.

\paragraph{Acknowledgement.} 
We thank A.~Coquio, D.~Piau, F.~Pukelsheim and the referee for their 
careful reading and useful remarks. We also thank G.~Geenens who pointed 
our attention on a wrong statement in theorem~\ref{criteria}, 
critical case, item c.

\end{document}